\theoremstyle{definition}
\theoremstyle{remark}
\theoremstyle{plain} 
\newtheorem{theorem}{Theorem}[section]
\newtheorem*{theorem*}{Theorem}
\newtheorem{lemma}[theorem]{Lemma}
\newtheorem{corollary}[theorem]{Corollary}
\newtheorem*{corollary*}{Corollary}
\newtheorem{proposition}[theorem]{Proposition}
\newtheorem*{proposition*}{Proposition}
\newtheorem{definition}[theorem]{Definition}
\newtheorem*{definition*}{Definition}
\newtheorem{assumption}[theorem]{Assumption}
\theoremstyle{definition} 
\newtheorem*{example*}{Example}
\newtheorem{remark}[theorem]{Remark}
\newtheorem*{remark*}{Remark}
\newtheorem*{remarks*}{Remarks}
\newcommand{\deq}{\mathrel{\mathop:}=}
\newcommand{\e}[1]{\mathrm{e}^{#1}}
\newcommand{\R} {\mathbb{R}}
\newcommand{\C} {\mathbb{C}}
\newcommand{\N} {\mathbb{N}}
\newcommand{\Z} {\mathbb{Z}}
\newcommand{\E} {\mathbb{E}}
\newcommand{\T} {\mathbb{T}}
\newcommand{\p} {\mathbb{P}}
\newcommand{\adj}{^*} 
\DeclareMathOperator{\diag}{diag}
\DeclareMathOperator{\Tr}{Tr}
\DeclareMathOperator{\im}{\mathrm{Im}}
\newcommand{\caD}{{\mathcal D}}
\newcommand{\caN}{{\mathcal N}}
\newcommand{\caO}{{\mathcal O}}
\newcommand{\caQ}{{\mathcal Q}}
\newcommand{\opunit}{\text{1}\kern-0.22em\text{l}}
\newcommand{\frc}{{\mathfrak c}}
\newcommand{\frf}{{\mathfrak f}}
\newcommand{\frn}{{\mathfrak n}}
\newcommand{\bsv}{{\boldsymbol v}}
\newcommand{\bsy}{{\boldsymbol y}}
\newcommand{\wt}{\widetilde}
\newcommand{\beq}{ \begin{equation} }
\newcommand{\eeq}{ \end{equation} }
\newcommand{\baq}{ \begin{eqnarray} }
\newcommand{\eaq}{ \end{eqnarray} }
\newcommand{\bet}{ \begin{theorem} }
\newcommand{\eet}{ \end{theorem} }
\newcommand{\lone}{\mathbbm{1}}
\newcommand{\ben}{\begin{arabicenumerate}}
\newcommand{\een}{\end{arabicenumerate}}
\newcommand{\dd}{\mathrm{d}}
\newcommand{\ii}{\mathrm{i}}
\renewcommand{\a}{\mathrm{a}}
\renewcommand{\P}{\mathbb{P}}
\newcommand{\fc}{\textsl{fc}}
\newcommand{\MP}{\textsl{MP}}
\newcommand{\GOE}{\textsc{goe}}
\numberwithin{equation}{section} 
\numberwithin{thm}{section}
\begin{document}
 \begin{minipage}{0.85\textwidth}
 \vspace{2.2cm}
 \end{minipage}
\begin{center}
\Large\bf
Tracy-Widom Distribution for the Largest Eigenvalue of Real Sample Covariance Matrices with General Population
\end{center}

\renewcommand{\thefootnote}{\fnsymbol{footnote}}	
\vspace{1.4cm}
\begin{center}
 \begin{minipage}{0.45\textwidth}
\begin{center}
Ji Oon Lee\footnotemark[1]  \\
\footnotesize { KAIST }\\
{\it jioon.lee@kaist.edu}
\end{center}
\end{minipage}
\begin{minipage}{0.45\textwidth}
 \begin{center}
Kevin Schnelli\footnotemark[2]\\
\footnotesize 
{IST Austria}\\
{\it kevin.schnelli@ist.ac.at}
\end{center}
\end{minipage}
\footnotetext[1]{Partially supported by TJ Park Junior Faculty Fellowship.}
\footnotetext[2]{Supported by ERC Advanced Grant RANMAT No.\ 338804. }

\end{center}

\vspace{1.1cm}

\begin{center}
 \begin{minipage}{0.9\textwidth}
\small
\hspace{10pt} We consider sample covariance matrices of the form $\caQ=(\Sigma^{1/2}X)(\Sigma^{1/2} X)\adj$, where the sample~$X$ is an $M\times N$ random matrix whose entries are real independent random variables with variance~$1/N$ and where~$\Sigma$ is an $M\times M$ positive-definite deterministic matrix. We analyze the asymptotic fluctuations of the largest rescaled eigenvalue of $\caQ$ when both $M$ and $N$ tend to infinity with $N/M\to d\in(0,\infty)$. For a large class of populations $\Sigma$ in the sub-critical regime, we show that the distribution of the largest rescaled eigenvalue of $\caQ$ is given by the type-1 Tracy-Widom distribution under the additional assumptions that (1) either the entries of $X$ are i.i.d.\ Gaussians or (2) that~$\Sigma$ is diagonal and that the entries of $X$ have a subexponential decay. 

 \end{minipage}
\end{center}
 
 \vspace{7mm}
 
\thispagestyle{headings}

\section{Introduction}

Covariance matrices are fundamental objects in multivariate statistics whose study is an integral part of various fields such as signal processing, genomics, financial mathematics, {\it etc.}. Sample covariance matrices are the simplest estimators for population covariance matrices: The population covariance matrix of a mean-zero random variable $\bsy\in\R^M$ is $\Sigma\deq\E \bsy\bsy'$. Given $N$ independent samples $(\bsy_1,\ldots,\bsy_N)$ of~$\bsy$,~$\Sigma$ may be estimated through the sample covariance matrix $\caQ\deq\frac{1}{N}\sum_{i=1}^N \bsy_i{\bsy_i}'$. Indeed, since $\E \caQ=\Sigma$, $\caQ$ converges, for fixed $M$, almost surely to $\Sigma$ as $N$ tends to infinity. However, in many modern applications the population size~$M$ may be as large or even large than $N$ and, hence, one may take~$M$ and~$N$ simultaneously to infinity in an asymptotic analysis. In this setting,~$\Sigma$ cannot be estimated trough~$\caQ$ due to the high dimensionality. Yet, some properties of $\Sigma$ may be inferred 
from spectral statistics of~$\caQ$, {\it e.g.,} the limiting behavior of the largest eigenvalues of $\caQ$ is frequently used in hypothesis testing for the structure of~$\Sigma$.

In this paper we investigate the limiting behavior of the largest eigenvalues of the form
\begin{align}\label{le Q intro}
 \caQ= (\Sigma^{1/2}X)(\Sigma^{1/2}X)\adj\,,
\end{align}
where the sample or data matrix, $X$, is an $M \times N$ matrix whose entries are a collection of independent real or complex random variables of variance~$1/N$ and where the general population covariance, $\Sigma$, is an $M \times M$ real positive-definite deterministic matrix. We are interested in the high-dimensional case, where $\widehat d\deq N/M \to d \in (0, \infty)$, as $N \to \infty$. We further mainly focus on the real setting, where $X$ is a real data matrix, since, mathematically, the complex case is easier to deal with. Also, the real case is of primary interest in statistics, although complex data matrices arise in some applications. For detailed discussions of this model, we refer to, {\it e.g.},~\cite{BaiS,Jon2,Jo,Ek,BPZ2}. In Subsection~\ref{application discussion}, we outline an application of this model. We denote the eigenvalues of~$\caQ$ and~$\Sigma$ in decreasing order by~$(\mu_i)_{i=1}^N$ and~$(\sigma_m)_{m=1}^M$ respectively.

The main results of this paper show that the limiting distribution of the largest rescaled eigenvalue of~$\caQ$ is given by the Tracy-Widom distribution, {\it i.e.},
\begin{align}\label{le what we show}
 \lim_{N\to\infty}\mathbb{P}\left(\gamma_0 N^{2/3}{(\mu_1-E_+)}\le s \right)=F_1(s)\,,\qquad\qquad (s\in\R)\,,
\end{align}
where $\gamma_0\equiv\gamma_0(N)$ and $E_+\equiv L_+(N)$ depend only on the sequence $(\sigma_m)_{m=1}^M$ and the ratio~$\widehat d$. Here,~$F_1$ denotes the cumulative distribution function (CDF) of the type-$1$ Tracy-Widom distribution~\cite{TW1,TW2} which arises as the limiting CDF of the largest rescaled eigenvalue of the Gaussian orthogonal ensemble (GOE). More precisely, we show that~\eqref{le what we show} holds in the ``sub-critical regime'' where the largest eigenvalues of~$\Sigma$ are close to the bulk of the spectrum of~$\Sigma$ (for a precise statement see Assumption~\ref{assumptions for local MP law} below) and if either of the followings holds:
\begin{itemize}[noitemsep,topsep=0pt,partopsep=0pt,parsep=0pt]
 \item[$(1)$] the entries of $X$ are i.i.d.\ Gaussians (Corollary~\ref{cor main}), or
 \item[$(2)$] the general population~$\Sigma$ is diagonal and the entries of $X$ have a subexponential decay (Theorem~\ref{thm main}).
\end{itemize}

Our results are also valid in the setting of complex data matrices $X$. In that setup, one replaces~$F_1$ in~\eqref{le what we show} by~$F_2$, the CDF of the type-$2$ Tracy-Widom distribution which arises as the limiting CDF of the largest rescaled eigenvalue of the Gaussian unitary ensemble (GUE).

To situate our result in the literature, we first recall that the limiting spectral distribution of the model~\eqref{le Q intro} was derived for general $\Sigma$ by Marchenko and Pastur~\cite{MP}. When~$X$ has i.i.d.\ Gaussian entries,~$\caQ$ is called a Wishart matrix. For Wishart matrices with identity population covariance, often referred to the null case, it is well known that the limiting distribution of the largest rescaled eigenvalue coincides with the corresponding distribution of the GOE and GUE respectively: in the null case,~\eqref{le what we show} was obtained in~\cite{Jo} for real Wishart matrices and in~\cite{J} for complex Wishart matrices.

In the non-null case, where~$\Sigma$ is not a multiple of the identity matrix, first results were obtained for spiked population models introduced in~\cite{Jo}, where~$\Sigma$ is a finite rank perturbation of the identity matrix. Complex spiked Wishart matrices were studied in~\cite{BBP}, where an interesting phase transition in the asymptotic behavior of the largest rescaled eigenvalue as a function of the spikes was observed. In particular, it was shown that the largest rescaled eigenvalue follows the Tracy-Widom distribution~$F_2$ in the sub-critical regime, {\it i.e.},  for small finite rank perturbations. These results rely on an explicit formula--the Baik-Ben Arous-Johansson-P\'{e}ch\'{e} (BBJP)-formula--for the joint eigenvalue distribution of complex Wishart matrices. For real Wishart matrices, the counterpart of the BBJP-formula is not available, due to the lack of an analogue of the Harish-Chandra-Itzykson-Zuber 
integral for the orthogonal group. Relying on quite different methods, almost sure convergence of the largest eigenvalues was derived in~\cite{BS} and Tracy-Widom fluctuations of the largest eigenvalue of spiked population models were obtained in~\cite{FP}. The equivalent results of the aforementioned phase transition for finite rank perturbations were obtained in the real setting in~\cite{BV1,BV2,Mo,F1}.

In the general non-null case sufficient conditions for the validity of~\eqref{le what we show} in the sub-critical regime were given in~\cite{Ek} for the non-singular case $d\in(0,\infty)$, $d\not=1$, and in~\cite{O} for the singular case $d=1$. Yet, these results rely on the BBJP-formula and are thus limited to complex Wishart matrices. Corollary~\ref{cor main} below establishes under similar assumptions the limiting behavior~\eqref{le what we show} for real Wishart matrices with general population.                                                                                                                                                                                                                                                                                                                        

The aforementioned results are believed to be universal in the sense that they are independent of the details of the distributions of the entries of $X$ (provided they decay sufficiently fast). This phenomenon is referred to as edge universality. It was established in the null case in~\cite{So,Pe1,FS} for symmetric distributions and subsequently in~\cite{Wa} for distributions with vanishing third moment. This third moment condition was removed in~\cite{PY}. For spiked sample covariance matrices, universality results were obtained in~\cite{FP} under the assumption that  the entries' distribution of $X$ are symmetric. This condition was removed in~\cite{BKYY}. For full rank deformed populations matrices $\Sigma$, universality results were obtained in~\cite{BPZ2} under the assumption that~$\Sigma$ is either diagonal or that the first four moments of the entries' distribution of $X$ match those of the standard Gaussian distribution in case~$\Sigma$ is non-diagonal. Recently, the edge universality was 
established in~\cite{KY} for general $\Sigma$. Once the edge 
universality for general sample covariance matrices has been established, the limiting CDF of the rescaled largest eigenvalue may then be identified in the complex setting with~$F_2$ via the results of~\cite{Ek,O}. In the real setting, this identification was only possible in the null case and finite rank deformations thereof. Our main new results allow this identification in the real setting with general population covariances, {\it i.e.,} it allows to identify $F_1$ as the limiting CDF of the rescaled largest eigenvalue.

Our proof of~\eqref{le what we show} is based on a comparison of Green functions. Discrete Green function comparison via Lindeberg's replacement strategy~\cite{C,TV2} was used to prove the edge universalities of Wigner matrices~\cite{EYY,TV2} and of null sample covariance matrices~\cite{PY}. Continuous Green function comparison was used to establish CLT results for linear statistics of null sample covariance matrices~\cite{LP}, and, more recently, to derive estimates on the Green function itself, {\it i.e.,} local laws, for non-null sample covariance matrices~\cite{KY}. However, as for the deformed Wigner matrices considered in~\cite{LSY}, a direct application of discrete or continuous Green function comparison does not work for non-null sample covariance matrices. We thus adopt the new approach developed in~\cite{LSY}: we consider a continuous interpolation between the given sample covariance matrix and a null sample covariance matrix. We follow the associated Green function flow and estimate its change 
over time. This change is then offset by renormalizing the matrix.

Our analysis requires as an {\it a priori} ingredient a local law for the Green function, {\it i.e.}, an optimal estimate on the entries of the Green function on scales slightly below $N^{-2/3}$ at the upper edge (see Lemma~\ref{lemma local law} below for a precise statement). Optimal local laws in the bulk and at the edges of the spectrum were obtained for Wigner matrices in~\cite{EYY1,EYYBernoulli,EKYY4}. Using a similar approach, optimal local laws for sample covariance matrices with $\Sigma=\lone$ were obtained in~\cite{PY}; see also~\cite{BKYY1,ESYY}. These results were extended to sample covariance matrices with general population under a four moment matching condition in~\cite{BPZ2}. The four moment matching condition was very recently removed in~\cite{KY}.

This paper is organized as follows: In Section 2, we define the model, present the main results of the paper and outline some applications. In Sections 3, we collect the tools and known results we need in our proofs. In Section 4, we prove the main theorems using our essential new technical result, Proposition~\ref{prop green}, the Green function comparison theorem at the edge. In Sections 5 and 6, we outline the ideas of the proof of the Green function comparison theorem. Its technical details can be found in the Appendices~\ref{sec:expansion of GiaGai},~\ref{sec:optical theorems} and~\ref{sec:proof of getting rid of the rest}. Some results required in these appendices are adaptations from~\cite{LSY}.

\section{Definitions and Main Result}

\subsection{Sample covariance matrix with general population}

\begin{definition} \label{assumption sample}
Let $X=(x_{ij})$ be an $M\times N$ matrix whose entries $\{x_{ij}\,:\, 1\le i\le M,\, 1\le j\le N\}$ are a collection of independent real random variables such that 
\begin{align}
 \E\, x_{ij}=0\,,\qquad \quad \E\,|x_{ij}|^2=\frac{1}{N}\,.
\end{align}
Moreover, we assume that $(\sqrt{N}x_{ij})$ have a subexponential tail, {\it i.e.}, there are~$C$ and~$\vartheta>0$ such that
\begin{align}\label{le subexponential decay}
 \mathbb{P}(|\sqrt{N}x_{ij}|> t)\le C\e{-t^{\vartheta}}\,,
\end{align}
for all~$i,j$. 

Further, $M\equiv M(N)$ with
\begin{align}
\widehat d=\frac{N}{M}\to d\in (0,\infty)\,,
\end{align}
as $N\to \infty$. For simplicity, we assume that~$N/M$ is constant, hence we use~$d$ instead of~$\widehat d$.
\end{definition}

Note that we do not require in Definition~\ref{assumption sample} that the entries or columns of $X$  are identically distributed.

Let~$\Sigma$ be an~$M\times M$ real positive-definite deterministic matrix. We denote by~$\widehat \rho$ the empirical eigenvalue distribution of~$\Sigma$,  {\it i.e.}, if we let~$\sigma_1 \geq \sigma_2 \geq \cdots \geq \sigma_M\ge 0$ be the eigenvalues of~$\Sigma$, then
\begin{align}\label{le spectral distriution of sigma}
 \widehat\rho\deq\frac{1}{M}\sum_{j=1}^M\delta_{\sigma_j}\,.
\end{align}
We then form the sample covariance matrix
\begin{align}
 \caQ\deq (\Sigma^{1/2}X)(\Sigma^{1/2}X)\adj \,,
\end{align}
and denote its eigenvalues in decreasing order by $\mu_1\ge \mu_2\ge\ldots\ge\mu_M$. Note that the $M\times M$ matrix~$\caQ$ and the $N\times N$ matrix
\begin{align}
 Q\deq X\adj \Sigma X
\end{align}
share the same non-zero eigenvalues. Since we are interested in behavior of the largest eigenvalues, we focus on $Q$ since it is for technical reasons more amenable than $\caQ$. With some abuse of terminology, we also call $Q$ a sample covariance matrix and we denote its $M$ largest eigenvalues by $(\mu_i)_{i=1}^M$, too.

\subsection{Deformed Marchenko Pastur law}\label{subsection deformed MP law}
Assuming that the empirical spectral distribution $\widehat\rho$ of $\Sigma$ converges weakly to some distribution~$\rho$, it was shown in~\cite{MP} that the empirical eigenvalue distribution of~$Q$ converges weakly in probability to a deterministic distribution, $\rho_{\fc}$, referred to as the ``deformed Marchenko-Pastur law'' below, which depends on~$\rho$ and the ratio~$d$. It can be described in terms of its Stieltjes transform: For a (probability) measure~$\omega$ on the real line we define its Stieltjes transform by
\begin{align}
 m_{\omega}(z)\deq\int\frac{\dd\omega(v)}{v-z}\,,\qquad\qquad (z=E+\ii\eta\in\C^+)\,.
\end{align}
Here and below, we write $z=E+\ii\eta$, with $E\in\R$, $\eta\ge 0$. Note that $m_{\omega}$ is an analytic function in the upper half-plane and that $\im m_\omega(z)\ge 0$, $\im z> 0$. Assuming that $\omega$ is absolutely continuous with respect to Lebesgue measure, we can recover the density of $\omega$ from $m_{\omega}$ by the inversion formula
\begin{align}\label{stieltjes inversion formula}
 \omega(E)=\lim_{\eta\searrow 0}\frac{1}{\pi}\im m_{\omega}(E+\ii \eta)\,,\qquad\qquad (E\in\R)\,.
\end{align}
We use the same symbols to denote measures and their densities.

Choosing $\omega$ to be the standard Marchenko-Pastur law $\rho_{\MP}$, the Stieltjes transform $m_{\rho_{\MP}}\equiv m_{\MP}$ can be computed explicitly and one checks that~$m_{\MP}$ satisfies the relation
\begin{align}
 m_{\MP}(z)=\frac{1}{-z+d^{-1}\frac{1}{ m_{\MP}(z)+1}}\,,\quad \im  m_{\MP}(z)\ge 0\,,\qquad (z\in\C^{+})\,.
\end{align}

The deformed Marchenko-Pastur law $\rho_{\fc}$ is defined as follows. Assume that~$\widehat\rho$ converges weakly to~$\rho$ as~$N$ goes to infinity. Then the Stieltjes transform of the deformed Marchenko-Pastur law, $m_{\fc}$, is obtained as the unique solution to the self-consistent equation
\begin{align}\label{defining equation for mfc}
 m_{\fc}(z)=\frac{1}{-z+d^{-1}\int\frac{t}{tm_{\fc}(z)+1}\dd\rho(t)}\,,\quad \im m_{\fc}(z)\ge 0\,,\qquad(z\in\C^+)\,.
\end{align}
It is well known~\cite{MP} that the functional equation~\eqref{defining equation for mfc} has a unique solution that is uniformly bounded on the upper half-plane. The density of the deformed Marchenko-Pastur law~$\rho_{\fc}$ is obtained from~$m_{\fc}$ by the Stieltjes inversion formula~\eqref{stieltjes inversion formula}. The measure~$\rho_{\fc}$ has been studied in~\cite{SC}, {\it e.g.}, it was shown that the density of~$\rho_{\fc}$ is an analytic function inside its support. The measure~$\rho_{\fc}$ is also called the multiplicative free convolution of the Marchenko-Pastur law and the measure~$\rho$; we refer to, {\it e.g.},~\cite{VDN,AGZ}.

For finite $N$, we let $\widehat m_{\fc}$ denote the unique solution to
\begin{align} \label{hat mfc}
 \widehat m_{\fc}(z)=\frac{1}{-z+d^{-1}\int\frac{t}{t\widehat m_{\fc}(z)+1}\dd\widehat\rho(t)}\,,\quad \im \widehat m_{\fc}(z)\ge 0\,,\qquad (z\in\C^{+})\,,
\end{align}
and let $\widehat\rho_{\fc}$ denote the measure obtained from $\widehat m_{\fc}(z)$ through~\eqref{stieltjes inversion formula}. It is easy to check that~$\widehat\rho_{\fc}$ is a well-defined probability measure with a continuous density.

The rightmost endpoint of the support of $\widehat\rho_{\fc}$ is determined as follows. Define~$\xi_+$ as the largest solution to
\begin{align} \label{xi+}
 \int\left(\frac{t \xi_+}{1-t \xi_+} \right)^2\dd\widehat\rho(t)=d\,,
\end{align}
with $d=\frac{N}{M}$. Note that $\xi_+$ is unique and that $\xi_+\in[0, \sigma_1^{-1}]$. We also introduce~$E_+$ by setting
\begin{align} \label{E+}
E_+\deq\frac{1}{\xi_+}\left(1+d^{-1}\int\frac{t \xi_+}{1-t \xi_+}\dd\widehat\rho(t)\right)\,.
\end{align}
Considering the imaginary part of~\eqref{hat mfc} in the limit~$\eta\searrow 0$, one infers~\cite{SC} that the rightmost edge of~$\widehat\rho_{\fc}$, {\it i.e.}, the rightmost endpoint of the support of $\widehat\rho_{\fc}$, is given by $E_+$ and that
\begin{align}
\xi_+=-\lim_{\eta\to 0}\widehat m_{\fc}(E_+ +\ii\eta)=-\widehat m_{\fc}(E_+)\,.
\end{align}

The following assumption is required to establish our main results. It appeared previously in~\cite{Ek,BPZ2}.

\begin{assumption}\label{assumptions for local MP law}
Let $\sigma_1\ge\sigma_2\ge\ldots \ge\sigma_M$ denote the eigenvalues of $\Sigma$. Then, we assume that
$\liminf_N \sigma_M>0$, $\limsup_N \sigma_1<\infty$ and
\begin{align}\label{Sigma assumption}
\limsup_N \sigma_1 \xi_+<1\,.
\end{align}

\end{assumption}

\begin{remark}
 We remark that Assumption~\ref{assumptions for local MP law} was used in \cite{BPZ2,KY} to derive the local deformed Marchenko-Pastur law for $Q$. The inequality~\eqref{Sigma assumption} guarantees that the distribution~$\widehat \rho_{\fc}(E)$ exhibits a square-root type behavior at the rightmost endpoint of its support; see Lemma~\ref{lem:square root for hat mfc} below.
\end{remark}

\subsection{Main result}

The main result of this paper is as follows:
\begin{theorem} \label{thm main}
Let $Q = X^* \Sigma X$ be an $N \times N$ sample covariance matrix with sample~$X$ and population~$\Sigma$, where $X$ is a real random matrix satisfying the assumptions in Definition~\ref{assumption sample} and $\Sigma$ is a real diagonal deterministic matrix satisfying Assumption~\ref{assumptions for local MP law}. Recall that $F_1$ denotes the cumulative distribution function of the type-1 Tracy-Widom distribution.

Let~$\mu_1$ be the largest eigenvalue of~$Q$. Then, there exist $\gamma_0\equiv \gamma_0(N)$ depending only on the empirical eigenvalue distribution $\widehat\rho$ of $\Sigma$ and the ratio~$d$ such that the distribution of the largest rescaled eigenvalue of~$Q$ converges to the Tracy-Widom distribution, {\it i.e.},
\beq \label{eq:main}
\lim_{N \to \infty} \p \left( \gamma_0 N^{2/3} \big( \mu_1 - E_+ \big) \leq s \right) = F_1 (s)\,,
\eeq
for all $s\in\R$, where $E_+\equiv E_+(N)$ is given in~\eqref{E+}.
\end{theorem}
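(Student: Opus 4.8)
The plan is to transfer the known type-$1$ Tracy-Widom limit $F_1$ for the null real Wishart ensemble to the general diagonal population by means of a Green function comparison at the upper spectral edge. Throughout I work with the $N\times N$ matrix $Q=X\adj\Sigma X$ and its Green function $G(z)=(Q-z)^{-1}$, $z=E+\ii\eta$. As \emph{a priori} inputs I would first record the local deformed Marchenko-Pastur law for $Q$ near $E_+$ (Lemma~\ref{lemma local law}), which under Assumption~\ref{assumptions for local MP law} is available from \cite{BPZ2,KY}: with overwhelming probability the entries $G_{ij}(z)$ are controlled pointwise and on average by $\widehat m_{\fc}(z)$ down to scales $\eta\ge N^{-2/3-\epsilon}$, and $\mu_1$ is rigid at distance $\le N^{-2/3+\epsilon}$ from $E_+$. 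Together with the inequality~\eqref{Sigma assumption}, this yields the square-root behaviour of $\widehat\rho_{\fc}$ at $E_+$ (Lemma~\ref{lem:square root for hat mfc}), which fixes the rescaling exponent $2/3$ and identifies the coefficient $\gamma_0\equiv\gamma_0(N)$, depending only on $\widehat\rho$ and $d$.

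Next I would pass from eigenvalue statistics to Green functions: by the standard smoothing argument relating $\p(\mu_1\le E)$ to expectations of smooth functionals of the Green function (as in \cite{EYY,PY}), one sandwiches $\p(\gamma_0 N^{2/3}(\mu_1-E_+)\le s)$ between expectations $\E[\theta(\Tr(\chi_{\eta}\ast\mathbbm{1}_{[E_s,\infty)})(Q))]$, where $\eta=N^{-2/3-\epsilon}$, $E_s=E_++s\gamma_0^{-1}N^{-2/3}$ and $\theta,\chi$ are fixed smooth cutoffs, the sandwiching error being $o(1)$ by the rigidity above. It then suffices to compare $\E[\theta(\cdots)]$ between $Q$ and a reference sample covariance matrix.

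The heart of the argument is the Green function comparison, Proposition~\ref{prop green}. Following the continuous interpolation strategy of \cite{LSY}, I would connect $Q$ to a null sample covariance matrix by a path $t\mapsto Q_t$ along which a scalar renormalization of $Q_t$ (equivalently a $t$-dependent shift of the spectral parameter), dictated by the self-consistent equation~\eqref{hat mfc}, keeps the rightmost edge pinned at $E_+$. The estimate to prove is that $\partial_t\,\E[\theta(\cdots)]=o(1)$ uniformly along the flow; one obtains it by differentiating in $t$, expanding the resulting resolvent polynomials, and showing that the potentially dangerous subleading contributions cancel through ``optical''-type identities for averaged products such as $\sum_a\sigma_a G_{ia}G_{ai}$, the remainders being estimated by the local law. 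Diagonality of $\Sigma$ enters precisely here: it keeps the entries of $\Sigma^{1/2}X$ independent, so that the flow (or an equivalent entrywise replacement) can be carried out; for non-diagonal $\Sigma$ this breaks down, which is why the general-population statement requires Gaussian entries or a four-moment matching condition (cf.\ \cite{BPZ2}). I expect this step to be the main obstacle: the resolvent expansion and the identification of the cancellations are substantially heavier than in the Wigner case because of the bipartite $M\neq N$ structure and the population weights $\sigma_i$.

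Finally, by the comparison the limit in~\eqref{eq:main} is unchanged if $X$ is replaced by a Gaussian sample with the same diagonal $\Sigma$; applying the interpolation once more with $\Sigma$ deformed to $\lone$ (again renormalizing to freeze the edge) reduces to the null real Wishart ensemble, for which~\eqref{eq:main} holds by \cite{Jo}. Collecting the $o(1)$ errors from the preceding steps then gives the claimed identity.
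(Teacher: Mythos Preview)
Your proposal tracks the paper's argument closely through the third paragraph: local law and rigidity as \emph{a priori} input (Lemmas~\ref{lem:square root for hat mfc}--\ref{lemma local law}), the smoothing reduction of $\p(\mu_1\le E)$ to Green-function functionals (Lemma~\ref{lem approx}), and the continuous $\Sigma$-flow of \cite{LSY} type with optical-theorem cancellations (Proposition~\ref{prop green}, Lemmas~\ref{le first lemma} and~\ref{getting rid of the rest}). Your remark on why diagonality of $\Sigma$ matters is also on point.

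The final paragraph, however, diverges from the paper and from what the flow in your third paragraph actually yields. The interpolation deforms $\Sigma(t)\to\lone$ (via $\sigma_\alpha(t)^{-1}=\e{-t}\sigma_\alpha(0)^{-1}+(1-\e{-t})$) while keeping the \emph{same} general $X$ throughout; the endpoint is $W=\sqrt d\,(1+\sqrt d)^{-4/3}X^*X$ with the original non-Gaussian entries, and its Tracy--Widom limit is supplied by the null-case edge universality of \cite{PY} (together with \cite{So,Pe1,FS}), not by \cite{Jo}. There is no intermediate step ``replace $X$ by a Gaussian sample with the same diagonal $\Sigma$'', and none is needed---inserting one would require a separate Lindeberg-type comparison on top of the $\Sigma$-flow, which is precisely what the paper avoids. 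A second inaccuracy: the edge is not ``pinned at $E_+$'' along the flow. Rather, $L_+(t)$ moves and the spectral parameter $z(t)=L_+(t)+y+\ii\eta$ tracks it; the scalar $\gamma(t)$ is chosen not to freeze the edge but to normalise the square-root coefficient of $\widehat\rho_{\fc}(\cdot,t)$ at $L_+(t)$ for all $t$. This normalisation, encoded in the sum rules~\eqref{tau 2}, is exactly what makes the leading $\caO(M\Psi^2)$ contribution to $\partial_t\E[G_{ii}]$ cancel via~\eqref{dot z}, and it drives the subsequent cancellations down to $\caO(M\Psi^5)$.
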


\begin{remark}
The scaling factor $\gamma_0\equiv \gamma_0(N)$ is given by~\cite{Ek}
\begin{align}\label{le gamma0}
\frac{1}{\gamma_0^{3}} = \frac{1}{d} \int \left( \frac{t}{1 - t \xi_+} \right)^3 \dd \widehat{\rho}(t) + \frac{1}{\xi_+^3}\,.
\end{align}
It follows from Assumption~\ref{assumptions for local MP law} that $\gamma_0=O(N^0)$.
\end{remark}

\begin{remark} \label{k main}\label{remark k largest eigenvalues}
Theorem \ref{thm main} can be extended to correlation functions of the extremal eigenvalues as follows: Let $W^{\GOE}$ be an $N\times N$ random matrix belonging to the Gaussian Orthogonal Ensemble (GOE); see~\cite{AGZ,Mbook}. The joint distributions of $\mu_1^{\GOE} \geq \mu_2^{\GOE} \geq \cdots \geq \mu_N^{\GOE}$, the eigenvalues of $W^{\GOE}$, are explicit and the joint distribution of the $k$ largest eigenvalues can be written in terms of the Airy kernel~\cite{F} for any fixed~$k$.  The generalization of~\eqref{eq:main} to the $k$ largest eigenvalues of~$Q$ then reads
\begin{align} \label{eq: k main}
\lim_{N \to \infty} \p \left(\left(\gamma_0 N^{2/3} \big( \mu_i - E_+ \big) \leq s_i\right)_{1\le i\le k} \right)= \lim_{N \to \infty} \p \left(\left( N^{2/3} \big( \mu_i^{\GOE} - 2 \big) \leq s_i \right)_{1\le i\le k}\right)\,,
 \end{align}
for all $s_1,s_2,\ldots, s_k\in\R$.
\end{remark}

If the entries of $X$ are Gaussian, the result in Theorem~\ref{thm main} holds for general, non-diagonal $\Sigma$.

\begin{corollary} \label{cor main}
Let $Q = X^* \Sigma X$ be an $N \times N$ sample covariance matrix with sample $X$ and general population $\Sigma$, where $X$ is a real random matrix with independent Gaussian entries satisfying the assumptions in Definition~\ref{assumption sample} and $\Sigma$ is a real positive-definite deterministic matrix satisfying Assumption~\ref{assumptions for local MP law}. Let~$\mu_1$ be the largest eigenvalue of $Q$. 

Then the distribution of the largest rescaled eigenvalue of $Q$ converges to the type-$1$ Tracy-Widom distribution, {\it i.e.},
\beq\label{equation cor main}
\lim_{N \to \infty} \p \left(\gamma_0 N^{2/3} \big( \mu_1 - E_+ \big) \leq s \right) = F_1 (s)\,,
\eeq
for all $s\in\R$, where $E_+\equiv E_+(N)$ is given in~\eqref{E+} and $\gamma_0=\gamma_0(N)$ is given in~\eqref{le gamma0}.
\end{corollary}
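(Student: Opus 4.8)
The plan is to deduce Corollary~\ref{cor main} from Theorem~\ref{thm main} by orthogonal invariance of the real Gaussian ensemble, so that the only real content is the already-established diagonal case. First I note that, under the hypotheses of the corollary, the entries of $X$ are independent \emph{and} Gaussian with mean $0$ and variance $1/N$; since a real Gaussian law is determined by its first two moments, the entries are in fact i.i.d.\ $\mathcal{N}(0,N^{-1})$. Consequently each column of $X$ is a standard Gaussian vector in $\R^M$ rescaled by $N^{-1/2}$, the columns are independent, and therefore $UX \stackrel{\mathrm{d}}{=} X$ for every fixed orthogonal $U\in\mathrm{O}(M)$.

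Next I would diagonalise the population: write $\Sigma = O D O\adj$ with $O\in\mathrm{O}(M)$ and $D=\diag(\sigma_1,\dots,\sigma_M)$ the eigenvalues of $\Sigma$ in decreasing order. Then
\[
 Q = X\adj \Sigma X = X\adj O D O\adj X = (O\adj X)\adj\, D\, (O\adj X) \stackrel{\mathrm{d}}{=} X\adj D X \eqqcolon \wt Q\,,
\]
using that $O\adj X \stackrel{\mathrm{d}}{=} X$. Hence the largest eigenvalue $\mu_1$ of $Q$ has exactly the same distribution as the largest eigenvalue of the sample covariance matrix $\wt Q = X\adj D X$ with the \emph{diagonal} population $D$, to which Theorem~\ref{thm main} is applicable.

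It then remains to check that Theorem~\ref{thm main}, applied to $\wt Q$, produces the same centering $E_+$ and scaling $\gamma_0$ that appear in~\eqref{equation cor main}. This is immediate: the empirical eigenvalue distribution of $D$ is again $\widehat\rho$, and the quantities $\xi_+$, $E_+$ and $\gamma_0$ of~\eqref{xi+}, \eqref{E+} and~\eqref{le gamma0} are functions of $\widehat\rho$ and $d$ only; likewise Assumption~\ref{assumptions for local MP law}, being a condition on $\sigma_1,\dots,\sigma_M$ and $\xi_+$, holds for $D$ precisely when it holds for $\Sigma$, and a diagonal real matrix trivially meets the positive-definiteness requirement of Theorem~\ref{thm main}. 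Transporting the Tracy--Widom limit for $\wt Q$ back to $Q$ through the distributional identity above gives~\eqref{equation cor main}. There is essentially no analytic obstacle in this argument — the substance lies entirely in Theorem~\ref{thm main} — the only thing to be careful about is precisely this bookkeeping that every input of Theorem~\ref{thm main} depends on $\Sigma$ through its spectrum alone, hence is unaffected by the conjugation by $O$. The same reduction establishes Remark~\ref{remark k largest eigenvalues} in the non-diagonal Gaussian case, and, replacing $\mathrm{O}(M)$ by $\mathrm{U}(M)$ and $F_1$ by $F_2$, the complex Gaussian analogue.
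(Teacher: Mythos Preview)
Your argument is correct and is essentially the same as the paper's own proof: diagonalise $\Sigma$ by an orthogonal matrix, use orthogonal invariance of the real Gaussian ensemble so that the transformed data matrix again satisfies Definition~\ref{assumption sample}, and then apply Theorem~\ref{thm main} to the diagonal population $D$. Your added bookkeeping that $\xi_+$, $E_+$, $\gamma_0$ and Assumption~\ref{assumptions for local MP law} depend on $\Sigma$ only through $\widehat\rho$ is a welcome clarification that the paper leaves implicit.
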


\begin{remark}
For non-Gaussian $X$ and general off-diagonal $\Sigma$, we can combine our results with edge universality results in~\cite{BPZ2,KY} to identify the Tracy-Widom distribution for the largest eigenvalues. 
\end{remark}

\subsection{Applications}\label{application discussion}
In this subsection, we briefly discuss possible applications of our results to statistics. For a general overview of applications of random matrix theory to statistical inference we refer to the review~\cite{Jon2}, where the following application to signal detection problems is described. 

Consider a signal-plus-noise vector 
\begin{align}
\boldsymbol{y}\deq D\boldsymbol{s}+\Sigma_{0}^{1/2} \boldsymbol{z}
\end{align}
of dimension $M$, where $\boldsymbol{s}$ is a $k$-dimensional real mean-zero signal vector with population covariance matrix $S$, $D$ is a $M\times k$ real deterministic matrix which is of full column rank, $\boldsymbol{z}$ is an $M$-dimensional real or complex random vector and $\Sigma_0$ is an $M\times M$ deterministic positive-definite matrix. In many situations $\boldsymbol{z}$ is assumed to be Gaussian. Assuming that the signal vector, $D\boldsymbol{s}$, and the noise vector, $\Sigma_0^{1/2}\boldsymbol{z}$, are independent the population covariance matrix, $\Sigma$, of $\boldsymbol{y}$ is given by
\begin{align}
\Sigma=DSD^*+\Sigma_0\,. 
\end{align}
A fundamental question is to detect signals from given data, {\it e.g.}, from independent samples, $(\boldsymbol{y}_i)_{i=1}^N$, of $\boldsymbol{y}$ and its associated sample covariance matrix~$\caQ$. A first step in this analysis is to determine whether there is any signal present that can be detected at all. Once signals are detected, one is led to estimate~$k$. More precisely, the first aim is to test, with general correlation noise $\Sigma_0^{1/2}\boldsymbol{z}$, whether there is no signal present, {\it i.e.,} to test the null hypothesis $k=0$ against the alternative hypothesis $k>1$. For details and results in the classical setting of large sample size $N$ and low dimensionality~$M$ see~\cite{kay}. 

In the high dimensional setup, the described signal detection problem was considered in~\cite{EN,Bia}, where the usability and performance of the largest eigenvalue as a test statistics was discussed in the presence of white Gaussian noise, {\it i.e.,} $\Sigma_0=\lone$ and Gaussian $\boldsymbol{z}$. In~\cite{RRNJWS}, the detection problem in presence of correlated Gaussian noise, {\it i.e.,} $\Sigma_0\not=\lone$ and Gaussian $\boldsymbol{z}$, was discussed at length. We also refer to~\cite{Vi} for further developments.

In the discussion above, it was implicitly assumed that $\Sigma_0$ is known {\it a priori}. For many real systems, however, $\Sigma_0$ is usually unknown. In particular, $\zeta_+$, $E_+$ and $\gamma_0$ of, {\it e.g.,}~\eqref{eq:main}, are unknown and the largest eigenvalue $\mu_1$ may not directly be used as a test statistics. Following~\cite{O2}, we observe that~$E_+$ and~$\gamma_0$ are eliminated under the null hypothesis in the test statistics $R\deq (\mu_1-\mu_2)/(\mu_2-\mu_3)$, with $\mu_2$, $\mu_3$, the second, respectively third largest eigenvalue of~$\caQ$. On the other hand, the limiting distribution of $R$ is determined by the Tracy-Widom-Airy statistics under the null hypothesis as stated in Remark~\ref{remark k largest eigenvalues} above. In fact, in the complex setting the test statistics~$R$ was shown~\cite{O2} to be asymptotically pivotal under the null hypothesis and one expects the same results to hold in the real setting. While there is no explicit formula for the limiting distribution of 
$R$, it may be effectively approximated by using numerics for extremal eigenvalues of GOE, respectively GUE, matrices.

\section{Preliminaries}

\subsection{Notations} \label{notation}
We first introduce a notation for high-probability estimates which is suited for our purposes. A slightly different form was first used in~\cite{EKY}.
\begin{definition}\label{definition of stochastic domination}
Let
\begin{align*}
 X=(X^{(N)}(u)\,:\, N\in\N\,, u\in U^{(N)})\,,\quad Y=(Y^{(N)}(u)\,:\, N\in\N\,,\,u\in U^{(N)})
\end{align*}
be two families of nonnegative random variables where $U^{(N)}$ is a possibly $N$-dependent parameter set. We say that $Y$ stochastically dominates $X$, uniformly in~$u$, if for all (small) $\epsilon>0$ and (large) $D>0$,
\begin{align}
 \sup_{u\in U^{(N)}}\P\left[X^{(N)}(u)>N^{\epsilon} Y^{(N)}(u) \right]\le N^{-D}\,,
\end{align}
for sufficiently large $N\ge N_0(\epsilon,D)$. If $Y$ stochastically dominates $X$, uniformly in~$u$, we write $X \prec Y$. If for some complex family $X$ we have $|X|\prec Y$ we also write $X=\caO(Y)$. 
\end{definition}
The relation $\prec$ is a partial ordering: it is transitive and it satisfies the arithmetic rules of an order relation, {\it e.g.}, if $X_1\prec Y_1$ and $X_2\prec Y_2$ then $X_1+X_2\prec Y_1+Y_2$ and $X_1 X_2\prec Y_1 Y_2$. Further assume that $\Phi(u)\ge N^{-C}$ is deterministic and that~$Y(u)$ is a nonnegative random variable satisfying $\E [Y(u)]^2\le N^{C'}$ for all~$u$. Then $Y(u) \prec \Phi(u)$, uniformly in $u$, implies $\E [Y(u)] \prec \Phi(u)$, uniformly in~$u$.

We use the symbols $O(\,\cdot\,)$ and $o(\,\cdot\,)$ for the standard big-O and \mbox{little-o} notation. The notations $O$, $o$, $\ll$, $\gg$, refer to the limit $N\to \infty$ unless otherwise stated. Here $a\ll b$ means $a=o(b)$. We use~$c$ and~$C$ to denote positive constants that do not depend on $N$, usually with the convention $c\le C$. Their value may change from line to line. We write $a\sim b$, if there is $C\ge1$ such that $C^{-1}|b|\le |a|\le C |b|$.

Finally, we use double brackets to denote index sets, {\it i.e.}, 
$$
\llbracket n_1, n_2 \rrbracket \deq [n_1, n_2] \cap \Z\,,
$$
for $n_1, n_2 \in \R$.

\subsection{Local deformed Marchenko-Pastur law}

For small positive $\frc, \epsilon$ and sufficiently large $C_+$, ($C_+>E_+$), we define the domain,  $\caD(\frc, \epsilon)$, of the spectral parameter $z$ by 
\begin{align*}
\caD(\frc, \epsilon)\deq\left\{z=E+\ii\eta\in\C^+\,:\, E_+-\frc\le E\le C_+\,, N^{\epsilon} N^{-1}\le \eta\le 1  \right\}\,.
\end{align*}

Let $\kappa\equiv\kappa_E\deq|E- E_+|$. Then we have the following results:

\begin{lemma}[Theorem 3.1 in~\cite{BPZ2}] \label{lem:square root for hat mfc}
 Under Assumption~\ref{assumptions for local MP law}, there is $c>0$ such that
\begin{align}
 \widehat\rho_{\fc}(E)\sim\sqrt{E_+-E}\,,\qquad\qquad (E\in[E_+-2c,E_+])\,.
\end{align}
The Stieltjes transform $\widehat m_{\fc}(z)$ of $\widehat\rho_{\fc}$ satisfies the following.
\begin{itemize}
 \item[$i.$]
For $z\in\caD(c,0)$,
\begin{align}
 |\widehat m_{\fc}(z)|\sim 1\,.
\end{align}
\item[$ii.$]
For $z\in\caD(c,0)$,
\begin{align}
 \im \widehat m_{\fc}(z)\sim\begin{cases}
                            \frac{\eta}{\sqrt{\kappa+\eta}}\,,\qquad& \mathrm{if}\,\,E\ge E_++\eta\,,\\
\sqrt{\kappa+\eta}\,,&\mathrm{if}\,\, E\in[E_+-c, E_++\eta)\,.
                           \end{cases}
\end{align}

\end{itemize}

\end{lemma}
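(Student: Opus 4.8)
The statement to prove, Lemma~\ref{lem:square root for hat mfc}, is cited as ``Theorem 3.1 in~\cite{BPZ2}'', so the intended proof is a reference, but let me sketch how one would establish it directly from the self-consistent equation~\eqref{hat mfc} together with Assumption~\ref{assumptions for local MP law}.

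\medskip

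\noindent\textbf{Proof strategy.} The plan is to analyze the functional equation~\eqref{hat mfc} near the spectral edge $E_+$ by a local inversion / implicit-function argument, exactly as one does for the Marchenko--Pastur law and its deformations. Write $m\equiv\widehat m_{\fc}(z)$ and recast~\eqref{hat mfc} as $z = -\frac{1}{m} + \frac{1}{d}\int \frac{t}{tm+1}\,\dd\widehat\rho(t) =: \mathfrak{f}(m)$, an analytic function of $m$ on a neighbourhood of the negative real axis (note $tm+1$ stays away from zero when $m$ is near $-\xi_+$ because $\limsup_N\sigma_1\xi_+<1$ and $\liminf_N\sigma_M>0$ by Assumption~\ref{assumptions for local MP law}, so $1-t\xi_+ \sim 1$ uniformly in $t\in\supp\widehat\rho$). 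The edge $E_+$ and the value $\xi_+$ are characterised by $\mathfrak{f}'(-\xi_+)=0$, which is precisely the condition~\eqref{xi+}: indeed $\mathfrak{f}'(m) = \frac{1}{m^2} - \frac{1}{d}\int \frac{t^2}{(tm+1)^2}\,\dd\widehat\rho(t)$, and setting $m=-\xi_+$ gives $\xi_+^{-2} = \frac1d\int \frac{t^2}{(1-t\xi_+)^2}\,\dd\widehat\rho(t)$, i.e.~\eqref{xi+}; and $\mathfrak{f}(-\xi_+) = E_+$ by~\eqref{E+}. The key quantitative input is that the second derivative does \emph{not} vanish: $\mathfrak{f}''(-\xi_+) = -\frac{2}{\xi_+^3} - \frac{2}{d}\int \frac{t^3}{(1-t\xi_+)^3}\,\dd\widehat\rho(t) = -2/\gamma_0^3 \neq 0$ by~\eqref{le gamma0}, and by Assumption~\ref{assumptions for local MP law} this is bounded and bounded away from zero uniformly in $N$. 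Hence near $m=-\xi_+$ we have the expansion $z - E_+ = \mathfrak{f}(m) - \mathfrak{f}(-\xi_+) = \tfrac12\mathfrak{f}''(-\xi_+)(m+\xi_+)^2 + O(|m+\xi_+|^3)$, so inverting, $m + \xi_+ = \sqrt{2(z-E_+)/\mathfrak{f}''(-\xi_+)}\,(1 + O(|z-E_+|^{1/2}))$, with the branch of the square root fixed by $\im m \ge 0$ for $\im z > 0$. This square-root singularity is the origin of all the claimed estimates.

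\medskip

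\noindent\textbf{Deriving the three conclusions.} From the inversion formula above, for real $E\in[E_+-2c,E_+]$ one gets $\im m = \im\widehat m_{\fc}(E) \sim \sqrt{E_+-E}$ (the sign $\mathfrak{f}''(-\xi_+)<0$ makes $z-E_+<0$ produce a genuine imaginary part), and the Stieltjes inversion formula~\eqref{stieltjes inversion formula} then yields $\widehat\rho_{\fc}(E)\sim\sqrt{E_+-E}$ on $[E_+-2c,E_+]$; to the right of $E_+$ the solution $m$ stays real so the density vanishes, confirming $E_+$ is the rightmost edge. For part $(i)$, on the domain $\caD(c,0)$ we have $|m+\xi_+| = O(|z-E_+|^{1/2}) = O(1)$ small (after shrinking $c$), so $|m| = |{-\xi_+} + O(\sqrt{\kappa+\eta})| \sim \xi_+ \sim 1$, using $\xi_+\in(0,\sigma_1^{-1}]$ and $\sigma_1 = O(1)$, $\sigma_1\xi_+$ bounded away from $1$. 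For part $(ii)$, write $z - E_+ = (E-E_+) + \ii\eta$ and take imaginary parts in $m+\xi_+ = c_N\sqrt{z-E_+}(1+o(1))$ with $c_N = \sqrt{2/\mathfrak{f}''(-\xi_+)}$ of order one: a direct computation of $\im\sqrt{(E-E_+)+\ii\eta}$ splits into the two regimes — when $E - E_+ \ge \eta$ (to the right of the edge, inside the gap up to the smoothing scale) one finds $\im\sqrt{z-E_+} \sim \eta/\sqrt{\kappa+\eta}$, and when $E\in[E_+-c,E_++\eta)$ one finds $\im\sqrt{z-E_+}\sim\sqrt{\kappa+\eta}$, which are exactly the two cases in the statement. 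All error terms are controlled uniformly in $N$ because every constant appearing ($\xi_+$, $\mathfrak{f}''(-\xi_+)$, the radius of analyticity of $\mathfrak{f}$) is uniformly bounded and bounded away from its degenerate value by Assumption~\ref{assumptions for local MP law}.

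\medskip

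\noindent\textbf{Main obstacle.} The analytically delicate point is establishing the square-root behaviour \emph{with uniform constants in $N$}, i.e.\ showing that $c$, and the implied constants in $\sim$, can be chosen independently of $N$ and of which admissible $\Sigma$ we take. This requires that the quadratic coefficient $\mathfrak{f}''(-\xi_+) = -2/\gamma_0^3$ be uniformly nonzero and that the cubic and higher Taylor coefficients of $\mathfrak f$ near $-\xi_+$ be uniformly bounded — both of which follow from $\limsup_N\sigma_1\xi_+<1$ and $\liminf_N\sigma_M>0$ (so that $1-t\xi_+\sim 1$ on $\supp\widehat\rho$ uniformly), but checking that the implicit-function inversion is valid on a neighbourhood of $-\xi_+$ of $N$-independent size, and that no other critical point of $\mathfrak f$ interferes near $E_+$, is the part that takes real work. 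A secondary subtlety is verifying that the selected branch of the square root is the one consistent with $\im\widehat m_{\fc}\ge 0$ throughout $\caD(c,0)$, which one handles by a continuity/connectedness argument starting from large $\eta$ where $\widehat m_{\fc}(z)\approx -1/z$ and the sign is manifest.
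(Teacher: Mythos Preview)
Your proposal is correct. The paper itself provides no proof for this lemma and simply cites Theorem~3.1 of~\cite{BPZ2}; your sketch via the inverse-function expansion $z=\mathfrak{f}(m)$ with $\mathfrak{f}'(-\xi_+)=0$ and $\mathfrak{f}''(-\xi_+)=-2/\gamma_0^3\neq 0$ is exactly the standard argument used in that reference (and in~\cite{SC,Ek}) to extract the square-root edge behaviour, and your identification of the uniformity-in-$N$ as the only genuine difficulty is on point.
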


We introduce the $z$-dependent control parameter, $\Psi(z)$, by setting
\begin{align}\label{le definition of the psi}
 \Psi\equiv\Psi(z)\deq\left(\frac{\im \widehat m_{\fc}(z)}{N\eta}\right)^{1/2}+\frac{1}{N\eta}\,.
\end{align}
We remark that, for $z = E + \ii \eta$ with $\kappa_E \leq N^{-2/3 + \epsilon}$ and $\eta = N^{-2/3 - \epsilon}$, we have 
$$
\Psi \leq C N^{-2/3 + \epsilon}\,.
$$
Define the Green function $G_Q=((G_Q)_{ij})$ by
\begin{align}
G_Q (z) \deq (Q -z)^{-1}\,,\qquad\qquad (z\in\C^+)\,,
\end{align}
and denote its average by
\begin{align}
m_Q(z) \deq \frac{1}{N} \mathrm{Tr }\, G_Q(z)\,,\qquad\qquad(z\in\C^+)\,.
\end{align}

Recall that $\mu_1$ denotes the largest eigenvalue of the sample covariance matrix $Q$. We have the following local law from~\cite{BPZ2}.

\begin{lemma}[Theorem 3.2 and Theorem 3.3 in~\cite{BPZ2}]\label{lemma local law}
Under the Assumption~\ref{assumptions for local MP law}, we have, for any sufficiently small $\epsilon>0$,
\begin{align}
 |m_Q(z)-\widehat m_{\fc}(z)|\prec \frac{1}{N\eta}\,,\qquad\quad \max_{i,j}|(G_Q)_{ij}(z)-\delta_{ij}\widehat m_{\fc}(z)|\prec\Psi(z)\,,
\end{align}
 uniformly in $z$ on $\caD(\epsilon,c)$, where $c$ is the constant in Lemma~\ref{lem:square root for hat mfc}. Moreover, we have
\begin{align}
 |\mu_1- E_+|\prec N^{-2/3}\,,
\end{align}
where $E_+$ is given in~\eqref{E+}.
\end{lemma}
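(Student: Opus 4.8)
This statement is the optimal (entrywise and averaged) local deformed Marchenko--Pastur law together with the rigidity of the largest eigenvalue. I would prove it by the by-now-standard self-consistent equation method, sharpened by a fluctuation-averaging estimate, followed by an eigenvalue-counting argument for the edge. First, I would fix $z=E+\ii\eta\in\caD(\epsilon,c)$ and set up resolvent (Schur complement) identities for $G_Q(z)=(X^*\Sigma X-z)^{-1}$, obtained by deleting one column $\mathbf{x}_a$ of $X$ at a time (equivalently, by working with a bipartite linearization of $\caQ$ and $Q$ of size $M+N$). Each diagonal entry $(G_Q)_{aa}$ is then expressed through the minor $Q^{(a)}=(X^{(a)})^*\Sigma X^{(a)}$, and the off-diagonal contributions are quadratic forms $\mathbf{x}_a^* B\,\mathbf{x}_a-N^{-1}\tr B$ in the independent, mean-zero, variance-$N^{-1}$ entries of $X$, with $B$ independent of $\mathbf{x}_a$. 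The large-deviation bounds for such forms, together with the Ward identity $\sum_b|(G_Q)_{ab}|^2=\eta^{-1}\im(G_Q)_{aa}$ used to control $\|B\|_{\mathrm{HS}}$, show these errors are $\caO(\Psi(z))$. Averaging over $a$ then gives that $m_Q(z)$ satisfies the defining equation~\eqref{hat mfc} of $\widehat m_{\fc}$ up to an additive random error $\prec\Psi(z)$, and (for diagonal $\Sigma$; for non-diagonal $\Sigma$ after running the same computation at the level of an anisotropic law for $\langle\mathbf{u},G_Q\mathbf{v}\rangle$ with deterministic unit vectors) that $\max_{a,b}|(G_Q)_{ab}-\delta_{ab}\widehat m_{\fc}|\prec\Psi$, valid on the event that some weak a priori bound already holds.

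The deterministic heart of the argument is the stability of~\eqref{hat mfc} on $\caD(\epsilon,c)$: the linearization of the self-consistent map around $\widehat m_{\fc}(z)$ is invertible with operator norm $\lesssim(\kappa_E+\eta)^{-1/2}$. This is exactly where Assumption~\ref{assumptions for local MP law} enters --- the condition $\sigma_1\xi_+<1$ keeps $t\widehat m_{\fc}(z)+1$ bounded away from $0$ uniformly on the domain, excluding the critical regime --- together with the square-root edge behaviour of $\widehat\rho_{\fc}$ at $E_+$ from Lemma~\ref{lem:square root for hat mfc}. Feeding the $\caO(\Psi)$ error from the previous step into the stable equation and iterating, any weak a priori bound $|m_Q-\widehat m_{\fc}|\le N^{-\delta}$ self-improves to the optimal control. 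To produce that a priori bound, I would start at $\eta\sim1$, where $\|G_Q(z)\|\le\eta^{-1}$ and the stability constant is $O(1)$, so the estimates are elementary, and then decrease $\eta$ along a grid of spacing $N^{-10}$, using the deterministic Lipschitz bound $\|\partial_\eta G_Q(z)\|\le\eta^{-2}\le N^{2}$ to transfer the high-probability estimate from one grid point to the next; this propagates the entrywise law over all of $\caD(\epsilon,c)$. Finally, a fluctuation-averaging estimate --- exploiting the partial independence and the cancellations among the $N$ error terms contributing to $m_Q-\widehat m_{\fc}$ --- upgrades the averaged bound from $\prec\Psi$ to the claimed $\prec(N\eta)^{-1}$.

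The rigidity bound $|\mu_1-E_+|\prec N^{-2/3}$ then follows from the local law by a standard counting argument. Fix $\epsilon>0$ and set $\eta=N^{-2/3-\epsilon}$. For the upper bound, and $E\in[E_++N^{-2/3+\epsilon},C_+]$, Lemma~\ref{lem:square root for hat mfc} gives $\im\widehat m_{\fc}(E+\ii\eta)\sim\eta/\sqrt{\kappa_E}\ll N^{-1}$, so by the averaged law $\im m_Q(E+\ii\eta)\prec(N\eta)^{-1}$; since $\im m_Q(E+\ii\eta)\gtrsim(N\eta)^{-1}\#\{j:|\mu_j-E|\le\eta\}$, a union bound over an $O(N)$-point grid of such $E$ shows that, with high probability, $Q$ has no eigenvalue in $[E_++N^{-2/3+\epsilon},C_+]$; together with the crude norm bound $\|Q\|\prec1$ (valid since the entries of $X$ have subexponential tails, so $\mu_1\le C_+$) this gives $\mu_1\le E_++N^{-2/3+\epsilon}$. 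For the lower bound, the local law (via Helffer--Sjöstrand applied to $m_Q-\widehat m_{\fc}$) implies that the counting function $\mathcal{N}(E)=\#\{j:\mu_j\ge E\}$ differs from $N\,\widehat\rho_{\fc}([E,\infty))$ by an error much smaller than $N^{3\epsilon/2}$, uniformly in $E$; since $\widehat\rho_{\fc}([E_+-N^{-2/3+\epsilon},E_+])\gtrsim N^{-1+3\epsilon/2}$ by the square-root behaviour, there are with high probability eigenvalues in $[E_+-N^{-2/3+\epsilon},E_+]$, hence $\mu_1\ge E_+-N^{-2/3+\epsilon}$. As $\epsilon>0$ was arbitrary, $|\mu_1-E_+|\prec N^{-2/3}$.

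The main obstacle is the stability analysis underlying the second paragraph: establishing invertibility of the linearized self-consistent operator uniformly up to the spectral edge with the sharp rate $(\kappa+\eta)^{-1/2}$, which rests on a careful quantitative study of the square-root vanishing of $\widehat\rho_{\fc}$ at $E_+$ and on the precise role of Assumption~\ref{assumptions for local MP law}; and, when $\Sigma$ is not diagonal, upgrading the entire Schur-complement analysis to an anisotropic law, where one must control $\langle\mathbf{u},(G_Q-\widehat m_{\fc})\mathbf{v}\rangle$ for arbitrary deterministic unit vectors $\mathbf{u},\mathbf{v}$ rather than only the matrix entries in a fixed basis. The remaining ingredients --- the large-deviation inputs, the grid-continuity argument, fluctuation averaging, and the counting argument for rigidity --- are routine once the stability is in place.
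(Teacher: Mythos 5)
This lemma is not proved in the paper at all: it is imported verbatim from Theorems 3.2 and 3.3 of~\cite{BPZ2} (with the rigidity statement also available from~\cite{KY}), so there is no internal argument to compare yours against. Your outline is a faithful reconstruction of the standard proof in those references: Schur-complement expansion of $(G_Q)_{aa}$ through the minor obtained by deleting the $a$-th column, large-deviation bounds for the quadratic forms combined with the Ward identity to get $\caO(\Psi)$ errors, stability of the self-consistent equation~\eqref{hat mfc} with rate $(\kappa+\eta)^{-1/2}$ (which is exactly where~\eqref{Sigma assumption} keeps $t\widehat m_{\fc}(z)+1$ away from zero), a continuity bootstrap in $\eta$ from $\eta\sim 1$, fluctuation averaging for the improved averaged bound, and for non-diagonal $\Sigma$ the upgrade to an anisotropic law as in~\cite{KY}. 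All of this is the right architecture.

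One step of your rigidity argument is too optimistic as written. To exclude eigenvalues in $[E_++N^{-2/3+\epsilon},C_+]$ you argue that an eigenvalue within $\eta$ of $E$ forces $\im m_Q(E+\ii\eta)\ge (2N\eta)^{-1}$, and that this contradicts $\im m_Q\prec (N\eta)^{-1}$. It does not: the relation $\prec$ absorbs factors $N^{\epsilon'}$, so the upper and lower bounds are of the same order and no contradiction results. The standard fix, and what~\cite{BPZ2,KY} actually prove for this purpose, is a \emph{stronger} estimate on $|m_Q-\widehat m_{\fc}|$ valid for $E$ outside the support, of the form $\frac{1}{N(\kappa+\eta)}+\frac{1}{(N\eta)^2\sqrt{\kappa+\eta}}$, which for $\kappa\ge N^{-2/3+\epsilon}$ and suitable $\eta$ is genuinely $o\big((N\eta)^{-1}\big)$ and does yield the contradiction. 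With that replacement (and your Helffer--Sj\"ostrand counting for the lower bound, which is fine), the rigidity claim closes. Since the paper simply cites the result, this is a gap relative to a complete proof of the lemma, not relative to anything the paper does.
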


\subsection{Density of states}

 In this subsection, we explain how the distribution of the largest eigenvalues of~$Q$ can be related to~$m_Q(z)$ for appropriately chosen~$z$. The arguments given here are small modifications of the methods presented in~\cite{EYY,PY,LSY}.

 Recall the definition of the scaling factor $\gamma_0$ in \eqref{le gamma0}. We set
\begin{align}
 T\deq \gamma_0\Sigma\,,
\end{align}
and define
\begin{align}
\wt Q \deq X\adj TX\,.
 \end{align}
We denote by $m_{\wt Q}$ the averaged Green function of $\wt Q$, {\it i.e.,}
\begin{align}
m_{\wt Q} (z) \deq \frac{1}{N} \mathrm{Tr } (\wt Q - z)^{-1}\,,\qquad\qquad (z\in C^+)\,.
\end{align}
Let $\wt \mu_1 \geq \wt \mu_2 \geq \cdots \geq \wt \mu_N$ be the eigenvalues of $\wt Q$. Let $L_+ \deq \gamma_0 E_+$ and observe that from Lemma~\ref{lemma local law} we have
\beq
|\wt \mu_1 - L_+ |\prec N^{-2/3}\,.\nonumber
\eeq	
Thus, we may assume in~\eqref{eq:main} that $|s| \prec 1$.

Fix $E_*$ such that
\beq
E_* - L_+ \prec N^{-2/3}, \qquad\quad \lone(\mu_1 - E_*>0) \prec 0\,. \nonumber
\eeq
We note that the choice of $E_*$ guarantees that the probability of the event $\{\mu_1 > E_*\}$ is negligible. For $E$ satisfying
\beq \label{def_E}
|E- L_+| \prec N^{-2/3}\,,
\eeq
we let
\beq
\chi_E \deq \lone_{[E, E_*]}\,.\nonumber
\eeq
We also define the Poisson kernel, $\theta_{\eta}$, for $\eta > 0$,
\beq
\theta_{\eta} (x) \deq \frac{\eta}{\pi (x^2 + \eta^2)} = \frac{1}{\pi} \im \frac{1}{x - \ii \eta}\,.\nonumber
\eeq
Introduce a smooth cutoff function $K\,:\, \R \to \R$ satisfying
\beq \label{def_K}
K(x) =
	\begin{cases}
	1 & \text{ if } x \leq 1/9 \,,\\
	0 & \text{ if } x \geq 2/9\,.
	\end{cases}
\eeq
Let $\caN(E_1, E_2)$ be the number of the eigenvalues in $(E_1, E_2]$, {\it i.e.},
\beq
\caN(E_1,E_2) \deq |\{ \alpha\,:\, E_1 < \wt \mu_{\alpha} \leq E_2\}|\,,\nonumber
\eeq 
and define the density of states in the interval $(E_1,E_2]$ by
\beq
\frn(E_1,E_2) \deq \frac{1}{N} \caN(E_1,E_2)\,.\nonumber
\eeq
In order to estimate $\p (\wt \mu_1 \leq E)$, we consider the following approximations:
\begin{align}\label{edge_approx}
\p (\wt \mu_1 \leq E) = \E K( \caN (E, \infty) )& \simeq \E K( \caN (E, E_*) )\simeq \E K \left( N \int_E^{E_*} \im m_{\wt Q}(y + \ii \eta) \,\dd y \right)\,,
\end{align}
with $\eta\sim N^{-2/3-\epsilon'}$, for some small $\epsilon'>0$. The first approximation in \eqref{edge_approx} follows from Lemma~\ref{lemma local law}, the rigidity of the eigenvalues, and the second from
\beq
\caN (E, E_*) = \Tr \chi_E (H) \simeq \Tr \chi_E * \theta_{\eta} (H) = \frac{1}{\pi} N \int_E^{E_*} \im m_{\wt Q}(y + \ii \eta) \,\dd y\,.\nonumber
\eeq
The following lemma shows that the approximations in~\eqref{edge_approx} indeed hold.
\begin{lemma} \label{lem approx}
Suppose that $E$ satisfies \eqref{def_E}. For $\epsilon>0$, let $\ell \deq \frac{1}{2} N^{-2/3 - \epsilon}$ and $\eta \deq N^{-2/3 - 9\epsilon}$. Recall that $K$ is a smooth function satisfying \eqref{def_K}. Then, for any sufficiently small $\epsilon > 0$ and any (large) $D > 0$, we have
\begin{align*}
\Tr \left( \chi_{E + \ell} * \theta_{\eta} (H) \right) - N^{-\epsilon} \leq \frn (E, \infty) \leq \Tr \left( \chi_{E - \ell} * \theta_{\eta} (H) \right) + N^{-\epsilon}
\end{align*}
and
\begin{align*}
\E K \left( \Tr \left( \chi_{E - \ell} * \theta_{\eta} (H) \right) \right) \leq \p (\wt \mu_1 \leq E) \leq \E K \left( \Tr \left( \chi_{E + \ell} * \theta_{\eta} (H) \right) \right) + N^{-D}\,,
\end{align*}
for any sufficiently large $N \geq N_0 (\epsilon, D)$.
\end{lemma}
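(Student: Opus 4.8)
The plan is to follow the standard density-of-states argument from \cite{EYY,PY,LSY}, adapting it to the sample covariance matrix $\wt Q$ (writing $H$ for $\wt Q$ in the notation of the excerpt). The two displayed inequalities are logically linked: the first (the sandwiching of $\frn(E,\infty)$ between smoothed counting functions) is the deterministic/high-probability input, and the second (the sandwiching of $\p(\wt\mu_1\le E)$) follows from the first together with the monotonicity and support properties of the cutoff $K$.

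First I would establish the first chain of inequalities. The function $\chi_E*\theta_\eta$ is a smooth approximation to the indicator $\lone_{[E,E_*]}$, and the key point is a quantitative Helffer--Sj\"ostrand-type / direct estimate: for the test function $\chi_E*\theta_\eta$ one has
\beq
\Tr\big(\chi_E*\theta_\eta(H)\big) = \frac{1}{\pi} N \int_E^{E_*} \im m_{\wt Q}(y+\ii\eta)\,\dd y + (\text{error}),\nonumber
\eeq
and the smoothing at scale $\eta = N^{-2/3-9\epsilon}$ together with the shift by $\ell = \tfrac12 N^{-2/3-\epsilon} \gg \eta$ lets one replace the sharp cutoff at $E$ by smooth ones at $E\pm\ell$ at the cost of $N^{-\epsilon}$. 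Concretely, $\chi_{E+\ell}*\theta_\eta \le \lone_{[E,E_*]} + (\text{tail})$ pointwise up to an error controlled by $\eta/\ell \prec N^{-8\epsilon}$, and likewise from below with $\chi_{E-\ell}$; applying $\Tr(\,\cdot\,(H))$ and using the rigidity of eigenvalues near the edge (Lemma~\ref{lemma local law}, which gives $|\mu_1-L_+|\prec N^{-2/3}$ and, via the local law, control of $\caN$ on all relevant windows) converts these pointwise bounds into the claimed inequalities for $\frn(E,\infty)$ with the additive $N^{-\epsilon}$ slack. One also uses here that $\caN(E_*,\infty)\prec 0$ by the choice of $E_*$, so $\frn(E,\infty)=\frn(E,E_*)$ up to negligible error.

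Next I would deduce the probability inequalities. Since $K$ is nonincreasing with $K\equiv 1$ on $(-\infty,1/9]$ and $K\equiv 0$ on $[2/9,\infty)$, and since $\{\wt\mu_1\le E\}=\{\caN(E,\infty)=0\}$, one has the deterministic bound $\lone(\wt\mu_1\le E) = K(\caN(E,\infty))$ on the high-probability event where $\caN(E,\infty)\in\{0\}\cup[1,\infty)$ and never falls in $(1/9,2/9)$ (again guaranteed by rigidity, since eigenvalue gaps near the edge are $\gg N^{-\epsilon}$ in the relevant counting sense). Combining with the first part: on the good event,
\beq
K\big(N\,\frn(E,\infty)\big) \le K\big(N\,\Tr(\chi_{E+\ell}*\theta_\eta(H)) - N^{1-\epsilon}\big),\nonumber
\eeq
wait --- more precisely one uses monotonicity of $K$ in the direction that gives the stated one-sided bounds, together with the elementary fact that subtracting/adding $N^{-\epsilon}$ inside $K$ only changes the value on a set of exponentially small probability. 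Taking expectations and absorbing the negligible events into the $N^{-D}$ term yields
\beq
\E K\big(\Tr(\chi_{E-\ell}*\theta_\eta(H))\big) \le \p(\wt\mu_1\le E) \le \E K\big(\Tr(\chi_{E+\ell}*\theta_\eta(H))\big) + N^{-D}.\nonumber
\eeq

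The main obstacle I expect is the first part: controlling the error in replacing $\Tr(\chi_E*\theta_\eta(H))$ by the actual counting function $\caN(E,E_*)$ with an error that is $o(1)$ uniformly, which requires carefully using the local law on $\wt Q$ (Lemma~\ref{lemma local law}, transported from $Q$ via $T=\gamma_0\Sigma$) down to scale $\eta=N^{-2/3-9\epsilon}$ --- slightly below the natural $N^{-2/3}$ edge scale --- together with the square-root edge behavior of $\widehat\rho_{\fc}$ from Lemma~\ref{lem:square root for hat mfc} to estimate $\int \im\widehat m_{\fc}$ and hence the fluctuation of $\int\im m_{\wt Q}$. The bookkeeping of the three scales $\eta \ll \ell \ll N^{-2/3} \ll $ (window size) is where the exponents $9\epsilon$, $\epsilon$ are tuned, and this is essentially the content of the corresponding lemma in \cite{PY,LSY}; I would follow those proofs, checking that every step that used the Wigner/null-covariance local law goes through verbatim with the deformed local law of Lemma~\ref{lemma local law}.
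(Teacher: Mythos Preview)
Your proposal is correct and takes essentially the same approach as the paper: the paper's proof simply refers to Corollary~6.2 of \cite{PY} and notes that the required inputs---the local law estimate $|m_{\wt Q}-\widehat m_{\fc}|$ from Lemma~\ref{lemma local law} and the square-root edge behavior $\im \widehat m_{\fc}$ from Lemma~\ref{lem:square root for hat mfc}---are already available here. Your outline is a faithful unpacking of that reference, correctly identifying the scale hierarchy $\eta\ll\ell\ll N^{-2/3}$ and the role of rigidity in controlling $\caN(E_*,\infty)$.
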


\begin{proof}
We may follow the proof of Corollary 6.2 of \cite{PY}. Note that the estimates on $|m_{\wt Q} (E + \ii \ell) - \widehat m_{\fc} (E + \ii \ell)|$ and $\im \widehat m_{\fc} (E - \kappa + \ii \ell)$, which replace similar estimates with respect to $m_c$ in the proof of Corollary 6.2 in \cite{PY}, are already proved in Lemma~\ref{lem:square root for hat mfc} and Lemma~\ref{lemma local law}.
\end{proof}

\section{Green Function Comparison and Proof of the Main Result}

Having established Lemma~\ref{lem approx}, the proof of Theorem~\ref{thm main} directly follows from our main technical result: the Green function comparison theorem at the edge, Proposition~\ref{prop green} below. It compares the expectations of functions of the averaged Green functions of $\wt Q$ and $X^* X$. More precisely, we let
\begin{align}\label{le W definition}
W \deq \sqrt d \, (1 + \sqrt d)^{-4/3} X^* X\,,
\end{align}
and introduce
\begin{align*}
m_W (z) \deq \frac{1}{N} \mathrm{Tr } (W - z)^{-1}\,,\qquad\qquad(z\in\C^+)\,.
\end{align*}
It is well known that the distribution of the rescaled largest eigenvalue of $W$ converges to the Tracy-Widom distribution; see \cite{PY}. 

 Our main technical result is as follows. Recall that we write $L_+=\gamma_0 E_+$, with $E_+$ given in~\eqref{E+} and with $\gamma_0$ given in~\eqref{le gamma0}.

\begin{proposition}[Green function comparison] \label{prop green}
Let $\epsilon>0$ and set $\eta = N^{-2/3 - \epsilon}$. Denote by $M_+$ the upper edge of the Marchenko-Pastur law~$\rho_{\MP}$ for~$W = \sqrt d (1 + \sqrt d)^{-4/3} X^* X$. Let $E_1, E_2\in\R$ satisfy $E_1 < E_2$ and
\begin{align}\label{E1 and E2}
|E_1| \,, |E_2| \leq N^{-2/3 + \epsilon}\,.
\end{align}
Let $F : \R \to \R$ be a smooth function satisfying
\beq \label{F bound}
\max_x |F^{(\ell)}(x)| (|x|+1)^{-C} \leq C\,,\qquad \qquad \ell=1,2, 3, 4\,.
\eeq

Then, there exists a constant $\phi > 0$ such that, for any sufficiently large~$N$ and for any sufficiently small $\epsilon > 0$, we have
\begin{multline} \label{green_comp}
\bigg| \E F \bigg( N \int_{E_1}^{E_2} \im m_{\wt Q} (x + L_+ + \ii \eta) \,\dd x \bigg) - \E F \bigg( N \int_{E_1}^{E_2} \im m_W (x + M_+ + \ii \eta)\, \dd x \bigg) \bigg| \leq N^{-\phi}\,.
\end{multline}
\end{proposition}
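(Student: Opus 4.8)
The plan is to follow the continuous interpolation strategy of \cite{LSY}, adapted to sample covariance matrices. I would introduce a one-parameter family of matrices $Q_\theta \deq X^* T_\theta X$, $\theta\in[0,1]$, interpolating between $\wt Q = X^* T X$ (at $\theta=1$, with $T=\gamma_0\Sigma$) and, up to a scalar renormalization, $X^* X$ (at $\theta=0$). Concretely one can take $T_\theta$ to have eigenvalues obtained by flowing the eigenvalues of $T$ toward the constant $\sqrt d(1+\sqrt d)^{-4/3}$ in such a way that the relevant deformed Marchenko--Pastur edge $E_+(\theta)$ and the scaling $\gamma_0(\theta)$ vary smoothly; one could also, following \cite{LSY}, run an Ornstein--Uhlenbeck-type flow on the $\sigma_m$'s. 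Writing $\caF(\theta)\deq\E F\!\left(N\int_{E_1}^{E_2}\im m_{Q_\theta}(x+E_+(\theta)+\ii\eta)\,\dd x\right)$, the goal becomes to show $|\caF(1)-\caF(0)|\le N^{-\phi}$, and for this I would differentiate $\caF$ in $\theta$ and bound $\int_0^1|\caF'(\theta)|\,\dd\theta$.

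The derivative $\caF'(\theta)$ produces two kinds of terms: (i) terms from the explicit $\theta$-dependence of the shift $E_+(\theta)$ (and of $\gamma_0(\theta)$ if it enters), which after an integration by parts in $x$ and using the local law $|m_{Q_\theta}-\widehat m_{\fc,\theta}|\prec (N\eta)^{-1}$ together with the square-root edge behavior of Lemma~\ref{lem:square root for hat mfc} give a deterministic leading contribution; and (ii) terms from $\partial_\theta G_{Q_\theta}$, which by the resolvent identity equals $-G_{Q_\theta}(\partial_\theta Q_\theta)G_{Q_\theta}$ with $\partial_\theta Q_\theta = X^*(\partial_\theta T_\theta)X$. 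The point of choosing the interpolation and the renormalization $E_+(\theta)$, $\gamma_0(\theta)$ correctly is precisely that the leading (``self-energy'') part of (ii) cancels the deterministic part of (i); this is the analogue of the ``optical theorem'' cancellations referenced in the appendices. What remains is a fluctuation term that must be shown to be $O(N^{-\phi})$. For this I would expand $\E[G(\partial_\theta Q_\theta)G \cdots]$ using the independence of the entries of $X$ — a cumulant expansion (or the Stein-type identity in the Gaussian case for the Corollary) in the $x_{ij}$ — generating a hierarchy of terms that are controlled using: the entrywise local law $\max_{i,j}|(G_{Q_\theta})_{ij}-\delta_{ij}\widehat m_{\fc,\theta}|\prec\Psi$, the bound $\Psi\le CN^{-2/3+\epsilon}$ on the relevant spectral domain, the observation that each off-diagonal Green function entry carries a gain, and the fact that $F$ and its derivatives grow only polynomially (assumption~\eqref{F bound}) while $N\int_{E_1}^{E_2}\im m\,\dd x = O(N^{2/3}\cdot N^{-2/3+\epsilon}\cdot\eta\cdot N)$-type bookkeeping keeps the argument of $F$ of size $N^{O(\epsilon)}$.

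The main obstacle, as in \cite{LSY}, will be Step (ii): organizing the cumulant expansion of $\partial_\theta\E F(\cdots)$ so that after the designed cancellation of the order-one self-energy term, \emph{every} surviving term is genuinely smaller than $N^{-\phi}$ for some fixed $\phi>0$. This is delicate because naive power counting on individual terms is not good enough — one must exploit cancellations among families of terms (the ``optical theorem'' identities for $\widehat m_{\fc}$, of the schematic form $1 - \int (t\widehat m_{\fc})^2(1+t\widehat m_{\fc})^{-2}\dd\widehat\rho(t) \sim \sqrt{\kappa+\eta}$ near the edge) and track how many factors of $\eta$, $N^{-1/3}$, and $\Psi$ each aggregate carries. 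A secondary technical point is controlling the error from replacing $\caN(E,\infty)$ by the smoothed quantity $N\int\im m_{\wt Q}$ uniformly along the whole flow, but this is handled exactly as in Lemma~\ref{lem approx} since the local law of Lemma~\ref{lemma local law} holds for each $Q_\theta$ with uniform constants (Assumption~\ref{assumptions for local MP law} being stable under the interpolation). In the non-diagonal Gaussian case of Corollary~\ref{cor main}, the cumulant expansion collapses to a single integration-by-parts identity, which is why the diagonality hypothesis on $\Sigma$ can be dropped there.
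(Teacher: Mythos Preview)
Your high-level strategy---continuous interpolation in the population, differentiate the functional $\caF$, arrange a cancellation of the leading term coming from the moving edge $L_+(\cdot)$ against the leading ``self-energy'' term, then bound the remainder---is exactly the paper's plan. However, the technical vehicle you propose differs from the paper in a way that matters. You propose to expand $\E[G(\partial_\theta Q_\theta)G\cdots]$ by a cumulant expansion in the entries $x_{ij}$. The paper instead works throughout with the $(N+M)\times(N+M)$ linearization $H$ of Section~\ref{linear} and its Green function $G=H^{-1}$; the time derivative produces $\dot z\sum_a G_{ia}G_{ai}-\sum_\alpha(\partial_t t_\alpha)t_\alpha^{-2}G_{i\alpha}G_{\alpha i}$, and the latter is unfolded not by cumulants but by Schur/resolvent identities~\eqref{schur}--\eqref{basic resolvent} together with the partial expectation $\E_\alpha$ over a single row of $X$. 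This yields the ``decoupling'' Lemma~\ref{le first lemma}, which separates the $\alpha$-dependence into explicit scalar weights $(t_\alpha^{-1}-\tau)^{-k}$ multiplying the random variables $X_{22},X_{32},\ldots,X_{44}'$. The first cancellation is then the exact identity $\dot z=\frac{1}{N}\sum_\alpha(\partial_t t_\alpha)t_\alpha^{-2}(t_\alpha^{-1}-\tau)^{-2}$, which kills the $X_{22}$ contribution; the remaining $\caO(\Psi^3)$ and $\caO(\Psi^4)$ terms are reduced to $\caO(M\Psi^5)$ via the optical theorems of Appendix~\ref{sec:optical theorems} combined with the sum rules~\eqref{tau 2} and the coefficient identities of Lemma~\ref{le lemma coeffs}. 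Two points in your sketch should be corrected: (a) the optical theorems are identities among the \emph{random} quantities $\E[X_{32}+X_{33}]$, $\E[X_{42}+\cdots]$, not merely deterministic relations for $\widehat m_{\fc}$---the schematic formula you wrote down is not what drives the cancellation; (b) your explanation of Corollary~\ref{cor main} is off: the diagonality assumption is removed in the Gaussian case simply by orthogonally diagonalizing $\Sigma$ and using that $UX$ is again Gaussian with the same law, not because a cumulant expansion collapses.
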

We outline the proof of Proposition \ref{prop green} in the Appendices~\ref{sec:expansion of GiaGai},~\ref{sec:optical theorems} and~\ref{sec:proof of getting rid of the rest}.

\begin{remark}
Proposition \ref{prop green} can be extended as follows: Let $\epsilon>0$ and set $\eta = N^{-2/3 - \epsilon}$. Let $E_0, E_1, \cdots, E_k \in\R$ satisfy $E_1 < E_2 < \cdots < E_k$ and
\begin{align*}
|E_0| \leq N^{-2/3 + \epsilon}\,, \qquad |E_1 - E_0| \leq N^{-2/3 + \epsilon}\,, \quad \cdots \,, \quad |E_k - E_0| \leq N^{-2/3 + \epsilon}\,.\nonumber
\end{align*}

Let $F : \R^k \to \R$ be a smooth function satisfying
\beq
\max_x |F^{(\ell)}(x)| (|x|+1)^{-C} \leq C\,,\qquad \qquad \ell=1, 2, 3, 4\,.\nonumber
\eeq
Then, there exists a constant $\phi > 0$ such that, for any sufficiently large $N$ and for any sufficiently small $\epsilon > 0$, we have
\begin{multline*}
\Bigg| \E F \Bigg(\bigg( N \int_{E_i}^{E_0} \im m_{\wt Q} (x + L_+ + \ii \eta)\, \dd x \bigg)_{1\le i\le k} \Bigg) \\
- \E F \Bigg( \bigg( N \int_{E_i}^{E_0} \im m_W (x + M_+ + \ii \eta) \,\dd x\bigg)_{1\le i\le k}\Bigg) \Bigg| \leq N^{-\phi}\,. 
\end{multline*}
The proof of this statement is similar to that of Proposition~\ref{prop green} and will be omitted.

 Assuming the validity of Proposition~\ref{prop green}, we now prove our main results.

\end{remark}

\begin{proof}[Proof of Theorem \ref{thm main}]
We follow the proof of Theorem 1.10 of \cite{PY}. Let~$\mu_1^W$ be the largest eigenvalue of~$W$ (see~\eqref{le W definition}) and denote by $M_+$ the upper edge of the rescaled Marchenko-Pastur law $\rho_{\MP}$. We notice that the distribution of~$N^{2/3} (\mu_1^W - M_+)$ converges to the Tracy-Widom law $F_1$. (See \cite{So,Pe1,FS,PY}.) Thus, in order to prove~\eqref{eq:main}, it suffices to show that
\begin{align} \label{edge}
\p\, [N^{2/3} (\mu_1^{W} - M_+) \leq s] - N^{-\phi} &\leq \p \,[N^{2/3} \big( \wt \mu_1 - L_+ \big) \leq s] \nonumber\\ &\leq \p\, [N^{2/3} (\mu_1^{W} - M_+) \leq s] + N^{-\phi}\,,
\end{align}
for some $\phi > 0$.

Fix an $s$ satisfying $|s| \prec 1$, and let $E \deq L_+ + s N^{-2/3}$. Let~$\ell \deq \frac{1}{2} N^{-2/3 - \epsilon}$ and~$\eta\deq N^{-2/3 - 9\epsilon}$. For any sufficiently small $\epsilon > 0$, we have from Lemma~\ref{lem approx} that
\beq
\p (\wt \mu_1 \leq E) \geq \E \left[ K \left( \Tr \left( \chi_{E - \ell} * \theta_{\eta} (H) \right) \right) \right]\,.
\eeq
From Proposition \ref{prop green}, we find that
\beq
\E \left[ K \left( \Tr \left( \chi_{E - \ell} * \theta_{\eta} (H) \right) \right) \right] \geq \E \left[ K \left( \Tr \left( \chi_{E - (L_+ - M_+) - \ell} * \theta_{\eta} (W) \right) \right) \right] - N^{-\phi}\,,\nonumber
\eeq
for some $\phi > 0$. Finally, we have from Corollary 6.2 of \cite{PY} that
\beq
\E \left[ K \left( \Tr \left( \chi_{E - (L_+ - M_+) - \ell} * \theta_{\eta} (W) \right) \right) \right] \geq \p \left(\mu_1^{W} \leq E - (L_+ - M_+) \right) - N^{-\phi}\,.\nonumber
\eeq
Altogether, we have shown that
\beq
\p (\wt \mu_1 \leq E) \geq \p \left(\mu_1^W \leq E - (L_+ - M_+) \right) - 2 N^{-\phi}\,,\nonumber
\eeq
which proves the first inequality of \eqref{edge}. The second inequality can be proved similarly. 
\end{proof}

\begin{proof}[Proof of Corollary \ref{cor main}]
Let $U$ be an $M \times M$ orthogonal matrix that diagonalizes $\Sigma$, {\it i.e.}, there exists an $M \times M$ real diagonal matrix $D$ such that $\Sigma = U^* D U$. Then, $UX$ is a real random matrix with Gaussian entries, satisfying the assumptions in Definition~\ref{assumption sample}. Thus, applying Theorem \ref{thm main} with $X^* \Sigma X = (UX)^* D (UX)$, we get the desired result.
\end{proof}

\section{Linearization of $\widetilde Q$} \label{linear}

In this section, we recall a well-known formalism that simplifies the computations in the proof of Proposition~\ref{prop green} considerably. Instead of working with the product matrices $\widetilde Q=X\adj T X$ or $T^{1/2}XX\adj T^{1/2}$, we may ``linearize'' the problem by introducing an $(N+M)\times(N+M)$ matrix~$H$, whose respective entries are either $(x_{\alpha a})$, $(t^{-1}_\alpha)$,~$z$ or simply zero. The inverse of $H$ is then related to the Green function of $X\adj TX$, respectively of $T^{1/2}XX\adj T^{1/2}$, through Schur's complement formula or the Feshbach map. For similar applications in random matrix theory see, {\it e.g.},~\cite{A1,Gir}.

The linearization of $\widetilde Q$ is established in Subsection~\ref{section: schur complement}. In the Subsections~\ref{sec:Green function, minors and partial expectations},~\ref{sec:Green function identities} and~\ref{sec: local law for H at the edge}, we collect useful technical results on the inverse of~$H$.
\subsection{Schur complement}\label{section: schur complement}

Suppose that $X$ and $\Sigma$ satisfy the assumptions in Theorem \ref{thm main}. Let $z$ be as in the previous section. We define an $(N+M)\times (N+M)$ matrix $H$ as follows. Let $P$ be the projection on the first $N$ coordinates in $\C^{N+M}$ and set $\overline P\deq\lone-P$. Then, we write
\begin{align}\label{le H}
 H=PHP+PH\overline{P}+\overline PH P+\overline{P}H\overline P\,,
\end{align}
where
\begin{align*}
 PHP\deq-z\lone\,,\qquad PH\overline{P}\deq X\adj\,,\qquad \overline{P}HP\deq X\,,\qquad\overline{P} H \overline P\deq -T^{-1}\,,
\end{align*}
with $T = \gamma_0 \Sigma$. Note that $H(z)$ is invertible for $z\in\C^+$: Assuming that $\bsv$, $\bsv\not=0$, is in the kernel of~$H(z)$, $\im z>0$, and writing $\bsv_P\deq P\bsv$ and $\bsv_{\overline{P}}\deq \overline{P}\bsv$, we must have
\begin{align*}
-z \bsv_P + X^* \bsv_{\overline{P}} = 0\,, \qquad\qquad X \bsv_P - T^{-1} \bsv_{\overline{P}} = 0\,. 
\end{align*}
Thus,
\begin{align*}
X X^* \bsv_{\overline{P}} / z = T^{-1} \bsv_{\overline{P}}\,.
\end{align*}
Hence, taking the inner product with $\bsv_{\overline{P}}$, we find that the left side is not real while the right side is. We thus get a contradiction allowing us to conclude that $\bsv=0$.

We define the ``Green function'', $G$, of $H\equiv H(z)$ by
\begin{align}\label{le G}
 G(z)\deq H(z)^{-1}\,,\qquad\qquad (z\in\C^+)\,,
\end{align}
and the averages, $m$ and $\widetilde m$, of $G$ by
\begin{align}
 m(z)\deq \frac{1}{N}\sum_{a=1}^N G_{aa}(z)\,,\qquad \widetilde m(z)\deq\frac{1}{M}\sum_{\alpha=N+1}^{M+N}G_{\alpha\alpha}(z)\,,\qquad\quad (z\in\C^+)\,.
\end{align}
Note that by Schur's complement formula we have
\begin{align}\label{good resolvent}
 PG(z)P&=\frac{1}{PH(z)P-PH(z)\overline P\frac{1}{\overline P H(z)\overline P}\overline PH(z) P} =\frac{1}{-z P+X\adj T X}\,,
\end{align}
so that
$$
G_{ab}(z) = \left[ (\wt Q-z)^{-1} \right]_{ab}\,,
$$
for any $a, b \in \llbracket 1, N \rrbracket$. In particular,
$$
m(z) = m_{\wt Q}(z)\,.
$$
Also note that
\begin{align}\label{bad resolvent}
{z^{-1}}\overline PG(z)\overline P=\frac{{ z^{-1}}}{\overline{P}H\overline{P}-\overline{P}HP\frac{1}{PHP}P H\overline{P}}=\frac{1}{-zT^{-1}-XX\adj}\,.
\end{align}

In the following we use lowercase Roman letters for indices in $\llbracket 1,N\rrbracket$, Greek letters for indices in $\llbracket N+1,M+N\rrbracket$ and uppercase Roman letters for indices in $\llbracket 1,N+M\rrbracket$.

\subsection{Green function, minors and partial expectations}\label{sec:Green function, minors and partial expectations}
Recall the definitions of the $(N+M)\times (N+M)$ matrix $H\equiv H(z)$  in~\eqref{le H} and of the Green function $G$ in~\eqref{le G}.

Let~$\T\subset \llbracket1, N+M \rrbracket$. We then define $H^{(\T)}$ as the $(N+M-|\T|)\times(N+M-|\T|)$ minor of~$H$ obtained by removing all columns and rows of~$H$ indexed by~$\T$. We do not change the names of the indices of~$H$ when defining~$H^{(\T)}$. More specifically, we define an operation $\pi_{\mathrm A}$, $\mathrm A\in \llbracket1, N+M \rrbracket$, on the probability space by
\begin{align*}
 (\pi_\mathrm A(H))_{\mathrm {BC}}\deq\lone(\mathrm  B\not=\mathrm A)\lone(\mathrm C\not=\mathrm A)h_{\mathrm {BC}}\,.
\end{align*}
Then, for $\T\subset \llbracket1, N+M \rrbracket$, we set $\pi_{\T}\deq\prod_{\mathrm A\in\T}\pi_{\mathrm A}$ and define
\begin{align*}
 H^{(\T)}\deq((\pi_{\T}(H)_{\mathrm {BC}})_{\mathrm{B},\mathrm C\not\in\T}\,.
\end{align*}

The Green functions $G^{(\T)}$, are defined in an obvious way using~$H^{(\T)}$.  Moreover, we use the shorthand notations
\begin{align*}
 \sum_{a}^{(\T)}\deq\sum_{\substack{a=1\\a\not\in\T}}^N\,\,, \quad\sum_{a\not=b}^{(\T)}\deq\sum_{\substack{a=1,\, b=1\\ a\not=b\,,\,a,b\not\in\T}}^N\,,\quad  \sum_{\alpha}^{(\T)}\deq\sum_{\substack{\alpha=N+1\\ \alpha\not\in\T}}^{N+M}\,,\quad\sum_{\alpha\not=\beta}^{(\T)}\deq\sum_{\substack{\alpha=N+1,\, \beta=N+1\\ \alpha\not=\beta\,,\,\alpha,\beta\not\in\T}}^{N+M}\,\,,
\end{align*}
and abbreviate $(\mathrm{A})=(\{\mathrm{A}\})$, $(\T \mathrm{A})=(\T\cup\{\mathrm{A}\})$. In Green function entries $(G_{\mathrm{AB}}^{(\T)})$ we refer to $\{\mathrm{A},\mathrm{B}\}$ as lower indices and to $\T$ as upper indices.

We further set
\begin{align*}
 m^{(\T)}\deq\frac{1}{N}\sum_{a}^{(\T)}G_{aa}^{(\T)}\,,\qquad\qquad \widetilde m^{(\T)}\deq\frac{1}{M}\sum_{\alpha}^{(\T)}G_{\alpha\alpha}^{(\T)}\,.
\end{align*}
Note that we use the normalizations $N^{-1}$ and $M^{-1}$ here since they are more convenient in computations.

Finally, we denote by $\E_{a}$, $\E_{\alpha}$ the partial expectation with respect
to the variables $ (x_{\alpha a} )_{\alpha=M+1}^{N+M}$, respectively $(x_{\alpha a})_{a=1}^{ N}$.

\subsection{Green function identities}\label{sec:Green function identities}
The next lemma collects the main identities between the matrix elements of~$G$ and~$G^{(\T)}$. 

\begin{lemma}
Let $G\equiv G(z)$, $z\in\C^+$, be defined in~\eqref{le G}. Assume that the matrix $T$ is diagonal. Then, for $a,b\in\llbracket 1,N\rrbracket$, $\alpha,\beta\in\llbracket N+1,N+M\rrbracket$, $\mathrm{A},\mathrm{B},\mathrm {C}\in\llbracket 1,N+M\rrbracket$, the following identities hold: 
\begin{itemize}
 \item[-] {\it Schur complement/Feshbach formula:}\label{feshbach} For any~$a$ and~$\alpha$,
\begin{align}\label{schur} 
 G_{aa}&=\frac{1}{z-\sum_{\alpha,\beta}{x_{\alpha a} G_{\alpha\beta}^{(a)}}x_{\beta a}}\,,\nonumber\\  G_{\alpha\alpha}&=\frac{1}{-(T^{-1})_{\alpha\alpha}-\sum_{a,b}x_{\alpha a}G_{ab}^{(\alpha)}x_{\alpha b}}\,.
\end{align}

\item[-] For $a\not=b$,
\begin{align}\label{onesided}
 G_{ab}=-G_{aa}\sum_{\alpha}x_{\alpha a}G_{\alpha b}^{(a)}=-G_{bb}\sum_{\beta} G_{a\beta}^{(b)} x_{\beta b}\,.
\end{align}

\item[-] For $\alpha\not=\beta$,
\begin{align}\label{onesided greek}
 G_{\alpha\beta}=-G_{\alpha\alpha}\sum_{a}x_{\alpha a}G_{a \beta}^{(\alpha)}=-G_{\beta\beta}\sum_{b} G_{b\alpha}^{(\beta)} x_{\beta b}\,.
\end{align}

\item[-] For any $a$ and $\alpha$,
\begin{align}\label{onesided mixed}
 G_{a\alpha}=-G_{aa}\sum_{\beta}x_{\beta a}G_{\beta \alpha}^{(a)}=-G_{\alpha\alpha}\sum_{b} G_{ab}^{(\alpha)} x_{\alpha b}\,.
\end{align}

\item[-] For $\mathrm{A},\mathrm{B}\not=\mathrm{C}$,
\begin{align}\label{basic resolvent}
 G_{\mathrm{AB}}=G_{\mathrm{AB}}^{(\mathrm{C})}+\frac{G_{\mathrm{AC}}G_{\mathrm{CB}}}{G_{\mathrm{CC}}}\,.
\end{align}

\item[-] {\it Ward identity:} For any $a$,
\begin{align}\label{Ward}
 \sum_{b}|G_{ab}|^2=\frac{\im G_{aa}}{\eta}\,.
\end{align}

\end{itemize}
\end{lemma}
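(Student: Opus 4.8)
The plan is to derive all six identities as purely algebraic consequences of three ingredients: the block structure \eqref{le H} of $H$; the invertibility of $H(z)$ for $z\in\C^+$ established above; and the hypothesis that $T$ (hence $T^{-1}$) is diagonal, so that \emph{both} diagonal blocks $PHP=-z\lone$ and $\overline P H\overline P=-T^{-1}$ of $H$ are diagonal matrices. First I would record the standard Schur/Feshbach identities valid for \emph{any} invertible matrix $H$ with inverse $G$: for any index $\mathrm A$,
\[
G_{\mathrm{AA}} = \Bigl( H_{\mathrm{AA}} - \sum_{\mathrm B,\mathrm C\ne\mathrm A} H_{\mathrm{AB}}\, G^{(\mathrm A)}_{\mathrm{BC}}\, H_{\mathrm{CA}} \Bigr)^{-1},
\]
and for $\mathrm A\ne\mathrm B$,
\[
G_{\mathrm{AB}} = -G_{\mathrm{AA}}\sum_{\mathrm C\ne\mathrm A} H_{\mathrm{AC}}\, G^{(\mathrm A)}_{\mathrm{CB}} = -G_{\mathrm{BB}}\sum_{\mathrm C\ne\mathrm B} G^{(\mathrm B)}_{\mathrm{AC}}\, H_{\mathrm{CB}},
\]
together with the minor expansion $G_{\mathrm{AB}} = G^{(\mathrm C)}_{\mathrm{AB}} + G_{\mathrm{AC}}G_{\mathrm{CB}}/G_{\mathrm{CC}}$ for $\mathrm A,\mathrm B\ne\mathrm C$. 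The last of these is \eqref{basic resolvent} verbatim, and all three follow from $2\times 2$ block inversion (with an induction on $|\T|$ for passing to higher-order minors), which I would cite rather than reprove.

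Next I would specialize these to the block structure of $H$. The key observation is that, since $PHP$ and $\overline P H\overline P$ are diagonal, the only nonzero off-diagonal entries of $H$ incident to an index $a\in\llbracket 1,N\rrbracket$ are $H_{a\alpha}=(X\adj)_{a\alpha}=x_{\alpha a}$ and $H_{\alpha a}=X_{\alpha a}=x_{\alpha a}$ with $\alpha\in\llbracket N+1,N+M\rrbracket$, and symmetrically the only nonzero off-diagonal entries incident to an index $\alpha$ are the same $x_{\alpha a}$. Substituting the diagonal values of $H$ (the entry $H_{aa}$ coming from $-z\lone$, and $H_{\alpha\alpha}=-(T^{-1})_{\alpha\alpha}$) into the general $G_{\mathrm{AA}}$ formula and discarding the vanishing ``same-block'' sums gives precisely \eqref{schur}. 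Feeding the same vanishing information into the off-diagonal formula, with the expansion taken around $a$ (respectively around $b$, $\alpha$, $\beta$) and the surviving index $\mathrm C$ ranging only over the complementary block, produces \eqref{onesided}, \eqref{onesided greek} and \eqref{onesided mixed} according to whether the two free indices are Roman or Greek. This step is entirely mechanical; the only point needing care is the bookkeeping of which entries of $H$ vanish, i.e.\ genuinely using diagonality of $T$ and of $z\lone$ so that no spurious same-block terms survive in the Schur complements.

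Finally, for the Ward identity \eqref{Ward} I would \emph{not} work with $H$ directly, since $H(z)$ is not of the form ``self-adjoint operator minus $z$''. Instead I would invoke \eqref{good resolvent}, by which $G_{ab}=\bigl[(\wt Q-z)^{-1}\bigr]_{ab}$ for $a,b\in\llbracket 1,N\rrbracket$ with $\wt Q=X\adj T X$ self-adjoint; then
\[
\sum_{b=1}^N |G_{ab}|^2 = \bigl[(\wt Q-z)^{-1}(\wt Q-\bar z)^{-1}\bigr]_{aa} = \frac{1}{\eta}\,\im\bigl[(\wt Q-z)^{-1}\bigr]_{aa} = \frac{\im G_{aa}}{\eta},
\]
the middle equality being the resolvent identity $(\wt Q-z)^{-1}(\wt Q-\bar z)^{-1} = (2\ii\eta)^{-1}\bigl((\wt Q-z)^{-1}-(\wt Q-\bar z)^{-1}\bigr)$, valid because $\wt Q\adj=\wt Q$. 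I do not anticipate any real obstacle here: the whole lemma is a catalogue of elementary linear-algebra identities, and the only substantive structural input is the diagonality assumption on $T$, which enters exactly at the specialization of the generic Schur formulas in the second step.
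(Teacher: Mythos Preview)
Your proposal is correct and in fact more detailed than the paper's own treatment: the paper does not prove this lemma at all but simply writes ``For a proof we refer to, {\it e.g.},~\cite{EKYY1}.'' Your outline---generic Schur/Feshbach identities for an invertible matrix, specialization using the diagonality of both $PHP=-z\lone$ and $\overline P H\overline P=-T^{-1}$ so that only cross-block entries survive, and the Ward identity via the self-adjoint resolvent $(\wt Q-z)^{-1}$ from~\eqref{good resolvent}---is exactly the standard derivation that the cited reference contains.
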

For a proof we refer to, {\it e.g.},~\cite{EKYY1}.

\subsection{Local law for $H$ at the edge}\label{sec: local law for H at the edge}

Consider two families of random variables $(X_i)$ and $(Y_i)$, with $i\in\llbracket 1,N\rrbracket$,  satisfying
\begin{align}\label{assumption for large deviation}
\E Z_i = 0\,, \qquad\quad \E|Z_i|^2 = 1 \,,\qquad\quad \E |Z_i|^p\le c_p\,,\qquad\quad(p\ge 3)\,,
\end{align}
$Z_i=X_i,Y_i$, for all $p\in\N$ and some constants $c_p$, uniformly in $i\in\llbracket 1,N\rrbracket$.  The following lemma, taken from~\cite{EKYY4}, provides useful large deviation estimates.
\begin{lemma}\label{lemma large deviation estimates}
 Let $(X_i)$ and $(Y_i)$ be independent families of random variables and let $(a_{ij})$ and $(b_i)$, $i, j \in\llbracket 1, N\rrbracket$, be families of complex numbers. Suppose that all entries $(X_i)$ and $(Y_i)$
are independent and satisfy~\eqref{assumption for large deviation}. Then we have the bounds:
\begin{align}
\left|\sum_i b_iX_i\right|&\prec \left(\sum_i |b_i|^2\right)^{1/2}\,,\label{LDE 1}\\
\left|\sum_{i}\sum_{j}a_{ij} X_iY_j\right|&\prec\left(\sum_{i,j}|a_{ij}|^2 \right)^{1/2}\label{LDE 2}\,,\\
\left|\sum_{i}\sum_{j}a_{ij} X_iX_j-\sum_{i}a_{ii}\right|&\prec\left(\sum_{i,j}|a_{ij}|^2 \right)^{1/2}\label{LDE 3}\,.
\end{align}
If the coefficients $a_{ij}$ and $b_i$ depend on an additional parameter~$u$, then all of these estimates are uniform
in~$u$, {\it i.e.,} the threshold $N_0 = N_0 (\epsilon, D)$ in the definition of~$\prec$ depends only on the family~$(c_p)$ from~\eqref{assumption for large deviation}; in particular,~$N_0$ does not depend on~$u$.
\end{lemma}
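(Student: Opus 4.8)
The plan is to prove all three estimates by the high-moment method. To establish $|A| \prec \Phi$ for a deterministic quantity $\Phi$ it suffices to show that for every fixed $p \in \N$ one has $\E|A|^{2p} \le C_p \Phi^{2p}$ with $C_p$ depending only on $p$ and the constants $(c_p)$ of \eqref{assumption for large deviation}: Markov's inequality then yields $\P(|A| > N^\epsilon \Phi) \le C_p N^{-2p\epsilon} \le N^{-D}$ as soon as $p > D/(2\epsilon)$, and since $C_p$ does not depend on the auxiliary parameter $u$ the threshold $N_0(\epsilon, D)$ is uniform in $u$. Thus everything reduces to moment bounds for the three forms.

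For \eqref{LDE 1}, by homogeneity assume $\sum_i |b_i|^2 = 1$, hence $|b_i| \le 1$. Expanding
\begin{align*}
\E\Bigl|\sum_i b_i X_i\Bigr|^{2p} = \sum_{i_1,\dots,i_{2p}} b_{i_1}\cdots b_{i_p}\,\overline{b_{i_{p+1}}}\cdots\overline{b_{i_{2p}}}\;\E\bigl[X_{i_1}\cdots X_{i_p}\overline{X_{i_{p+1}}}\cdots\overline{X_{i_{2p}}}\bigr]\,,
\end{align*}
independence and $\E X_i = 0$ force every value among $i_1,\dots,i_{2p}$ to occur at least twice for a term to survive. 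Grouping the $2p$ slots by the induced partition, a block of size $k \ge 2$ contributes an expectation bounded by $c_k$ (H\"older) and a factor $\sum_i |b_i|^k \le (\sum_i|b_i|^2)^{k/2} = 1$; since the number of partitions of $\llbracket 1,2p\rrbracket$ with all blocks of size $\ge 2$ depends only on $p$, we obtain $\E|\sum_i b_i X_i|^{2p} \le (C_p)^{2p}$.

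For \eqref{LDE 2}, normalize $\sum_{i,j}|a_{ij}|^2 = 1$ (so $|a_{ij}| \le 1$) and expand $\E|\sum_{i,j} a_{ij} X_i Y_j|^{2p}$ as a sum over multi-indices $(\mathbf i,\mathbf j)$ of length $2p$; using independence of $(X_i)$ from $(Y_j)$ and centredness, only configurations in which every value of $\mathbf i$ and every value of $\mathbf j$ appears at least twice contribute. Bounding the $X$- and $Y$-expectations by products of the $c_k$, the problem reduces to showing $\sum_{\mathbf i,\mathbf j}\prod_r |a_{i_r j_r}| = O(1)$ over such configurations; this follows by repeated Cauchy--Schwarz, contracting one $\ell^2$-summation at each coincidence of indices and using the row and column bounds $\sum_j |a_{ij}|^2 \le 1$, $\sum_i |a_{ij}|^2 \le 1$ together with $|a_{ij}| \le 1$, the number of admissible configurations again being controlled by $p$ alone. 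Estimate \eqref{LDE 3} is the specialization $Y_i = X_i$: the subtracted term $\sum_i a_{ii}$ exactly cancels the diagonal contribution $\sum_i a_{ii}\E|X_i|^2$, after which the surviving configurations are of the off-diagonal type treated above.

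The main obstacle is the combinatorial estimate underlying \eqref{LDE 2}--\eqref{LDE 3}: showing that the constrained sum $\sum \prod|a_{i_r j_r}|$ stays $O(1)$ while using only the $\ell^2$-normalization of the rows and columns of $(a_{ij})$. It is cleanest to encode the surviving index configurations as multigraphs in which every vertex has degree at least two, and then to remove $\ell^2$-contractions edge by edge; counting these multigraphs (a function of $p$ only) and keeping all constants uniform in the parameter $u$ is the delicate bookkeeping. Since the lemma is quoted from \cite{EKYY4}, one may alternatively appeal directly to that reference.
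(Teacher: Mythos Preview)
The paper does not prove this lemma at all; it is introduced as ``taken from~\cite{EKYY4}'' and quoted without argument. Your high-moment approach is exactly the method used in~\cite{EKYY4}, so in substance you are reproducing the original proof rather than offering an alternative route.

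Two small points on the sketch. Your argument for~\eqref{LDE 1} is complete. For~\eqref{LDE 2} you correctly flag the multigraph bookkeeping as the real work and defer to~\cite{EKYY4}; that is fine. Your reduction of~\eqref{LDE 3} to ``the specialisation $Y_i=X_i$'' is a little too quick: once the two families coincide, the factorisation $\E[\,\cdots\,]_X\,\E[\,\cdots\,]_Y$ used in~\eqref{LDE 2} is no longer available, and an index can sit simultaneously in an $i$-slot and a $j$-slot, so ``surviving configurations are of the off-diagonal type'' is not literally true. The clean fix is to split
\[
\sum_{i,j}a_{ij}X_iX_j-\sum_i a_{ii}=\sum_{i\neq j}a_{ij}X_iX_j+\sum_i a_{ii}\bigl(|X_i|^2-1\bigr)\,;
\]
the second piece is a linear form in the centred variables $|X_i|^2-1$ (whose moments are controlled by the $c_k$) and falls under~\eqref{LDE 1} with bound $(\sum_i|a_{ii}|^2)^{1/2}\le(\sum_{i,j}|a_{ij}|^2)^{1/2}$, while the first is handled by the same graph argument as~\eqref{LDE 2}, now allowing row--column coincidences. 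None of this changes the conclusion, and since you already point to~\cite{EKYY4}, your proposal is adequate for the paper's purposes.
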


From the large deviation estimates in Lemma~\ref{lemma large deviation estimates} and the local law in Lemma~\ref{lemma local law}, we obtain the following estimates.
\begin{lemma}\label{local law Greek}
Let $G \equiv G(z) $, $z\in\C^+$, be defined in~\eqref{le G}. Suppose that~$T$ is diagonal, {\it i.e.},  $T=\diag(t_\alpha)$. Then, under Assumption \ref{assumptions for local MP law}, the Green function~$G$ satisfies the following bounds uniformly in~$z$ on~$\caD(\epsilon,c)$ (with $\epsilon, c>0$ as in Lemma~\ref{lemma local law}):
\begin{itemize}
\item[$i$.] For any $\alpha \in \llbracket N+1, N+M \rrbracket$,
\begin{align}
|G_{\alpha\alpha}(z)| \prec 1\,, \qquad\qquad \im G_{\alpha\alpha}(z) \prec \Psi\,.
\end{align}

\item[$ii$.] For any $a \in \llbracket 1, N \rrbracket$ and $\alpha \in \llbracket N+1, N+M \rrbracket$,
\begin{align}
|G_{a \alpha}(z)| \prec \Psi\,.
\end{align}

\item[$iii$.] For any $\alpha, \beta \in \llbracket N+1, N+M \rrbracket$ with $\alpha \neq \beta$,
\begin{align}
|G_{\alpha \beta}(z)| \prec \Psi\,.
\end{align}

\end{itemize}

\end{lemma}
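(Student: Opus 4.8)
The plan is to derive the Greek-index bounds from the local law for the Roman block of $G$. By \eqref{good resolvent} that block is the resolvent of $\wt Q=X\adj TX$, so Lemma~\ref{lemma local law} controls it after the harmless rescaling $z\mapsto z/\gamma_0$ (recall $\wt Q=\gamma_0Q$); accordingly $\widehat m_{\fc}$, $\Psi$ and $\caD(\epsilon,c)$ below refer to this $\wt Q$-version, so that $-\widehat m_{\fc}(L_+)=\gamma_0^{-1}\xi_+$ with $L_+=\gamma_0 E_+$. The only tools needed are the identities of the preceding lemma, the large deviation bounds of Lemma~\ref{lemma large deviation estimates} (applied to the unit-variance families $(\sqrt N x_{\alpha a})_a$, resp.\ $(\sqrt N x_{\alpha a})_\alpha$, each independent of the relevant minor), and the Ward identity \eqref{Ward}. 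One works with the minors $G^{(\T)}$, $|\T|$ bounded: since $H^{(\T)}$ has the same block structure as $H$ (with $X$, $T$ replaced by submatrices) and the equation \eqref{hat mfc} is stable under finite-rank perturbations of $\widehat\rho$, the Roman-block estimates of Lemma~\ref{lemma local law}, and inductively also the estimates below, hold for every $G^{(\T)}$; equivalently one runs a short induction on $|\T|$, peeling off indices via \eqref{basic resolvent}.

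First I would bound $|G_{\alpha\alpha}|\prec1$. By the Feshbach formula \eqref{schur}, $G_{\alpha\alpha}^{-1}=-(T^{-1})_{\alpha\alpha}-\sum_{a,b}x_{\alpha a}G^{(\alpha)}_{ab}x_{\alpha b}$. By \eqref{LDE 3}, $\sum_{a,b}x_{\alpha a}G^{(\alpha)}_{ab}x_{\alpha b}=m^{(\alpha)}(z)+\caO\big(N^{-1}(\sum_{a,b}|G^{(\alpha)}_{ab}|^2)^{1/2}\big)$, and the Ward identity for $H^{(\alpha)}$ gives $\sum_{a,b}|G^{(\alpha)}_{ab}|^2=\sum_b\eta^{-1}\im G^{(\alpha)}_{bb}\prec N\eta^{-1}\im\widehat m_{\fc}$, so the error is $\caO(\Psi)$ and $G_{\alpha\alpha}^{-1}=-(T^{-1})_{\alpha\alpha}-\widehat m_{\fc}(z)+\caO(\Psi)$. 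The deterministic part is bounded below in modulus: for $\re z\ge L_+$ one has $-\re\widehat m_{\fc}(z)\in[0,\gamma_0^{-1}\xi_+]$ while $(T^{-1})_{\alpha\alpha}=(\gamma_0\sigma_\alpha)^{-1}\ge(\gamma_0\sigma_1)^{-1}$, so $|(T^{-1})_{\alpha\alpha}+\widehat m_{\fc}(z)|\ge(\gamma_0\sigma_1)^{-1}(1-\sigma_1\xi_+)\gtrsim1$ by the sub-criticality \eqref{Sigma assumption}; for $\re z$ slightly below $L_+$ the square-root behaviour in Lemma~\ref{lem:square root for hat mfc} keeps $\widehat m_{\fc}(z)$ within $\caO(\sqrt c)$ of $-\gamma_0^{-1}\xi_+$, so the bound survives for $c$ small (a nonzero $\im\widehat m_{\fc}$ only helps). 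Thus $|G_{\alpha\alpha}|\prec1$.

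Next I would treat the off-diagonal entries. From \eqref{onesided mixed}, $G_{a\alpha}=-G_{\alpha\alpha}\sum_b G^{(\alpha)}_{ab}x_{\alpha b}$, so \eqref{LDE 1} gives $\big|\sum_b G^{(\alpha)}_{ab}x_{\alpha b}\big|\prec\big(N^{-1}\sum_b|G^{(\alpha)}_{ab}|^2\big)^{1/2}$; the Ward identity for $H^{(\alpha)}$ gives $\sum_b|G^{(\alpha)}_{ab}|^2=\eta^{-1}\im G^{(\alpha)}_{aa}\prec\eta^{-1}\im\widehat m_{\fc}(z)$, and since $\im\widehat m_{\fc}/(N\eta)\le\Psi^2$ by \eqref{le definition of the psi}, together with the previous step $|G_{a\alpha}|\prec|G_{\alpha\alpha}|\Psi\prec\Psi$. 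For $\alpha\ne\beta$, \eqref{onesided greek} gives $G_{\alpha\beta}=-G_{\alpha\alpha}\sum_a x_{\alpha a}G^{(\alpha)}_{a\beta}$, and applying \eqref{LDE 1} with the bound just obtained for the minor, $|G^{(\alpha)}_{a\beta}|\prec\Psi$, yields $|G_{\alpha\beta}|\prec|G_{\alpha\alpha}|\Psi\prec\Psi$. Finally, since $\im H=-\eta P$ one has $\im G=\eta\,GPG^{*}\succeq0$, hence $\im G_{\alpha\alpha}=\eta\sum_{b=1}^N|G_{\alpha b}|^2$, which is $\prec\Psi$ by the mixed bound just proved.

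The step I expect to be the main obstacle is the deterministic lower bound $|(T^{-1})_{\alpha\alpha}+\widehat m_{\fc}(z)|\gtrsim1$ inside the first step: this is the unique place where the sub-criticality hypothesis \eqref{Sigma assumption} is indispensable, and making it uniform up to the edge $L_+$ requires the square-root estimate of Lemma~\ref{lem:square root for hat mfc}. The remaining difficulty is organizational, namely the passage to the minors $G^{(\T)}$ and the attendant mild self-consistency ($m^{(\alpha)}\approx\widehat m_{\fc}$, $\im G^{(\alpha)}_{aa}\approx\im\widehat m_{\fc}$), which the induction on $|\T|$ handles.
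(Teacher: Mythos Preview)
Your argument is correct and follows the paper's proof closely: Schur for $G_{\alpha\alpha}$ combined with \eqref{LDE 3}, the Ward identity, and the sub-criticality assumption \eqref{Sigma assumption} for the deterministic lower bound on $|G_{\alpha\alpha}^{-1}|$, then \eqref{onesided mixed}, \eqref{onesided greek} together with \eqref{LDE 1} for the off-diagonal entries. The only cosmetic differences are that the paper obtains $m^{(\alpha)}\approx m$ via Cauchy eigenvalue interlacing (instead of invoking a local law for minors), and bounds $\im G_{\alpha\alpha}$ by taking the imaginary part of the Schur formula, $\im G_{\alpha\alpha}\prec\im m^{(\alpha)}+\Psi$, rather than via your Ward-type identity $\im G_{\alpha\alpha}=\eta\sum_b|G_{\alpha b}|^2$; both routes yield the same estimate.
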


\begin{proof}
From Schur's complement formula~\eqref{schur}, we obtain that
\begin{align*}
\frac{1}{G_{\alpha\alpha}} = -t_{\alpha}^{-1} - \sum_{k, l} x_{\alpha k} G^{(\alpha)}_{kl} x_{\alpha l}\,.
\end{align*}
Further, from the large deviation estimate~\eqref{LDE 3} and the Ward identity~\eqref{Ward}, we find
\begin{align}\label{bound large deviation with greek}
\left| m^{(\alpha)} - \sum_{k, l} x_{\alpha k} G^{(\alpha)}_{kl} x_{\alpha l} \right| \prec 
\sqrt{ \frac{\im m^{(\alpha)}}{N\eta}} \prec \Psi\,,
\end{align}
uniformly in $z\in\caD(\epsilon,c)$.

We next claim that $|m - m^{(\alpha)}| \prec \Psi$. To prove this claim, we first let
$$
\wt \caQ \deq T^{1/2} X X^* T^{1/2}\,.
$$
We notice that the averaged Green function $m$ can be written in terms of $\wt\caQ$ by
\begin{align*}
m(z) = m_{\wt \caQ} (z) = \frac{1}{N} \left( \Tr (\wt \caQ - z )^{-1} + \frac{N-M}{z} \right)\,.
\end{align*}
Next, we consider the minor $\wt\caQ^{(\alpha)}$, which is obtained by removing all columns and rows of $\wt\caQ$ indexed by~$\alpha$. Then,
$$
m^{(\alpha)}(z) = \frac{1}{N} \left( \Tr (\wt\caQ^{(\alpha)} - z )^{-1} + \frac{N-M+1}{z} \right)\,.
$$
By Cauchy's eigenvalue interlacing property, we get
$$
\big| \Tr (\wt\caQ - z )^{-1} - \Tr (\wt\caQ^{(\alpha)} - z )^{-1} \big| \leq C \eta^{-1}\,.
$$
(See the proof of Lemma 5.1 in \cite{BPZ2}.) This proves the desired claim.

Since $|\widehat \xi_+ + m| \prec \Psi$ and $t_{\alpha}^{-1} \ge \widehat\gamma_0(1+c) \widehat \xi_+$ for some $c > 0$ (see~\eqref{Sigma assumption}), we get $1\prec|G_{\alpha\alpha}|^{-1}$, hence $|G_{\alpha\alpha}| \prec 1$. Moreover, using once more Schur's complement formula~\eqref{schur}, we have
\begin{align*}
|\im G_{\alpha\alpha}| \prec \left| \im \sum_{k, l} x_{\alpha k} G^{(\alpha)}_{kl} x_{\alpha l} \right| \prec \im m^{(\alpha)} + \left| m^{(\alpha)} - \sum_{k, l} x_{\alpha k} G^{(\alpha)}_{kl} x_{\alpha l} \right| \prec \Psi\,,
\end{align*}
uniformly in $\caD(\epsilon,c)$, where we used~\eqref{bound large deviation with greek} and that $\im m^{(\alpha)}\prec \im m_{\fc}\sim \sqrt{\kappa+\eta}$.
This proves statement~$i$.

From the Green function identity~\eqref{onesided mixed} and statement~	$i$, we have
\begin{align*}
|G_{a \alpha}| = \left| G_{\alpha\alpha} \sum_k x_{\alpha k} G^{(\alpha)}_{ka} \right| \prec \left| \sum_k x_{\alpha k} G^{(\alpha)}_{ka} \right|\,,
\end{align*}
where we used the local law of Lemma~\ref{lemma local law}. Thus, applying the large deviation estimate~\eqref{LDE 1} and the local law $|G^{(\alpha)}_{ka}| \prec \Psi$, we get
\begin{align*}
|G_{a \alpha}| \prec \left(\frac{1}{N} \sum_k |G^{(\alpha)}_{ka}|^2 \right)^{1/2} \prec \Psi\,,
\end{align*}
uniformly in $z\in\caD(\epsilon,c)$, which proves statement~$ii$ of the lemma.

Similarly, we have from the Green function identity~\eqref{onesided greek} that
\begin{align*}
|G_{\alpha\beta}| = \left| G_{\alpha\alpha} \sum_k x_{\alpha k} G^{(\alpha)}_{k \beta} \right| \prec \left| \sum_k x_{\alpha k} G^{(\alpha)}_{k \beta} \right| \prec \left(\frac{1}{N} \sum_k |G^{(\alpha)}_{k \beta}|^2 \right)^{1/2} \prec \Psi\,,
\end{align*}
where we used $|G_{k\beta}|\prec \Psi$ to get the last inequality. This proves statement~$iii$ of the lemma.
\end{proof}

We conclude this section by giving estimates on expectations of monomials of Green functions entries.
\begin{lemma}
Let $P\equiv P(z)$ be a monomial in the Green function entries $(G_{AB}(z))$, with $z\in\caD(\epsilon,c)$, for some $\epsilon,c>0$. Then, there exists a universal constant $C$, such that
\begin{align}
 \E|P(z)|^2\prec N^{Cn}\,,
\end{align}
where $n$ is the degree of $P$. In particular, if $|P(z)|\prec \Psi(z)^k$, uniformly in $\caD(\epsilon,c)$, then $\E|P(z)|\prec\Psi(z)^k$, uniformly in $\caD(\epsilon,c)$. (See the paragraph after Definition~\ref{definition of stochastic domination}.)

Moreover, the same conclusions hold with $G^{(\T)}$ replacing $G$ for any $\T$.
\end{lemma}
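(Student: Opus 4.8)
The plan is to reduce the whole statement to a single deterministic (conditional on $X$) estimate on the operator norm of $G(z)=H(z)^{-1}$, namely that on $\caD(\epsilon,c)$ there is a constant $C_0$, depending only on $\|T\|=\gamma_0\sigma_1=O(1)$, such that
\[
\|G(z)\|=\|H(z)^{-1}\|\le\frac{C_0\,(1+\|X\|)^2}{\eta}\,.
\]
To prove this I would fix a unit vector $v\in\C^{N+M}$, set $w:=H(z)v$, and split $v=v_P+v_{\overline{P}}$, $w=w_P+w_{\overline{P}}$ along $P,\overline{P}$. The definition~\eqref{le H} of $H$ reads $w_P=-zv_P+X^*v_{\overline{P}}$ and $w_{\overline{P}}=Xv_P-T^{-1}v_{\overline{P}}$, so $v_{\overline{P}}=T(Xv_P-w_{\overline{P}})$, and substituting into the first equation gives $(\wt Q-z)v_P=w_P+X^*Tw_{\overline{P}}$. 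Since $\wt Q=X^*TX\ge0$ one has $\|(\wt Q-z)^{-1}\|\le\eta^{-1}$, and using $\|T\|=O(1)$ and $\eta\le1$ this yields in turn $\|v_P\|\le C_0(1+\|X\|)\eta^{-1}\|w\|$ and then $\|v_{\overline{P}}\|\le C_0(1+\|X\|)^2\eta^{-1}\|w\|$; as $\|v\|=1$, this is the asserted bound. The identical computation applies to every minor $H^{(\T)}$: its blocks are $-zI$, a submatrix of $X$ (of operator norm $\le\|X\|$) and a principal submatrix of $-T^{-1}$ whose inverse still has operator norm $\le\|T\|$ (by eigenvalue interlacing) and whose Schur complement $\wt Q^{(\T)}$ is still positive semidefinite, so $\|G^{(\T)}(z)\|\le C_0(1+\|X\|)^2/\eta$ as well.

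Granting this, since $|G_{AB}(z)|\le\|G(z)\|$ for all $A,B$, any monomial of degree $n$ obeys $|P(z)|\le\bigl(C_0(1+\|X\|)^2/\eta\bigr)^n$, hence $\E|P(z)|^2\le C_0^{2n}\eta^{-2n}\,\E(1+\|X\|)^{4n}$. On $\caD(\epsilon,c)$ one has $\eta\ge N^{-1}$, so $\eta^{-2n}\le N^{2n}$; and $\E(1+\|X\|)^{4n}$ is bounded by a fixed power of $N$, uniformly in $N$, by estimating $\|X\|\le\|X\|_F$ and using the subexponential moment bounds on $(\sqrt N x_{ij})$ --- e.g.\ via Rosenthal's inequality for $\|X\|_F^2=\sum_{ij}|x_{ij}|^2$, or even the crude $\|X\|_F^2\le NM\max_{ij}|x_{ij}|^2$ together with $\E(\max_{ij}|x_{ij}|^2)^{2n}\le NM\,\E|x_{11}|^{4n}$. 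This gives $\E|P(z)|^2\le C_nN^{Cn}$ uniformly in $z\in\caD(\epsilon,c)$ for a universal $C$; since the left side is deterministic and $C_nN^{Cn}\le N^{Cn+\epsilon'}$ for $N$ large, this is precisely $\E|P(z)|^2\prec N^{Cn}$. The statement with $G^{(\T)}$ in place of $G$ follows verbatim from the minor version of the resolvent bound.

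For the ``in particular'' clause I would simply invoke the general implication recorded right after Definition~\ref{definition of stochastic domination}: if $|P(z)|\prec\Psi(z)^k$ uniformly on $\caD(\epsilon,c)$, then $\Phi:=\Psi^k$ is deterministic with $\Phi\ge(N\eta)^{-k}\ge N^{-k}$, and $Y:=|P|$ satisfies $\E Y^2\le N^{Cn}$ by the preceding step, so that implication gives $\E|P(z)|\prec\Psi(z)^k$, uniformly, and likewise for $G^{(\T)}$.

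The one non-routine point is the deterministic resolvent bound: $H(z)$ is \emph{not} self-adjoint (the $-zI_N$ block spoils that), so one cannot apply $\|(A-z)^{-1}\|\le(\im z)^{-1}$ to $H$ directly; the elimination of $v_{\overline{P}}$ is exactly the device that routes the estimate through the genuinely self-adjoint positive matrix $\wt Q$, i.e.\ through~\eqref{good resolvent}. Everything after that is bookkeeping, and the factor $(1+\|X\|)^2$ is harmless because $\|X\|$ has all moments bounded by powers of $N$.
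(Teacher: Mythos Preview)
Your proof is correct and takes a somewhat different, more unified route than the paper's. The paper bounds the three types of entries separately: for $G_{ab}$ with $a,b\in\llbracket 1,N\rrbracket$ it uses $\|(X^*TX-z)^{-1}\|\le\eta^{-1}$ directly from~\eqref{good resolvent}; for $G_{\alpha\beta}$ with $\alpha,\beta\in\llbracket N+1,N+M\rrbracket$ it observes that $\im\langle\bsv,(zT^{-1}-XX^*)\bsv\rangle=\eta\langle\bsv,T^{-1}\bsv\rangle\ge c\eta\|\bsv\|^2$ by strict positivity of $T^{-1}$, whence $|G_{\alpha\beta}|\le C|z|/\eta$ via~\eqref{bad resolvent}; and for the mixed entries $G_{a\alpha}$ it invokes the identity~\eqref{onesided mixed} to write $|G_{a\alpha}|\le|G_{aa}|\sum_\beta|x_{\beta a}||G^{(a)}_{\beta\alpha}|$, feeds in the two previous deterministic bounds, and then takes moments of the entries $x_{\beta a}$. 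The conclusion is assembled from these blockwise estimates via H\"older's inequality. Your single operator-norm bound $\|G(z)\|\le C_0(1+\|X\|)^2/\eta$, obtained by eliminating $v_{\overline P}$ and reducing to the self-adjoint $\wt Q$, covers all index types at once and passes to minors uniformly; the only stochastic input is the crude moment bound on $\|X\|$ via $\|X\|\le\|X\|_F$, which is wasteful numerically but perfectly adequate here since only a polynomial-in-$N$ estimate is needed. The paper's block-by-block treatment is slightly closer to how these entries are actually used later, but your argument is shorter and makes the extension to $G^{(\T)}$ transparent.
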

\begin{proof}
First, we note that $|G_{ab}|\le\frac{1}{\eta}$, $a,b\in\llbracket 1,N\rrbracket$, as follows from the self-adjointness of $X\adj TX$ and the spectral calculus.

Second, to bound $|G_{\alpha\beta}|$, $\alpha,\beta\in\llbracket N+1,M+N\rrbracket$, we recall that $T^{-1}$ is a strictly positive operator by Assumption~\ref{assumptions for local MP law}. Thus
\begin{align*} 
\im \langle \bsv, (zT^{-1}- XX\adj) \bsv\rangle = \eta \langle \bsv, T^{-1}\bsv\rangle\ge c\eta\, \|\bsv\|^2\,,\qquad\quad \forall \bsv\in\C^{M}\,,
 \end{align*}
 for some $c>0$ independent of $\bsv$, where $\langle \cdot,\cdot\rangle$ denotes the canonical inner product in $\C^M$. Since ${ z^{-1}} \overline P G(z)\overline P=({-zT^{-1}+XX\adj})^{-1}$, $|z|>0$, we get $|G_{\alpha\beta}|\le \frac{C { |z|}}{\eta}$. 

 Third, to bound~$\E|G_{a \alpha}|^p$, $a\in\llbracket 1,N\rrbracket$, $\alpha\in\llbracket N+1,N+M\rrbracket$, $p\ge 0$, we note that by~\eqref{onesided mixed} we have
 \begin{align}
  |G_{a \alpha}|=|G_{aa}|\sum_{\beta}|x_{\beta a} G_{\beta\alpha}^{(a)}|\le \frac{C { |z|}}{\eta^2}N|x_{\beta a}|\,,
 \end{align}
by the estimates above. From the moment bounds in Assumption~\ref{le subexponential decay}, we then conclude that $\E|G_{\alpha a}|^p\le C^p N^{cp}$, where we also used that $\eta\gg N^{-1}$,  $0<|z| < C$ by assumption.

The lemma now easily follows from H\"{o}lder's inequality.
\end{proof}

In the rest of the paper, we prove Proposition \ref{prop green} with the formalism outlined in this section. The actual calculation will be done for the simple case $F' \equiv 1$; the proof for general $F'$ is basically the same, though the computations are much longer for this case. The details for $F'\not\equiv 1$ can be found in~\cite{LSY}.

\section{Green Function Flow} \label{G flow}

The key idea of our proof of Proposition~\ref{prop green} is similar to the one of the proof of Proposition~5.2 in~\cite{LSY} for deformed Wigner matrices: We consider a continuous interpolation between the sample covariance matrices $\wt Q$ and $W$ by introducing a time evolution that deforms~$T$ continuously to the identity. We then track the associated flow of the Green function for sufficiently long time. The outcome is an estimate on the time derivative of the Green function which is sufficiently accurate to prove Proposition~\ref{prop green}.

\subsection{Preliminaries}

Suppose that $T=\gamma_0\Sigma$ is diagonal, {\it i.e.}, $T=\diag(t_\alpha)$. We interpolate between $\Sigma=\diag(\sigma_\alpha)$ and the identity matrix $\lone$ by introducing the time evolution $t\mapsto(\sigma_{\alpha}(t))$ defined by
\begin{align}\label{le time evolution of Sigma}
\frac{1}{\sigma_{\alpha}(t)} = \e{-t} \frac{1}{\sigma_{\alpha}(0)} + (1-\e{-t})\,,\quad \Sigma(t)=\diag(\sigma_\alpha(t))\,,\qquad (t\ge 0)\,.
\end{align}
We let $z \equiv z(t)$ be time-dependent and define the $(N+M) \times (N+M)$ matrix $H(t) \equiv H( z,t)$ by
\begin{align*}
PH(t) P \deq -z(t)\,, \qquad  \overline P H(t) \overline P \deq -T^{-1}(t)\,,
\end{align*}
and 
\begin{align*}
 PH(t) \overline P \deq X^*\,, \qquad\overline P H(t) P \deq X\,,
\end{align*}
with $T(t)=\gamma(t) \Sigma(t)$, $T(0)=\gamma_0\Sigma$, for some time-dependent scaling factor $\gamma(t)\in\R$ (see~\eqref{le gamma} below for the definition of $\gamma(t)$). We also let
\begin{align}
G(z,t) \deq H(z,t)^{-1}\,, \qquad m(z,t) = \frac{1}{N} \Tr G(z,t)\,,\quad\qquad ( z\in\C^+)\,.
\end{align}

From~\eqref{xi+}, it is natural to let $\xi_+(t)$ be the largest solution to
\begin{align}\label{le xi}
\frac{1}{M} \sum_{\alpha} \left( \frac{\sigma_{\alpha}(t) \xi_+(t)}{1 - \sigma_{\alpha}(t) \xi_+(t)} \right)^2 = d\,,
\end{align}
with $\xi_+(0) = \xi_+$. We then choose the scaling factor $\gamma \equiv \gamma(t)$ to be given by
\begin{align}\label{le gamma}
\gamma(t) = \left( \frac{1}{N} \sum_{\alpha} \left( \frac{\sigma_{\alpha}(t)}{1 - \sigma_{\alpha}(t) \xi_+(t)} \right)^3 + \big( \xi_+(t) \big)^{-3} \right)^{-1/3}\,,
\end{align}
with $\gamma_0=\gamma(0)$, and we also introduce
\begin{align}\label{le tau}
\tau \equiv \tau(t) \deq \frac{\xi_+(t)}{\gamma(t)}\,.
\end{align}
For simplicity, we often omit the $t$-dependence in the notation for $T(t)$, $\gamma(t)$ and $\tau(t)$ in the following.

Note that we have from~\eqref{le xi},~\eqref{le gamma} and~\eqref{le tau} that
\begin{align} \label{tau 2}
\frac{1}{N} \sum_{\alpha} \left( \frac{1}{t_{\alpha}^{-1} - \tau} \right)^2 = \frac{1}{\tau^2}\,,\qquad\qquad\frac{1}{N} \sum_{\alpha} \left( \frac{1}{t_{\alpha}^{-1} - \tau} \right)^3 + \frac{1}{\tau^3} = 1\,.
\end{align}
In the following, we refer to the identities in~\eqref{tau 2} as ``sum rules''.

We now consider the evolution of the Green function $G\equiv G(t)$ under the evolution governed by~\eqref{le time evolution of Sigma}. For the diagonal Green function entries $G_{ii}$, $i\in\llbracket 1,N\rrbracket$, we get
\begin{align} \label{dot G_ii}
 \E\,\frac{\partial G_{ii}}{\partial t}= \dot z \sum_{a} \E\,[G_{ia}G_{ai}] - \sum_{\alpha}\frac{\partial_t t_{\alpha}}{t_\alpha^2}\E\,[G_{i\alpha}G_{\alpha i}]\,.
\end{align}

\begin{remark} \label{rho_t}
Let $\widetilde m_{\fc}(z,t)$ be the solution to
\begin{align*}
 \widetilde m_{\fc}(z,t)=\frac{1}{-z+\frac{1}{dM}\sum_{\alpha}\frac{t_{\alpha}}{t_{\alpha}\widetilde m_{\fc}(z,t)+1}}\,,\qquad\qquad (z\in\C^+\,,t\ge 0)\,.
\end{align*}
such that $ \im \widetilde m_{\fc}(z,t)\ge 0$.

Setting $\widetilde\rho_{\fc}(E,t)\deq\lim_{\eta\searrow 0}\pi^{-1}\im \widetilde m_{\fc}(E+\ii\eta,t)$, we note that the rightmost point of the support of the measure $\widetilde\rho_{\fc}(t)$, denoted by $L_+\equiv L_+(t)$, is given by $L_+=\gamma E_+$, or equivalently,
\begin{align} \label{L_+}
L_+ = \frac{1}{\tau} + \frac{1}{dM} \sum_{\alpha} \frac{t_{\alpha}}{1 - t_{\alpha} \tau} = \frac{1}{\tau} + \frac{1}{N} \sum_{\alpha} \frac{1}{t_{\alpha}^{-1} - \tau} \,.
\end{align}
In fact, the rescaling by $\gamma(t)$ assures that
$$
\wt \rho (E,t) = \frac{1}{\pi} \sqrt{L_+ - E} \, \big(1 + O(L_+ - E) \big)\,,\qquad\qquad (t\ge 0)\,,
$$
as $E \nearrow L_+$, as may be checked by an explicit computation.
\end{remark}

\subsection{Proof of Proposition~\ref{prop green}}
In this subsection, we give the proof of Proposition~\ref{prop green}, which is based on two technical lemmas, Lemma~\ref{le first lemma} and Lemma~\ref{getting rid of the rest} below. For simplicity, we choose $F'\equiv \lone$. Recall the definition of the deterministic control parameter $\Psi$ in~\eqref{le definition of the psi}.

The main ingredient of the proof of the Green function comparison theorem, Proposition~\ref{prop green}, is the estimate $\im \E\, [\partial_t G_{ii}(z)] = \caO(M \Psi^5)$, for appropriately chosen~$z$. (The naive size of $\E\,[\partial_t G_{ii}]$ is $\caO(M \Psi^2)$ as one sees from \eqref{dot G_ii}.)  Once we have established the estimate $\im \E\, [\partial_t G_{ii}(z)] = \caO(M \Psi^5)$, we can integrate it from $t=0$ to $t = 2 \log N$ to compare $\im m_{\widetilde Q}$ with $\im m|_{t = 2 \log N}$. The comparison between $\im m|_{t = 2 \log N}$ and $m_{W}$ can easily be done, since $\Sigma(t)$ is close enough to the identity at~$t = 2 \log N$.

To show that the imaginary part of~\eqref{dot G_ii} is much smaller than its naive size, we use, in a first step, the following ``decoupling'' lemma. 

\begin{lemma}\label{le first lemma}
Under the assumptions of Proposition~\ref{prop green} the following holds true. Let $z(t)\equiv z=L_+ (t) +y+\ii \eta$, with $L_+(t)$ as in~\eqref{L_+},  $y\in[-N^{-2/3+\epsilon},N^{-2/3+\epsilon}]$ and $\eta=N^{-2/3-\epsilon}$.

 Then, there are $z$-dependent random variables $X_{22}$, $X_{32}$, $X_{33}$, $X_{42}$, $X_{43}$, $X_{44}$ and $X_{44}'$, satisfying
\begin{align*}
 X_{22}=\caO(\Psi^2)\,,\quad\qquad X_{32}\,,X_{33}=\caO(\Psi^3)\,,\quad\qquad X_{42}\,,X_{43}\,,X_{44}\,,X_{44}'=\caO(\Psi^4)\,,
\end{align*}
such that
\begin{align} \label{le expansion in first lemma}
\E_{\alpha} [G_{i\alpha}G_{\alpha i}]& =\frac{1}{(t_{\alpha}^{-1} -\tau)^2} X_{22} - \frac{2}{(t_{\alpha}^{-1} -\tau)^3} X_{32} - \frac{2}{(t_{\alpha}^{-1} -\tau)^3} X_{33} + \frac{3}{(t_{\alpha}^{-1} -\tau)^4} X_{42}\nonumber\\
& \qquad  + \frac{6}{(t_{\alpha}^{-1} -\tau)^4} X_{43} + \frac{12}{(t_{\alpha}^{-1} -\tau)^4} X_{44}+ \frac{3}{(t_{\alpha}^{-1} -\tau)^4} X_{44}' + \caO(\Psi^5) \,, 
\end{align}
uniformly in $t\ge 0$. The random variables above are explicitly given by
\begin{align*}
X_{22} &= \frac{1}{N} \sum_{s} G_{is} G_{si}\,,\quad\qquad\qquad &X_{32}&=\left(m+\tau\right) \frac{1}{N} \sum_{s} G_{is} G_{si}\,,\nonumber\\
X_{33}&= \frac{1}{N^2} \sum_{r, s} G_{ir} G_{rs} G_{si}\,, & X_{42} &= \left( m+\tau\right)^2 \frac{1}{N} \sum_{s} G_{is} G_{si} \,,\nonumber\\
X_{43}&= \left(m+\tau\right) \frac{1}{N} \sum_{r, s} G_{ir} G_{rs} G_{si}\,, &X_{44} &=\frac{1}{N^3} \sum_{r, s,t} G_{ir} G_{rs} G_{st} G_{ti}\,, \nonumber\\
X_{44}' & = \frac{1}{N^3} \sum_{r,s, t} G_{is} G_{si} G_{rt} G_{tr}\,,
\end{align*}
where $G\equiv G(z(t),t)$, $m\equiv m(z(t),t)=\frac{1}{N}\sum_{s}G_{ss}(z(t),t)$ and $\tau(t)$ is defined in~\eqref{le tau}.
\end{lemma}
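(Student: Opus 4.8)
The plan is to prove~\eqref{le expansion in first lemma} by an explicit resolvent expansion of $G_{i\alpha}G_{\alpha i}$ that isolates the dependence on the $\alpha$-th row of $X$, followed by the partial expectation $\E_{\alpha}$ evaluated with the large deviation bounds of Lemma~\ref{lemma large deviation estimates}, and a final rewriting of the minor Green functions in terms of the full Green function. First, by the one-sided identity~\eqref{onesided mixed}, $G_{i\alpha}=-G_{\alpha\alpha}\sum_{c}G^{(\alpha)}_{ic}x_{\alpha c}$ and symmetrically for $G_{\alpha i}$, so that
\[
G_{i\alpha}G_{\alpha i}=G_{\alpha\alpha}^{2}\sum_{b,c}x_{\alpha b}x_{\alpha c}\,G^{(\alpha)}_{ib}G^{(\alpha)}_{ic}\,,
\]
in which $G^{(\alpha)}$ is independent of the row $(x_{\alpha c})_{c}$; the only remaining dependence on that row sits in $G_{\alpha\alpha}$, which by Schur's formula~\eqref{schur} equals $-(t_{\alpha}^{-1}+Z_{\alpha})^{-1}$ with $Z_{\alpha}=\sum_{a,b}x_{\alpha a}G^{(\alpha)}_{ab}x_{\alpha b}$. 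I would write $t_{\alpha}^{-1}+Z_{\alpha}=P_{\alpha}+q_{\alpha}+z_{\alpha}$ with $P_{\alpha}\deq t_{\alpha}^{-1}-\tau$ deterministic, $q_{\alpha}\deq m^{(\alpha)}+\tau$ row-$\alpha$-independent, and $z_{\alpha}\deq Z_{\alpha}-m^{(\alpha)}$ centred under $\E_{\alpha}$. Here three a priori facts are needed: $P_{\alpha}\sim1$ uniformly in $\alpha$ and $t\ge0$ — this is where the uniform-in-$t$ form of Assumption~\ref{assumptions for local MP law}, i.e.\ $\sup_{t\ge0}\sigma_{1}(t)\xi_{+}(t)<1$, is used; $z_{\alpha}=\caO(\Psi)$, by the large deviation estimate~\eqref{LDE 3} and the Ward identity~\eqref{Ward}; and $q_{\alpha}=\caO(\Psi)$, which follows by combining the local law of Lemma~\ref{lemma local law}, the comparison $|m-m^{(\alpha)}|\prec\Psi$ proved inside Lemma~\ref{local law Greek}, the square-root behaviour of $\widetilde\rho_{\fc}(\cdot,t)$ at its rightmost edge $L_{+}(t)$ recorded in Remark~\ref{rho_t} (and Lemma~\ref{lem:square root for hat mfc} at $t=0$), and the identity $\widetilde m_{\fc}(L_{+}(t),t)=-\tau(t)$.

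With these in hand I would expand $G_{\alpha\alpha}^{2}=(P_{\alpha}+q_{\alpha}+z_{\alpha})^{-2}$ as a double geometric series in $q_{\alpha}/P_{\alpha}$ and $z_{\alpha}/P_{\alpha}$ with explicit binomial coefficients, truncated at the order for which the remainder, multiplied by the factor $\sum_{b,c}x_{\alpha b}x_{\alpha c}G^{(\alpha)}_{ib}G^{(\alpha)}_{ic}$ (which is $\caO(\Psi^{2})$ after $\E_{\alpha}$ by~\eqref{Ward}), is $\caO(\Psi^{5})$; since $q_{\alpha},z_{\alpha}=\caO(\Psi)$ and $P_{\alpha}\sim1$, it suffices to keep terms of combined degree at most two in $(q_{\alpha},z_{\alpha})$ together with the purely-$z_{\alpha}$ terms up to degree three. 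Substituting and applying $\E_{\alpha}$, one is left with $\E_{\alpha}\big[z_{\alpha}^{k}\sum_{b,c}x_{\alpha b}x_{\alpha c}G^{(\alpha)}_{ib}G^{(\alpha)}_{ic}\big]$ for $k=0,1,2,3$, each multiplied by a row-$\alpha$-independent coefficient of the form $P_{\alpha}^{-j}q_{\alpha}^{\ell}$. These partial expectations I would evaluate by a pair-partition (cumulant) expansion of the monomials in the row variables: the second-moment contractions $\E_{\alpha}x_{\alpha b}x_{\alpha c}=N^{-1}\delta_{bc}$ produce the main contributions — chain sums such as $N^{-1}\sum_{s}G^{(\alpha)}_{is}G^{(\alpha)}_{si}$, $N^{-2}\sum_{r,s}G^{(\alpha)}_{ir}G^{(\alpha)}_{rs}G^{(\alpha)}_{si}$, their four-fold analogues, and products thereof with $q_{\alpha}$ — while contributions of the third and higher joint cumulants carry extra powers of $N^{-1/2}$ (legitimate since $\sqrt{N}\,x_{\alpha c}$ has bounded moments by~\eqref{le subexponential decay}) and, after the free index is resummed with~\eqref{Ward}, are absorbed into the error. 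Every resummation of a free index costs a factor $\Psi^{2}$ (using~\eqref{Ward}, $\im G^{(\alpha)}_{aa}\prec\Psi$ and $1/(N\eta)\le\Psi$), so the power counting matches the orders claimed for $X_{22},\dots,X_{44}'$.

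The last step is to replace the minor Green functions by the full ones, $G^{(\alpha)}_{ab}=G_{ab}-G_{a\alpha}G_{\alpha b}/G_{\alpha\alpha}$ via~\eqref{basic resolvent}, and $m^{(\alpha)}=m+\caO(1/(N\eta))$ via eigenvalue interlacing as in Lemma~\ref{local law Greek}. This must be done carefully: although each substitution costs $\caO(\Psi^{2})$ (since $G_{a\alpha},G_{\alpha b}=\caO(\Psi)$ and $G_{\alpha\alpha}\sim1$), after resumming free indices with~\eqref{Ward} several of these corrections are of the same orders $\caO(\Psi^{3})$, $\caO(\Psi^{4})$ as the main terms and so genuinely enter the final expansion. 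Collecting all surviving contributions and tracking the binomial prefactors then yields exactly the seven terms of~\eqref{le expansion in first lemma} with the coefficients $2,2,2,3,6,12,3$ and the stated explicit forms of $X_{22},\dots,X_{44}'$, every remaining contribution being $\caO(\Psi^{5})$ uniformly in $t\ge0$ (the uniformity inherited from that of all the input estimates).

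The main obstacle is precisely this bookkeeping in the last two steps: controlling the order in $\Psi$ of every monomial produced by the $\E_{\alpha}$-cumulant expansion and of every term produced by the conversion $G^{(\alpha)}\to G$, deciding which ones survive (and reproduce $X_{22},\dots,X_{44}'$) and which are $\caO(\Psi^{5})$, and checking that the numerical prefactors combine correctly, all uniformly in $t$. The delicate point is that many individual terms are only $\caO(\Psi)$ by naive size counting and improve to $\caO(\Psi^{2})$ only once a Ward identity is invoked (using $\Psi^{2}\gtrsim\im\widehat m_{\fc}/(N\eta)$ and $\Psi\ge1/(N\eta)$), so the gain is not visible term by term and must be organised with care; this is the content of the appendices, following the corresponding computation for deformed Wigner matrices in~\cite{LSY}.
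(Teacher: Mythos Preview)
Your proposal is correct and follows essentially the same route as the paper's proof in Appendix~\ref{sec:expansion of GiaGai}: expand $G_{i\alpha}G_{\alpha i}=G_{\alpha\alpha}^{2}\sum_{k,l}G^{(\alpha)}_{ik}x_{\alpha k}x_{\alpha l}G^{(\alpha)}_{li}$ via~\eqref{onesided mixed}, Taylor-expand $G_{\alpha\alpha}^{2}$ about $(t_{\alpha}^{-1}-\tau)^{-2}$ using Schur's formula, take $\E_{\alpha}$ with the pairing rule $\E_{\alpha}x_{\alpha b}x_{\alpha c}=N^{-1}\delta_{bc}$ (higher cumulants being absorbable into $\caO(\Psi^{5})$), and then remove the upper index $\alpha$ via~\eqref{basic resolvent}, tracking that the resulting corrections feed back into the $X_{33},X_{43},X_{44},X_{44}'$ terms. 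The only organisational difference is that the paper expands in the single small quantity $\sum_{p,q}x_{\alpha p}G^{(\alpha)}_{pq}x_{\alpha q}+\tau$ rather than splitting it as $q_{\alpha}+z_{\alpha}$; in particular the paper stops at second order in this quantity (your cubic $z_{\alpha}$ terms are already $\caO(\Psi^{5})$ and unnecessary), which makes the bookkeeping slightly shorter but is otherwise equivalent to what you describe.
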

We refer to Lemma~\ref{le first lemma} as a ``decoupling'' lemma, since on the right side of~\eqref{le expansion in first lemma} the Greek index~$\alpha$ is, up to the error $\caO(\Psi^5)$, decoupled from the Green functions which only have Roman indices as lower indices. Lemma~\ref{le first lemma} is proven in the Appendix~\ref{sec:expansion of GiaGai} below.

Taking the time derivative of~\eqref{L_+} we get
\begin{align} \label{dot z 1}
\dot z = \dot L_+ = -\frac{\dot \tau}{\tau^2} + \frac{1}{dM} \sum_{\alpha} \frac{t_{\alpha}^2 \dot\tau}{(1 - t_{\alpha} \tau)^2} + \frac{1}{dM} \sum_{\alpha} \frac{\partial_t t_{\alpha}}{1 - t_{\alpha} \tau} + \frac{1}{dM} \sum_{\alpha} \frac{\tau t_{\alpha} (\partial_t t_{\alpha})}{(1 - t_{\alpha} \tau)^2}\,.
\end{align}
From~\eqref{le tau} we observe that the first two terms on the right side of~\eqref{dot z 1} cancel. Thus, simplifying the last two terms in~\eqref{dot z 1}, we obtain
\begin{align} \label{dot z}
\dot z = \frac{1}{dM} \sum_{\alpha} \frac{\partial_t t_{\alpha}}{(1 - t_{\alpha} \tau)^2} = \frac{1}{N} \sum_{\alpha} \frac{\partial_t t_{\alpha}}{t_{\alpha}^2}\frac{1}{(t_{\alpha}^{-1} - \tau)^2}\,.
\end{align}
Hence, plugging~\eqref{le expansion in first lemma} into~\eqref{dot G_ii} we find
\begin{align}\label{after dot z}
 \E\,\frac{\partial G_{ii}}{\partial t}&= \dot z \sum_a\E\,[G_{ia}G_{ai}] - \sum_\alpha\frac{\partial_t t_{\alpha}}{t_\alpha^2}\frac{1}{(t_\alpha^{-1}-\tau)^2}\frac{1}{N}\sum_{a}\E\,[G_{ia}G_{ai}]\nonumber\\
&\qquad+ \sum_{\alpha}\frac{\partial_t t_{\alpha}}{t_\alpha^2}\E\left[ \frac{2}{(t_{\alpha}^{-1} -\tau)^3} X_{32} + \frac{2}{(t_{\alpha}^{-1} -\tau)^3} X_{33}\right]\nonumber\\
&\qquad-\sum_{\alpha}\frac{\partial_t t_{\alpha}}{t_\alpha^2}\E\,\left[\frac{3}{(t_{\alpha}^{-1} -\tau)^4} X_{42} + \frac{6}{(t_{\alpha}^{-1} -\tau)^4} X_{43}  \right]\nonumber\\
&\qquad-\sum_{\alpha}\frac{\partial_t t_{\alpha}}{t_\alpha^2}\E\,\left[ \frac{12}{(t_{\alpha}^{-1} -\tau)^4} X_{44} + \frac{3}{(t_{\alpha}^{-1} -\tau)^4} X_{44}'\right]+\caO(M\Psi^5) \,. 
\end{align}
Note that the first two terms in~\eqref{after dot z} cancel by~\eqref{dot z} and that we have
\begin{align}\label{second after dot z}
 \E\,\frac{\partial G_{ii}}{\partial t}&= \sum_{\alpha}\frac{\partial_t t_{\alpha}}{t_\alpha^2}\E\left[ \frac{2}{(t_{\alpha}^{-1} -\tau)^3} X_{32} + \frac{2}{(t_{\alpha}^{-1} -\tau)^3} X_{33}\right]\nonumber\\
&\qquad-\sum_{\alpha}\frac{\partial_t t_{\alpha}}{t_\alpha^2}\E\,\left[\frac{3}{(t_{\alpha}^{-1} -\tau)^4} X_{42} + \frac{6}{(t_{\alpha}^{-1} -\tau)^4} X_{43}  \right]\nonumber\\
&\qquad-\sum_{\alpha}\frac{\partial_t t_{\alpha}}{t_\alpha^2}\E\,\left[ \frac{12}{(t_{\alpha}^{-1} -\tau)^4} X_{44} + \frac{3}{(t_{\alpha}^{-1} -\tau)^4} X_{44}'\right]+\caO(M\Psi^5) \,. 
\end{align}

To complete the proof of Proposition~\ref{prop green}, we are going to show that the imaginary part of the right side of~\eqref{second after dot z} is of $\caO(M\Psi^5)$ as is noted in the next lemma.
\begin{lemma}\label{getting rid of the rest}
 Under the assumptions of Proposition~\ref{prop green} with the notation of Lemma~\ref{le first lemma}, we have
\begin{align}\label{le eq getting rid of the rest}
&\sum_{\alpha}\frac{\partial_t t_{\alpha}}{t_\alpha^2}\im \E\left[ \frac{2}{(t_{\alpha}^{-1} -\tau)^3} X_{32} + \frac{2}{(t_{\alpha}^{-1} -\tau)^3} X_{33}\right]-\sum_{\alpha}\frac{\partial_t t_{\alpha}}{t_\alpha^2}\im \E\,\left[\frac{3}{(t_{\alpha}^{-1} -\tau)^4} X_{42} \right]\nonumber\\
&\quad-\sum_{\alpha}\frac{\partial_t t_{\alpha}}{t_\alpha^2}\im \E\,\left[ \frac{6}{(t_{\alpha}^{-1} -\tau)^4} X_{43} + \frac{12}{(t_{\alpha}^{-1} -\tau)^4} X_{44} + \frac{3}{(t_{\alpha}^{-1} -\tau)^4} X_{44}' \right]=\caO(M\Psi^5)\,,
\end{align}
uniformly in $t\ge 0$.
\end{lemma}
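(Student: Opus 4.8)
The plan is to prove \eqref{le eq getting rid of the rest} by exhibiting a cancellation inside the imaginary part. A priori each term on its left side is only of order $M\Psi^3$ (the $X_{32}$, $X_{33}$ contributions) or $M\Psi^4$ (the $X_{42}$, $X_{43}$, $X_{44}$, $X_{44}'$ contributions), so that naively the whole expression is of order $M\Psi^3$. The point is that the specific real-coefficient linear combination appearing in \eqref{le eq getting rid of the rest} is designed so that the $\Psi^3$- and $\Psi^4$-order parts of its imaginary part annihilate, leaving only the remainder $\caO(M\Psi^5)$. This cancellation is algebraic and is encoded in a family of exact identities among Green function monomials --- the ``optical theorems'' --- which are the content of Appendix~\ref{sec:optical theorems}; the overall scheme parallels the treatment of deformed Wigner matrices in~\cite{LSY}.

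\textbf{Step 1 (Reduction of the $\alpha$-sums).} First I would dispose of the summation over $\alpha$. By Assumption~\ref{assumptions for local MP law}, the quantities $t_\alpha^{-1}-\tau=\gamma^{-1}\sigma_\alpha^{-1}(1-\sigma_\alpha\xi_+)$ stay bounded away from $0$ and $\infty$ uniformly in $\alpha$ and $t\ge 0$ along the flow \eqref{le time evolution of Sigma}, while $\partial_t t_\alpha/t_\alpha^2$ is uniformly bounded; hence every coefficient $\frac{\partial_t t_\alpha}{t_\alpha^2}(t_\alpha^{-1}-\tau)^{-k}$ is $\caO(1)$ and the $\alpha$-sums contain $M$ terms. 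Crucially, the sum rules \eqref{tau 2} allow one to replace $\frac1N\sum_\alpha(t_\alpha^{-1}-\tau)^{-2}$ and $\frac1N\sum_\alpha(t_\alpha^{-1}-\tau)^{-3}$ by the explicit constants $\tau^{-2}$ and $1-\tau^{-3}$. After this reduction the left side of \eqref{le eq getting rid of the rest} becomes a fixed linear combination, with coefficients of size $\caO(M)$, of $\im\E[X_{32}]$, $\im\E[X_{33}]$, $\im\E[X_{42}]$, $\im\E[X_{43}]$, $\im\E[X_{44}]$ and $\im\E[X_{44}']$, up to error terms already of order $M\Psi^5$.

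\textbf{Step 2 (Optical theorems).} This is the heart of the argument. Starting from the resolvent identities \eqref{basic resolvent} and \eqref{schur}--\eqref{onesided mixed}, the Ward identity \eqref{Ward}, and the approximate self-consistent relation $m+\tau=\caO(\Psi)$ --- which holds since $m=m_{\widetilde Q}\approx\widetilde m_{\fc}$ by Lemma~\ref{lemma local law} while $\widetilde m_{\fc}(L_+,t)=-\tau$ by the rescaling built into \eqref{le gamma}, see Remark~\ref{rho_t} and Lemma~\ref{lem:square root for hat mfc} --- one expresses each of $X_{33}$, $X_{43}$, $X_{44}$, $X_{44}'$ in terms of $X_{22}$, $X_{32}$, $X_{42}$ together with additional resolvent monomials carrying a manifest extra factor $\frac{\im\widehat m_{\fc}}{N\eta}$ or $\frac1{N\eta}$, i.e.\ an extra $\Psi^2$. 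In practice this is carried out by continuing the Schur-complement expansion of $G_{i\alpha}G_{\alpha i}$ one order beyond Lemma~\ref{le first lemma}, and by systematically trading a squared off-diagonal sum $\frac1N\sum_s|G_{is}|^2$ for $\frac{\im G_{ii}}{N\eta}=\caO(\Psi^2)$ via the Ward identity \eqref{Ward}. When the imaginary parts are taken, the numerical coefficients $2,2,-3,-6,-12,-3$ in \eqref{le eq getting rid of the rest} turn out to be exactly those for which the surviving $\Psi^3$- and $\Psi^4$-order contributions cancel.

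\textbf{Step 3 (Conclusion) and the main obstacle.} Substituting the optical theorems of Step~2 into the reduced linear combination of Step~1, every remaining term is of the form $\caO(M)\cdot\caO(\Psi^5)$, which gives \eqref{le eq getting rid of the rest}; uniformity in $t\ge0$ is automatic because all constants above are controlled by Assumption~\ref{assumptions for local MP law} uniformly along the flow. The main obstacle is Step~2: checking the precise cancellation of the $\Psi^3$- and $\Psi^4$-order terms requires a lengthy but mechanical bookkeeping of the Schur-complement expansion and of the resulting Green function monomials, which is deferred to Appendices~\ref{sec:optical theorems} and~\ref{sec:proof of getting rid of the rest}.
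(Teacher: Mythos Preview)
Your overall architecture (reduce the $\alpha$-sums to deterministic coefficients times $\im\E[X_\bullet]$, then invoke optical theorems to cancel the $\Psi^3$- and $\Psi^4$-order pieces) matches the paper's, but your description of both steps contains real gaps that would prevent the argument from closing.

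\textbf{On Step 1.} The sum rules~\eqref{tau 2} only fix $A_2=\tau^{-2}$ and $A_3=1-\tau^{-3}$; they say nothing about $A_4=\frac1N\sum_\alpha(t_\alpha^{-1}-\tau)^{-4}$. But the coefficient in front of $\im\E[X_4]$ involves $\frac1N\sum_\alpha\frac{\partial_t t_\alpha}{t_\alpha^2}(t_\alpha^{-1}-\tau)^{-4}$, which genuinely contains $A_4$. What the paper actually does is write $\partial_t t_\alpha/t_\alpha^2=(\dot\gamma/\gamma+1)t_\alpha^{-1}-\gamma^{-1}$ explicitly from the flow~\eqref{le time evolution of Sigma}, obtaining the two coefficients as functions of $\dot\gamma,\gamma,\tau,A_3,A_4$; it then \emph{differentiates the sum rules~\eqref{tau 2} in $t$} to derive the purely algebraic Lemma~\ref{le lemma coeffs}, which shows both coefficient combinations share the common factor $\gamma(\tau^{-2}A_4-A_3^2)$ and that their ratio is exactly $A_4-\tau^{-4}$. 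This differentiation of the sum rules is the key step you are missing; without it you cannot match the $X_3$- and $X_4$-coefficients to the optical theorem.

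\textbf{On Step 2.} The mechanism you describe --- trading $\frac1N\sum_s|G_{is}|^2$ for $\frac{\im G_{ii}}{N\eta}$ via the Ward identity --- is not how the optical theorems are obtained (and note $G_{is}G_{si}=G_{is}^2\neq|G_{is}|^2$ for $z\in\C^+$, so the Ward identity does not apply directly to $X_{22}$). The paper instead expands $G_{is}G_{si}$ in the \emph{Roman} index $s$ using the Schur formula~\eqref{schur} for $G_{ss}$ around $-\tau$, then compares with the Greek-index expansion~\eqref{expansion 1'}. This yields not a reduction of $X_{33},X_{43},\ldots$ to $X_{22},X_{32},X_{42}$, but rather the single relation (Lemma~\ref{le lemma optical theorem})
\[
\E[X_3]-\tfrac1N=(A_4-\tau^{-4})\,\E[X_4]+\caO(\Psi^5),\qquad X_3\deq 2(X_{32}+X_{33}),\quad X_4\deq 3(X_{42}+2X_{43}+4X_{44}+X_{44}').
\]
Taking imaginary parts kills the $1/N$, and the factor $A_4-\tau^{-4}$ matches exactly the coefficient ratio from Lemma~\ref{le lemma coeffs}; this is what forces the cancellation, not the particular numerals $2,2,-3,-6,-12,-3$ alone.
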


We remark that the naive size of the right side of~\eqref{le eq getting rid of the rest} is~$\caO(M \Psi^3)$, but for our choice of $\gamma$ the terms cancel up to errors of $\caO(M\Psi^5)$. Similar to the discussion in~\cite{LSY}, the sum rules in~\eqref{tau 2} have crucial roles in this cancellation mechanism. Lemma~\ref{getting rid of the rest} is proven in the Appendix~\ref{sec:proof of getting rid of the rest}.

\begin{proof}[Proof of Proposition~\ref{prop green}]
For simplicity we choose $F' \equiv \lone$. From~\eqref{second after dot z} and Lemma~\ref{getting rid of the rest}, we find that
\begin{align} \label{G_ii derivative bound}
\E \left[ \im \frac{\partial G_{ii}}{\partial t} \right] =\caO(\Psi^2)\,.
\end{align}
Integrating both sides of \eqref{G_ii derivative bound} from $t=0$ to $t = 2 \log N$ we obtain that 
\begin{multline} \label{green estimate 1}
\bigg| \E \bigg[ N \int_{E_1}^{E_2} \im m( x + L_+ + \ii \eta)\big|_{t=0}\, \dd x \bigg] \\ - \E \bigg[ N \int_{E_1}^{E_2} \im m(x + L_+ + \ii \eta)\big|_{t=2 \log N}\, \dd x \bigg] \bigg|\leq N^{-1/3 + C'\epsilon}\,,
\end{multline}
for some constant $C' > 0$.

At $t = \infty$, we have $\sigma_{\alpha}(\infty) = 1$, for all $\alpha \in \llbracket N+1, N+M \rrbracket$, hence by definition
$$
\xi_+(\infty) = \frac{\sqrt d}{1 + \sqrt d}\,, \qquad\qquad \gamma(\infty) = \sqrt d \,(1 + \sqrt d)^{-4/3}\,.
$$
In particular,
$$
m(x + L_+ + \ii \eta)\big|_{t=\infty} = m_W (x + M_+ + \ii \eta)\,.
$$

Let $T_\frf \deq 2 \log N$. At $t = T_\frf$, we have $\sigma_{\alpha}(T_\frf) = 1 + O(N^{-2})$. Using the result at $t = \infty$, it can be easily seen that 
$$
\gamma(T_\frf) = \gamma(\infty) + O(N^{-2})\,.
$$
Similarly, we also have that $z(T_\frf) = z(\infty) + O(N^{-2})$. Thus, the matrix $H(T_\frf) - H(\infty)$ is a diagonal matrix whose entries are $O(N^{-2})$.

Using the resolvent identity
\begin{align}
G(T_\frf) - G(\infty) = -G(T_\frf) \left( H(T_\frf) - H(\infty) \right) G(\infty)\,,
\end{align}
we can now bound
\begin{align*}
|G_{ii} (T_\frf) - G_{ii} (\infty)| = \left| \sum_A -G_{iA}(T_\frf) \left( H_{AA}(T_\frf) - H_{AA}(\infty) \right) G_{Ai}(\infty) \right| \prec N^{-5/3}\,,
\end{align*}
and we thus have
\begin{multline} \label{green estimate 2}
\bigg| \E \bigg[ N \int_{E_1}^{E_2} \im m( x + L_+ + \ii \eta)\big|_{t=2 \log N}\, \dd x \bigg]\\ - \E \bigg[ N \int_{E_1}^{E_2} \im m(x + L_+ + \ii \eta)\big|_{t= \infty}\, \dd x \bigg] \bigg| \leq N^{-4/3 + C'\epsilon}\,.
\end{multline}
Since $m( x + L_+ + \ii \eta)\big|_{t=0} = m_{\wt Q} (x + L_+ + \ii \eta)$, we get the desired result from~\eqref{green estimate 1} and~\eqref{green estimate 2}.
\end{proof}

\begin{appendix}
 
\section{Proof of Lemma~6.2}\label{sec:expansion of GiaGai}
In this section we prove Lemma~\ref{le first lemma}. We start expanding $\E \,[G_{i\alpha}G_{\alpha i}]$ in the random variables indexed by the Greek index~$\alpha$. The following expansion follows closely the expansions used in~\cite{LSY}.

\begin{proof}[Proof of Lemma~\ref{le first lemma}]
 
Using the formula for $G_{i\alpha}$ in~\eqref{onesided}, {\it i.e.},
\begin{align*}
 G_{i\alpha}=-G_{\alpha\alpha} \sum_{k}x_{\alpha k} G^{(\alpha)}_{ik}\,,
\end{align*}
we expand $G_{i\alpha}G_{\alpha i}$ in the lower index~$\alpha$ as
\begin{align} \label{one-sided 2}
 G_{i\alpha}G_{\alpha i}=G_{\alpha\alpha}^2\sum_{k,l}G_{ik}^{(\alpha)}x_{\alpha k}x_{\alpha l} G_{li}^{(\alpha)}\,.
\end{align}
Note that, by Schur's complement formula~\eqref{schur},
\begin{align}\label{le what we get from schur}
 G_{\alpha\alpha}=\frac{1}{h_{\alpha\alpha}-\sum_{p, q}^{(\alpha)}h_{\alpha p}G^{(\alpha)}_{pq} h_{q \alpha}} = \frac{1}{-t_{\alpha}^{-1} - \sum_{p, q} x_{\alpha p}G^{(\alpha)}_{pq} x_{\alpha q}}\,.
\end{align}
(The use of Roman letters $p, q$ can be justified since $h_{\alpha p} = 0$ for $p \in \llbracket N+1, N+M \rrbracket$ and $p \neq \alpha$.) 

We next expand $G_{\alpha\alpha}$ around $(-t_{\alpha}^{-1}+\tau)^{-1}$. (Note that $\limsup t_{\alpha} \tau < 1$, thus $t_{\alpha}^{-1} - \tau > c > 0$ for some constant $c$ independent of $N$.) From the large deviation estimates in Lemma~\ref{lemma large deviation estimates} and the Ward identity~\eqref{Ward}, we have
\begin{align}
\left| \sum_{p, q} x_{\alpha p}G^{(\alpha)}_{pq}x_{\alpha q}+\tau \right| \prec \Psi\,.
\end{align}
Returning to~\eqref{le what we get from schur}, we thus have
\begin{multline*}
G_{\alpha\alpha}=\frac{1}{-t_{\alpha}^{-1} +\tau}+ \frac{1}{(-t_{\alpha}^{-1} +\tau)^2}\left(\sum_{p, q} x_{\alpha p}G^{(\alpha)}_{pq}x_{\alpha q}+\tau\right)\\ + \frac{1}{(-t_{\alpha}^{-1} +\tau)^3}\left(\sum_{p, q} x_{\alpha p}G^{(\alpha)}_{pq}x_{\alpha q}+\tau\right)^2
+ \caO(\Psi^3) \,,
\end{multline*}
respectively,
\begin{multline*}
G_{\alpha\alpha}^2 =\frac{1}{(t_\alpha^{-1} -\tau)^2} - \frac{2}{(t_{\alpha}^{-1} -\tau)^3} \left(\sum_{p, q} x_{\alpha p}G^{(\alpha)}_{pq}x_{\alpha q}+\tau\right)\\+  \frac{3}{(t_{\alpha}^{-1} -\tau)^4}\left(\sum_{p, q} x_{\alpha p}G^{(\alpha)}_{pq}x_{\alpha q}+\tau\right)^2 
 + \caO(\Psi^3)\,.
\end{multline*}
Hence, from the resolvent identity~\eqref{one-sided 2}, obtain the following expansion of $G_{i\alpha}G_{\alpha i}$ in the lower index~$\alpha$,
\begin{align*}
G_{i \alpha} G_{\alpha i}&= \frac{1}{(t_{\alpha}^{-1} -\tau)^2} \sum_{s, t}G_{is}^{(\alpha)}x_{\alpha s}x_{\alpha t} G_{ti}^{(\alpha)}\nonumber\\ &\quad -\frac{2}{(t_{\alpha}^{-1} -\tau)^3}\left(\sum_{p,q}^{(\alpha)}x_{\alpha p}G^{(\alpha)}_{pq}x_{\alpha q}+\tau\right) \sum_{s,t}G_{is}^{(\alpha)}x_{\alpha s}x_{\alpha t} G_{ti}^{(\alpha)} \nonumber \\
&\quad +\frac{3}{(t_{\alpha}^{-1} -\tau)^4} \left(\sum_{p, q} x_{\alpha p}G^{(\alpha)}_{pq}x_{\alpha q}+\tau\right)^2 \sum_{s,t}G_{is}^{(\alpha)}x_{\alpha s}x_{\alpha t} G_{ti}^{(\alpha)} + \caO(\Psi^5) \,.
\end{align*}
Taking the partial expectation $\E_\alpha$ we get
\begin{align} \label{1+2+3+4+5+6+7}
&\E_{\alpha}\, [G_{i\alpha}G_{\alpha i}] =\frac{1}{(t_{\alpha}^{-1} -\tau)^2} \frac{1}{N} \sum_s G_{is}^{(\alpha)} G_{si}^{(\alpha)} \nonumber \\
&\quad -\frac{2}{(t_{\alpha}^{-1} -\tau)^3} \left(m^{(\alpha)}+\tau\right) \frac{1}{N}\sum_s G_{is}^{(\alpha)}G_{si}^{(\alpha)} -\frac{4}{(t_{\alpha}^{-1} -\tau)^3} \frac{1}{N^2} \sum_{s, t}G^{(\alpha)}_{is}G^{(\alpha)}_{st}G^{(\alpha)}_{ti}\nonumber \\
&\quad+\frac{3}{(t_{\alpha}^{-1} -\tau)^4} \left(m^{(\alpha)}+\tau\right)^2 \frac{1}{N}\sum_s G_{is}^{(\alpha)}G_{si}^{(\alpha)} + \frac{12}{(t_{\alpha}^{-1} -\tau)^4} \left(m^{(\alpha)}+\tau\right) \frac{1}{N^2}\sum_{s, t} G_{is}^{(\alpha)} G_{st}^{(\alpha)} G_{ti}^{(\alpha)} \nonumber \\
&\quad +\frac{6}{(t_{\alpha}^{-1} -\tau)^4} \frac{1}{N^3} \sum_{s, p, q} G_{is}^{(\alpha)} G_{si}^{(\alpha)} G_{pq}^{(\alpha)} G_{qp}^{(\alpha)} + \frac{24}{(t_{\alpha}^{-1} -\tau)^4} \frac{1}{N^3} \sum_{s, p, q} G_{is}^{(\alpha)} G_{sp}^{(\alpha)} G_{pq}^{(\alpha)} G_{qi}^{(\alpha)}+ \caO(\Psi^5) \,.
\end{align}

In a next step, we expand~\eqref{1+2+3+4+5+6+7} in the upper index~$\alpha$ by using the resolvent formula~\eqref{basic resolvent}, {\it i.e.},
\begin{align}\label{le how to remove upper index}
 G_{is}^{(\alpha)}=G_{is}-\frac{G_{i\alpha}G_{\alpha s}}{G_{\alpha\alpha}}\,.
\end{align}
In other words, using~\eqref{le how to remove upper index}, we can remove the upper index~$\alpha$ from the Green functions entries in~\eqref{1+2+3+4+5+6+7} at the expense of higher order terms containing~$\alpha$ as a lower index in the Green function entries. We obtain for the first term in~\eqref{1+2+3+4+5+6+7} that
\begin{align} \label{11+12+13+14}
G^{(\alpha)}_{is} G^{(\alpha)}_{si} &= G_{is} G_{si} - \frac{G_{i \alpha} G_{\alpha s}}{G_{\alpha\alpha}} G_{si} - G^{(\alpha)}_{is} \frac{G_{s \alpha} G_{\alpha i}}{G_{\alpha\alpha}} \nonumber\\
&= G_{is} G_{si} - \frac{G_{i \alpha} G_{\alpha s}}{G_{\alpha\alpha}} G^{(\alpha)}_{si} - G^{(\alpha)}_{is} \frac{G_{s \alpha} G_{\alpha i}}{G_{\alpha\alpha}} - \frac{G_{i \alpha} G_{\alpha s}}{G_{\alpha\alpha}} \frac{G_{s \alpha} G_{\alpha i}}{G_{\alpha\alpha}}\,. 
\end{align}
We stop expanding the first term on the right side of~\eqref{11+12+13+14}, since it does not contain the index~$\alpha$, and we set
\begin{align} \label{X_22}
X_{22} \deq \frac{1}{N} \sum_{s} G_{is} G_{si}\,.
\end{align}

Using~\eqref{onesided mixed}, the partial expectation of the second term on the right side of~\eqref{11+12+13+14} can be expanded in the lower index~$\alpha$ to get
\begin{align} \label{121+122+123}
\E_{\alpha} \left[ \frac{G_{i \alpha} G_{\alpha s}}{G_{\alpha\alpha}} G^{(\alpha)}_{si} \right]& = \E_{\alpha} \left[ G_{\alpha\alpha} \sum_{k, l} G^{(\alpha)}_{ik} x_{\alpha k} x_{\alpha l} G^{(\alpha)}_{ls} G^{(\alpha)}_{si} \right] \nonumber\\
&= -\frac{1}{t_{\alpha}^{-1} -\tau} \frac{1}{N} \sum_k G_{ik}^{(\alpha)} G^{(\alpha)}_{ks} G^{(\alpha)}_{si}\nonumber\\ &\qquad + \frac{1}{(t_{\alpha}^{-1} -\tau)^2} \left(m^{(\alpha)}+\tau\right) \frac{1}{N} \sum_k G^{(\alpha)}_{ik} G^{(\alpha)}_{ks} G^{(\alpha)}_{si} \nonumber \\
&\qquad + \frac{2}{(t_{\alpha}^{-1} -\tau)^2} \frac{1}{N^2} \sum_{k, l} G^{(\alpha)}_{ik} G^{(\alpha)}_{kl} G^{(\alpha)}_{ls} G^{(\alpha)}_{si} + \caO(\Psi^5)\,.
\end{align}
Expanding the first term in the right side of~\eqref{121+122+123} further using~\eqref{basic resolvent}, we get
\begin{align} \label{1211+1212+1213+1214}
G^{(\alpha)}_{ik} G^{(\alpha)}_{ks} G^{(\alpha)}_{si} = G_{ik} G_{ks} G_{si} - \frac{G_{i \alpha} G_{\alpha k}}{G_{\alpha\alpha}} G_{ks} G_{si} - G^{(\alpha)}_{ik} \frac{G_{k \alpha} G_{\alpha s}}{G_{\alpha\alpha}} G_{si} - G^{(\alpha)}_{ik} G^{(\alpha)}_{ks} \frac{G_{s \alpha} G_{\alpha i}}{G_{\alpha\alpha}}\,.
\end{align}
We stop expanding the first term on the right side of~\eqref{1211+1212+1213+1214}, since it does no more contain the index~$a$, and we let
\begin{align} \label{X_33}
X_{33} \deq \frac{1}{N^2} \sum_{k, s} G_{ik} G_{ks} G_{si}\,.
\end{align}
Expanding the remaining terms on the right side of~\eqref{1211+1212+1213+1214} in the lower index~$\alpha$ using~\eqref{onesided mixed}, we obtain
\begin{align*}
\E_{\alpha} \left[ \frac{G_{i \alpha} G_{\alpha k}}{G_{\alpha\alpha}} G_{ks} G_{si} \right]& = -\frac{1}{t_{\alpha}^{-1} -\tau} \E_{\alpha} \left[ \sum_{l, m} G^{(\alpha)}_{il} x_{\alpha l} x_{\alpha m} G^{(\alpha)}_{mk} G_{ks} G_{si} \right] + \caO(\Psi^5)\nonumber \\
&= -\frac{1}{t_{\alpha}^{-1} -\tau} \frac{1}{N} \sum_l G^{(\alpha)}_{il} G^{(\alpha)}_{lk} G_{ks} G_{si} + \caO(\Psi^5)\nonumber \\&= -\frac{1}{t_{\alpha}^{-1} -\tau} \frac{1}{N} \sum_l G_{il} G_{lk} G_{ks} G_{si} + \caO(\Psi^5)
\end{align*}
and, similarly,
$$
\E_{\alpha} \left[ G^{(\alpha)}_{ik} \frac{G_{k \alpha} G_{\alpha s}}{G_{\alpha\alpha}} G_{si} \right] = -\frac{1}{t_{\alpha}^{-1} -\tau} \frac{1}{N} \sum_l G_{ik} G_{kl} G_{ls} G_{si} + \caO(\Psi^5)
$$
respectively,
$$
\E_{\alpha} \left[ G^{(\alpha)}_{ik} G^{(\alpha)}_{ks} \frac{G_{s \alpha} G_{\alpha i}}{G_{\alpha\alpha}} \right] = -\frac{1}{t_{\alpha}^{-1} -\tau} \frac{1}{N} \sum_l G_{ik} G_{ks} G_{sl} G_{li} + \caO(\Psi^5)\,.
$$
Thus, setting
\begin{align} \label{X_44}
X_{44} \deq \frac{1}{N^3} \sum_{k, l, s} G_{ik} G_{kl} G_{ls} G_{si}\,,
\end{align}
we have
\begin{align} \label{121}
\E_{\alpha} \left[ \frac{1}{N^2} \sum_{k, s} G_{ik}^{(\alpha)} G^{(\alpha)}_{ks} G^{(\alpha)}_{si} \right] = X_{33} + \frac{3}{t_{\alpha}^{-1} -\tau} X_{44} + \caO(\Psi^5)\,.
\end{align}

Next, we consider the $\caO(\Psi^4)$ terms on the right side of~\eqref{121+122+123}. Let
\begin{align} \label{X_43}
X_{43} \deq \left(m+\tau\right) \frac{1}{N^2} \sum_{k, s} G_{ik} G_{ks} G_{si}\,.
\end{align}
Then, we have for the second term on the right side of~\eqref{121+122+123} that
\begin{align} \label{122}
\left(m^{(\alpha)}+\tau\right) \frac{1}{N^2} \sum_{k, s} G^{(\alpha)}_{ik} G^{(\alpha)}_{ks} G^{(\alpha)}_{si} = X_{43} + \caO(\Psi^5)\,.
\end{align}
The last term on the right side of~\eqref{121+122+123} is simply estimated by
\begin{align} \label{123}
\frac{1}{N^3} \sum_{k, l, s} G^{(\alpha)}_{ik} G^{(\alpha)}_{kl} G^{(\alpha)}_{ls} G^{(\alpha)}_{si} = X_{44} + \caO(\Psi^5)\,.
\end{align}
In sum, we find 
\begin{align} \label{12}
\E_{\alpha} \left[ \frac{1}{N} \sum_{s} \frac{G_{i \alpha} G_{\alpha s}}{G_{\alpha\alpha}} G^{(\alpha)}_{si} \right] = -\frac{1}{t_{\alpha}^{-1} -\tau} X_{33} + \frac{1}{(t_{\alpha}^{-1} -\tau)^2} X_{43} - \frac{1}{(t_{\alpha}^{-1} -\tau)^2} X_{44} + \caO(\Psi^5)\,.
\end{align}
Similarly, we also have
\begin{align} \label{13}
\E_{\alpha} \left[ \frac{1}{N} \sum_{s} G^{(\alpha)}_{is} \frac{G_{s \alpha} G_{\alpha i}}{G_{\alpha\alpha}} \right] = -\frac{1}{t_{\alpha}^{-1} -\tau} X_{33}+ \frac{1}{(t_{\alpha}^{-1} -\tau)^2} X_{43} - \frac{1}{(t_{\alpha}^{-1} -\tau)^2} X_{44} + \caO(\Psi^5)\,.
\end{align}

For the last term in~\eqref{11+12+13+14}, we obtain
\begin{align*}
\frac{G_{i \alpha} G_{\alpha s}}{G_{\alpha\alpha}} \frac{G_{s \alpha} G_{\alpha i}}{G_{\alpha\alpha}} = G_{\alpha\alpha}^2 \sum_{k, l, p, q} G^{(\alpha)}_{ik} x_{\alpha k} x_{\alpha l} G^{(\alpha)}_{ls} G^{(\alpha)}_{sp} x_{\alpha p} x_{\alpha q} G^{(\alpha)}_{qi}\,.
\end{align*}
Hence, denoting
\begin{align} \label{X_44'}
X_{44}' \deq \frac{1}{N^3} \sum_{k, l, s} G_{is} G_{si} G_{kl} G_{lk}\,,
\end{align}
we find
\begin{align} \label{14}
\E_{\alpha} \left[ \frac{1}{N} \sum_{s} \frac{G_{i \alpha} G_{\alpha s}}{G_{\alpha\alpha}} \frac{G_{s \alpha} G_{\alpha i}}{G_{\alpha\alpha}} \right] = \frac{2}{(t_{\alpha}^{-1} -\tau)^2} X_{44} + \frac{1}{(t_{\alpha}^{-1} -\tau)^2} X_{44}' + \caO(\Psi^5)\,.
\end{align}

Thus, from~\eqref{11+12+13+14},~\eqref{12},~\eqref{13} and~\eqref{14} we obtain
\begin{multline} \label{1}
\E_{\alpha} \left[ \frac{1}{(t_{\alpha}^{-1} -\tau)^2} \frac{1}{N} \sum_{s} G_{is}^{(\alpha)} G_{si}^{(\alpha)} \right] = \frac{1}{(t_{\alpha}^{-1} -\tau)^2} X_{22} +\frac{2}{(t_{\alpha}^{-1} -\tau)^3} X_{33}\\ - \frac{2}{(t_{\alpha}^{-1} -\tau)^4} X_{43} - \frac{1}{(t_{\alpha}^{-1} -\tau)^4} X_{44}' + \caO(\Psi^5)\,,
\end{multline}
which completes the expansion of the first term in~\eqref{1+2+3+4+5+6+7}. The calculation and the result coincide with those in the deformed Wigner case in~\cite{LSY}, except the sign of the $X_{33}$ term. The discrepancy is due to the sign difference in the coefficient $(t_{\alpha}^{-1} -\tau)^{-1}$.

Adapting the expansion procedure of~\cite{LSY} we conclude, with the definitions
\begin{align} \label{X_32}
X_{32} \deq \left(m+\tau\right) \frac{1}{N} \sum_{s} G_{is} G_{si}\,,\qquad\quad X_{42} \deq\left( m+\tau\right)^2 \frac{1}{N} \sum_{s} G_{is} G_{si}\,,
\end{align}
that
\begin{multline}\label{expansion 1}
\E_{\alpha} [G_{i\alpha}G_{\alpha i}] = \frac{1}{(t_{\alpha}^{-1} -\tau)^2} X_{22} - \frac{2}{(t_{\alpha}^{-1} -\tau)^3} X_{32} - \frac{2}{(t_{\alpha}^{-1} -\tau)^3} X_{33} + \frac{3}{(t_{\alpha}^{-1} -\tau)^4} X_{42} \\+ \frac{6}{(t_{\alpha}^{-1} -\tau)^4} X_{43} + \frac{12}{(t_{\alpha}^{-1} -\tau)^4} X_{44}+ \frac{3}{(t_{\alpha}^{-1} -\tau)^4} X_{44}' + \caO(\Psi^5) \,.
\end{multline}
This shows~\eqref{le expansion in first lemma} and hence completes the proof of Lemma~\ref{le expansion in first lemma}.

\end{proof}

Before we move on to the next section, we introduce some more notation. For $k\in\N$, let
\begin{align}\label{le definition of Ak}
A_k \deq \frac{1}{N} \sum_{\rho} \frac{1}{(t_{\rho}^{-1} -\tau)^k}\,.
\end{align}
We remark that from~\eqref{tau 2}, we have
\begin{align}\label{le relation A2 and A3}
A_2 = \tau^{-2}\,,\qquad\qquad A_3+\tau^{-3}=1\,.
 \end{align}

Finally, averaging~\eqref{expansion 1} over~$\alpha$, we have in this notation
\begin{align} \label{expansion 1'}
\frac{1}{N} \sum_{\alpha} \E_{\alpha} \,[G_{i\alpha}G_{\alpha i}] = A_2 X_{22} - 2 A_3 ( X_{32} + X_{33} )+ 3 A_4 (X_{42} + 2 X_{43} + 4 X_{44} + X_{44}' ) + \caO(\Psi^5) \,.
\end{align}
This concludes the current appendix.

\section{Optical theorems}\label{sec:optical theorems}
In this section, we establish the following ``optical theorem''. 
\begin{lemma}\label{le lemma optical theorem}
 Under the assumptions of Proposition~\ref{prop green} with the notation of Lemma~\ref{le first lemma}, we have
\begin{align}\label{le eq lemma optical theorem}
2\E\, [X_{32}+X_{33}]-\frac{1}{N} &= 3(A_4 - \tau^{-4}) \E\,[X_{42}+2X_{43}+4X_{44}+X_{44}'] + \caO(\Psi^5)\,,
\end{align}
uniformly in $t\ge 0$.
\end{lemma}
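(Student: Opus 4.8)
The statement~\eqref{le eq lemma optical theorem} is an ``optical theorem'' in the spirit of~\cite{LSY}: in expectation, and up to the explicit Ward-type term $1/N$, the $\caO(\Psi^3)$ combination $X_{32}+X_{33}$ is slaved to the $\caO(\Psi^4)$ combination $X_{42}+2X_{43}+4X_{44}+X_{44}'$ with the prefactor $3(A_4-\tau^{-4})$. The plan is to evaluate the weighted sum $\tfrac1N\sum_{\alpha}t_{\alpha}^{-1}\,\E_{\alpha}[G_{i\alpha}G_{\alpha i}]$ in two ways and to compare, the sum rules~\eqref{tau 2} producing the cancellation.

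First I would record an exact identity. From $HG=1$ and the block structure~\eqref{le H} one has, for $i\in\llbracket1,N\rrbracket$, the relations $\sum_{\alpha}x_{\alpha i}G_{\alpha i}=1+zG_{ii}$ (from $(HG)_{ii}=1$) and $\sum_{s}G_{is}x_{\alpha s}=t_{\alpha}^{-1}G_{i\alpha}$ (from $(GH)_{i\alpha}=0$). Multiplying $(HG)_{si}=\delta_{si}$ by $N^{-1}G_{is}$, summing over $s$ and inserting these two relations yields
\begin{align*}
\frac1N\sum_{\alpha}t_{\alpha}^{-1}G_{i\alpha}G_{\alpha i}=\frac1N G_{ii}+z\,X_{22}\,,
\end{align*}
with $X_{22}$ as in~\eqref{X_22}. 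On the other hand, multiplying the expansion~\eqref{le expansion in first lemma} of Lemma~\ref{le first lemma} by $t_{\alpha}^{-1}/N$ and summing over $\alpha$, using $t_{\alpha}^{-1}=(t_{\alpha}^{-1}-\tau)+\tau$ (so that $\tfrac1N\sum_{\alpha}t_{\alpha}^{-1}(t_{\alpha}^{-1}-\tau)^{-k}=A_{k-1}+\tau A_{k}$, recall~\eqref{le definition of Ak}) and noting that the error stays $\caO(\Psi^5)$ after the $\alpha$-average because $t_{\alpha}^{-1}\sim1$ by Assumption~\ref{assumptions for local MP law}, one gets, with $Y_3\deq X_{32}+X_{33}$ and $Y_4\deq X_{42}+2X_{43}+4X_{44}+X_{44}'$,
\begin{align*}
\frac1N\sum_{\alpha}t_{\alpha}^{-1}\,\E_{\alpha}[G_{i\alpha}G_{\alpha i}]=(A_1+\tau A_2)X_{22}-2(A_2+\tau A_3)Y_3+3(A_3+\tau A_4)Y_4+\caO(\Psi^5)\,.
\end{align*}

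Taking expectations in these two displays, equating, and inserting the sum rules $A_2=\tau^{-2}$, $A_3=1-\tau^{-3}$ of~\eqref{le relation A2 and A3} (so $A_2+\tau A_3=\tau$) together with $A_1+\tau A_2=A_1+\tau^{-1}=L_+$ from~\eqref{L_+}, one is left with
\begin{align*}
2\tau\,\E Y_3=(L_+-z)\,\E X_{22}-\tfrac1N\E[G_{ii}]+3\bigl(1-\tau^{-3}+\tau A_4\bigr)\E Y_4+\caO(\Psi^5)\,.
\end{align*}
Dividing by $\tau$, using $\tfrac3\tau(1-\tau^{-3}+\tau A_4)=3(A_4-\tau^{-4})+\tfrac3\tau$, and writing $\E[G_{ii}]=-\tau+\E[G_{ii}+\tau]$ (since $G_{ii}=\widetilde m_{\fc}(z,t)+\caO(\Psi)$ by the local law and $\widetilde m_{\fc}(L_+,t)=-\tau$), so that $-\tfrac1{N\tau}\E[G_{ii}]=\tfrac1N-\tfrac1{N\tau}\E[G_{ii}+\tau]$, this is precisely~\eqref{le eq lemma optical theorem} provided one proves the residual estimate
\begin{align*}
\tfrac1\tau(L_+-z)\,\E X_{22}+\tfrac3\tau\,\E Y_4-\tfrac1{N\tau}\,\E[G_{ii}+\tau]=\caO(\Psi^5)\,.
\end{align*}
Since $|L_+-z|\le CN^{-2/3+\epsilon}$, $\E X_{22}\prec\Psi^2$, $\E Y_4\prec\Psi^4$ and $\E[G_{ii}+\tau]\prec\Psi$, each summand is a priori only $\caO(\Psi^4)$, so this residual estimate is itself a cancellation statement; multiplied by $N\tau$ and using $NX_{22}=\partial_zG_{ii}$ it reads $(L_+-z)\,\E[\partial_zG_{ii}]+3N\,\E Y_4-\E[G_{ii}+\tau]=\caO(\Psi^2)$. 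I would establish it by computing the leading behaviours of the three pieces from the Schur self-consistent equation~\eqref{schur} for $G_{ii}$, differentiated in $z$, using the local laws (Lemmas~\ref{lemma local law},~\ref{local law Greek}) and fluctuation averaging for $\E[G_{ii}]$, the square-root behaviour of $\widetilde m_{\fc}(\cdot,t)$ near $L_+(t)$ (Remark~\ref{rho_t}), which gives $(L_+-z)\,\partial_z\widetilde m_{\fc}(z,t)=-\tfrac12(\widetilde m_{\fc}(z,t)+\tau)+\caO(|z-L_+|)$, and the sum rules~\eqref{tau 2} once more to identify $N\,\E Y_4=\tfrac12(\widetilde m_{\fc}(z,t)+\tau)+\caO(\Psi^2)$.

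The main obstacle is this residual estimate: since its three terms are genuinely of the same order the cancellation is tight, so pinning it down requires a careful second-order expansion of~\eqref{schur} together with the precise a priori bounds quoted above. In carrying out that expansion one must, exactly as in the proof of Lemma~\ref{le first lemma}, check that the fourth-cumulant contributions of the entries of $X$ are $\caO(\Psi^5)$, which uses $\sum_{s\ne i}|G_{is}|^2\prec N\Psi^2$ (Ward identity~\eqref{Ward} and the local law) together with $\E\,|\sqrt N\,x_{\alpha i}|^{4}\le C$.
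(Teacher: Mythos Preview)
Your first move---the exact identity $\tfrac1N\sum_\alpha t_\alpha^{-1}G_{i\alpha}G_{\alpha i}=\tfrac1N G_{ii}+zX_{22}$ combined with the $t_\alpha^{-1}$-weighted average of the expansion~\eqref{le expansion in first lemma}---is correct and is in fact a cleaner route than the paper's Subsection~B.1: the paper instead expands $\E_s[G_{is}G_{si}]$ in the Roman index $s$ (the mirror of Lemma~\ref{le first lemma}) and then laboriously converts the resulting Greek-index terms back to Roman ones to arrive at~\eqref{X_22 ward'}. Your identity bypasses that double expansion and lands directly at an equation equivalent to~\eqref{X_22 ward}. So the reduction to the ``residual estimate'' $(L_+-z)\E X_{22}+3\E Y_4-\tfrac1N\E[G_{ii}+\tau]=\caO(\Psi^5)$ is sound and is a genuine simplification.

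The gap is in how you propose to prove that residual. The paper proves it (with $m$ in place of $G_{ii}$) by deriving \emph{three} further optical theorems---\eqref{X_32 ward}, \eqref{X_33 ward}, \eqref{mX_22 ward}---each obtained from a separate Green-function expansion (of $X_{32}$, $X_{33}$, $mX_{22}$ respectively), and your residual is precisely a linear combination of those three. Your alternative---matching leading $\widetilde m_{\fc}$-asymptotics near the edge---does correctly predict the $\caO(\Psi)$-level cancellation, but that is not enough: after subtracting the deterministic leading parts you are left with random fluctuation terms like $\E[G_{ii}-\widetilde m_{\fc}]$, $(L_+-z)\E[\partial_zG_{ii}-\partial_z\widetilde m_{\fc}]$ and $N\E Y_4-\tfrac12(\widetilde m_{\fc}+\tau)$, each of which is a priori only $\caO(\Psi)$ from the local law, and you need them to be $\caO(\Psi^2)$. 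In particular, the asserted identity $N\E Y_4=\tfrac12(\widetilde m_{\fc}+\tau)+\caO(\Psi^2)$ is itself an optical theorem of the same depth as what you are trying to prove; the sum rules~\eqref{tau 2} alone do not deliver it. A secondary issue is that your residual carries $\E[G_{ii}+\tau]$ rather than $\E[m+\tau]$, and since Definition~\ref{assumption sample} does not assume identically distributed columns, $\E[G_{ii}-m]=\caO(\Psi^2)$ is not free either.
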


Lemma~\ref{le lemma optical theorem} is an example of what we call optical theorems: optical theorems assure that the expectations of certain linear combinations of the random variables introduced in Lemma~\ref{le first lemma} are smaller than their naive sizes obtained from power counting using the local laws in Lemma~\ref{lem:square root for hat mfc} and Lemma~\ref{local law Greek}. Such estimates were key technical inputs in the proof of edge universality for deformed Wigner matrices in~\cite{LSY}. As in \cite{LSY}, the optical theorems used in this paper are obtained by combining expansions of random variables, {\it e.g.}, $X_{22}$ or $X_{33}$, with the sum rules in~\eqref{tau 2}. In the rest of this appendix, we derive the required optical theorems. 

The proof of Lemma~\ref{le lemma optical theorem} is given in the Subsection~\ref{last subsection optical theorems} based on estimates obtained in the Subsections~\ref{le subsection OT X22},~\ref{le subsection OT X32 X33} and~\ref{le subsection OT mX22}.

\subsection{Optical theorem from $X_{22}$}\label{le subsection OT X22}

To derive the first optical theorem, we consider
\begin{align}
\sum_{s}G_{is}G_{si}=G_{ii}^2+\sum_{s}^{(i)}G_{is}G_{si}\,.
\end{align}
Similar to the expansion of $G_{\alpha\alpha}$, we now expand $G_{ss}$ around $-\tau$. We notice that
$$
\bigg|\tau^{-1} -z -\sum_{\gamma, \delta} x_{\gamma s} G^{(s)}_{\gamma\delta} x_{\delta s} \bigg| \prec \Psi\,,
$$
which can be checked from~\eqref{L_+} and the estimate
$$
\bigg| G_{\alpha\alpha} - \frac{1}{-t_{\alpha}^{-1} +\tau} \bigg| \prec \Psi \,.
$$
Thus, using Schur's complement formula~\eqref{schur}, we obtain the following expansion of $G_{ss}$ in the lower index~$s$,
\begin{align*}
G_{ss} &= \frac{1}{h_{ss}-\sum_{\gamma, \delta}^{(s)} h_{\gamma s} G^{(s)}_{\gamma\delta} h_{s \delta}} = \frac{1}{-\tau^{-1} + \tau^{-1} -z -\sum_{\gamma, \delta} x_{\gamma s} G^{(s)}_{\gamma\delta} x_{\delta s}}\nonumber \\
&= -\tau - \tau^2 \left( \tau^{-1} -z -\sum_{\gamma, \delta} x_{\gamma s} G^{(s)}_{\gamma\delta} x_{\delta s} \right) - \tau^3 \left( \tau^{-1} -z -\sum_{\gamma, \delta} x_{\gamma s} G^{(s)}_{\gamma\delta} x_{\delta s} \right)^2 + \caO(\Psi^3)\,.
\end{align*}

Using the resolvent formula~\eqref{onesided} we therefore get the following expansion of $G_{is}G_{si}$  in the lower index~$s$, for~$s \neq i$,
\begin{align*}
&G_{is}G_{si}=G_{ss}^2 \sum_{\rho, \sigma} G^{(s)}_{i \rho} x_{\rho s} x_{\sigma s} G^{(s)}_{\sigma i} \nonumber \\
&\quad= \tau^2 \sum_{\rho, \sigma} G^{(s)}_{i \rho} x_{\rho s} x_{\sigma s} G^{(s)}_{\sigma i} + 2 \tau^3 \left( \tau^{-1} -z -\sum_{\gamma, \delta} x_{\gamma s} G^{(s)}_{\gamma\delta} x_{\delta s} \right) \sum_{\rho,\sigma} G^{(s)}_{i \rho} x_{\rho s} x_{\sigma s} G^{(s)}_{\sigma i} \nonumber\\
&\qquad\qquad + 3 \tau^4 \left( \tau^{-1} -z -\sum_{\gamma, \delta} x_{\gamma s} G^{(s)}_{\gamma\delta} x_{\delta s} \right)^2 \sum_{\rho,\sigma} G^{(s)}_{i \rho} x_{\rho s} x_{\sigma s} G^{(s)}_{\sigma i} + \caO(\Psi^5)\,.
\end{align*}
Taking the partial expectation $\E_s$, we obtain, for $s\not=i$,
\begin{align} \label{Ward1}
&\E_s [G_{is}G_{si}] = \frac{\tau^2}{N} \sum_{\rho} G^{(s)}_{i \rho} G^{(s)}_{\rho i} + \frac{2 \tau^3}{N} \left( \tau^{-1} -z - \frac{\widetilde m^{(s)}}{d} \right) \sum_{\rho} G^{(s)}_{i \rho} G^{(s)}_{\rho i} \nonumber \\ &\qquad- \frac{4 \tau^3}{N^2} \sum_{\rho, \sigma} G^{(s)}_{i \rho} G^{(s)}_{\rho \sigma} G^{(s)}_{\sigma i} + \frac{3 \tau^4}{N} \left( \tau^{-1} -z - \frac{\widetilde m^{(s)}}{d} \right)^2 \sum_{\rho} G^{(s)}_{i \rho} G^{(s)}_{\rho i}\nonumber \\
&\qquad- \frac{12 \tau^4}{N^2} \left( \tau^{-1} -z - \frac{\widetilde m^{(s)}}{d} \right) \sum_{\rho, \sigma} G^{(s)}_{i \rho} G^{(s)}_{\rho \sigma} G^{(s)}_{\sigma i} \nonumber \\
&\qquad + \frac{6 \tau^4}{N^3} \sum_{\rho, \sigma, \gamma} G^{(s)}_{i \rho} G^{(s)}_{\rho i} G^{(s)}_{\sigma\gamma} G^{(s)}_{\gamma\sigma} + \frac{24 \tau^4}{N^3} \sum_{\rho, \sigma, \gamma} G^{(s)}_{i \rho} G^{(s)}_{\rho \gamma} G^{(s)}_{\gamma\sigma} G^{(s)}_{\sigma i} + \caO(\Psi^5)\,.
\end{align}

Using the resolvent formula~\eqref{bad resolvent} to remove the upper indices~$s$ in~\eqref{Ward1}, we get, for $s \neq i$,
\begin{align} \label{expansion 2}
\E_s [G_{is} G_{si}] &= \frac{\tau^2}{N} \sum_{\rho} G_{i \rho} G_{\rho i} + \frac{2 \tau^3}{N} \left( \tau^{-1} -z - \frac{\widetilde m}{d} \right) \sum_{\rho} G_{i \rho} G_{\rho i}\nonumber\\ &\quad - \frac{2 \tau^3}{N^2} \sum_{\rho, \sigma} G_{i \rho} G_{\rho \sigma} G_{\sigma i} + \frac{3 \tau^4}{N} \left( \tau^{-1} -z - \frac{\widetilde m}{d} \right)^2 \sum_{\rho} G_{i \rho} G_{\rho i}\nonumber \\
&   \quad- \frac{6 \tau^4}{N^2} \left( \tau^{-1} -z - \frac{\widetilde m}{d} \right) \sum_{\rho, \sigma} G_{i \rho} G_{\rho \sigma} G_{\sigma i}\nonumber \\
& \quad+ \frac{12 \tau^4}{N^3} \sum_{\rho, \sigma, \gamma} G_{i \rho} G_{\rho \sigma} G_{\sigma \gamma} G_{\gamma i} + \frac{3 \tau^4}{N^3} \sum_{\rho, \sigma, \gamma} G_{i \rho} G_{\gamma \sigma} G_{\sigma \gamma} G_{\rho i} + \caO(\Psi^5)\,. 
\end{align}
We next expand all terms on the right side of~\eqref{expansion 2} except the first one to change Greek indices into Roman indices. Recall from~\eqref{le definition of Ak} that
\begin{align}
A_k = \frac{1}{N} \sum_{\rho} \frac{1}{(t_{\rho}^{-1} -\tau)^k}\,.
\end{align}

The last two terms on the right side of~\eqref{expansion 2} are easy to convert. For example,
\begin{align*}
G_{i \rho} G_{\rho \sigma} G_{\sigma \gamma} G_{\gamma i}& = G_{i \rho} G_{\rho \sigma} G^{(\rho)}_{\sigma \gamma} G^{(\rho)}_{\gamma i} + \caO(\Psi^5)\nonumber\\& = \frac{1}{(t_{\rho}^{-1} -\tau)^2} \sum_{j, k} G^{(\rho)}_{ij} x_{\rho j} x_{\rho k} G^{(\rho)}_{k \sigma} G^{(\rho)}_{\sigma \gamma} G^{(\rho)}_{\gamma i} + \caO(\Psi^5)\,,
\end{align*}
which shows that
\begin{align}
\E_{\rho} [ G_{i \rho} G_{\rho \sigma} G_{\sigma \gamma} G_{\gamma i}] = \frac{1}{(t_{\rho}^{-1} -\tau)^2} \frac{1}{N} \sum_j G_{ij} G_{j \sigma} G_{\sigma \gamma} G_{\gamma i} + \caO(\Psi^5)\,.
\end{align}
Repeating the argument once more we also find, using~\eqref{tau 2}, that
\begin{align} \label{w5}
\E \left[ \frac{12 \tau^4}{N^3} \sum_{\rho, \sigma, \gamma} G_{i \rho} G_{\rho \sigma} G_{\sigma \gamma} G_{\gamma i} \right]& = \frac{12 \tau^4}{N^3} A_2^3 \,\E \left[ \sum_{j, k, l} G_{ij} G_{jk} G_{kl} G_{li} \right] + \caO(\Psi^5) \nonumber \\
&= 12 \tau^{-2} \,\E\,[ X_{44}] + \caO(\Psi^5)\,.
\end{align}
Similarly,
\begin{align} \label{w6}
\E \left[ \frac{3 \tau^4}{N^3} \sum_{\rho, \sigma, \gamma} G_{i \rho}G_{\rho i} G_{\gamma \sigma} G_{\sigma \gamma}  \right] = 3 \tau^{-2}\, \E\, [X_{44}' ]+ \caO(\Psi^5)\,.
\end{align}
The other fourth order terms in~\eqref{expansion 2} require more treatment. We first consider
\begin{align*}
\tau^{-1} -z - \frac{\widetilde m}{d} &= \tau^{-1} -z - \frac{1}{N} \sum_{\beta} G_{\beta\beta} \nonumber \\
&= \tau^{-1} -z + \frac{1}{N} \sum_{\beta} \frac{1}{t_{\beta}^{-1} -\tau} - \frac{1}{N} \sum_{\beta} \frac{1}{(t_{\beta}^{-1} -\tau)^2} \left( \sum_{p, q} x_{\beta p} G^{(\beta)}_{pq} x_{\beta q}+\tau \right) + \caO(\Psi^2)\nonumber \\
&= (L_+ - z) - \frac{1}{N} \sum_{\beta} \frac{1}{(t_{\beta}^{-1} -\tau)^2} \left( \sum_{p, q} x_{\beta p} G^{(\beta)}_{pq} x_{\beta q}+\tau \right) + \caO(\Psi^2) \nonumber \\
&= -\frac{1}{N} \sum_{\beta} \frac{1}{(t_{\beta}^{-1} -\tau)^2} \left( \sum_{p, q} x_{\beta p} G^{(\beta)}_{pq} x_{\beta q}+\tau \right) + \caO(\Psi^2)\,. 
\end{align*}
We then obtain for the fifth term on the right side of~\eqref{expansion 2} that
\begin{align} \label{w3}
&\E \left[ -\frac{6 \tau^4}{N^2} \left( \tau^{-1} -z - \frac{\widetilde m}{d} \right) \sum_{\rho, \sigma} G_{i \rho} G_{\rho \sigma} G_{\sigma i} \right] \nonumber \\
&\qquad= \E \left[ \frac{6 \tau^4}{N^2} \left( \frac{1}{N} \sum_{\beta} \frac{1}{(t_{\beta}^{-1} -\tau)^2} \right) (m^{(\beta)} +\tau) \sum_{\rho, \sigma} G^{(\beta)}_{i \rho} G^{(\beta)}_{\rho \sigma} G^{(\beta)}_{\sigma i} \right] + \caO(\Psi^5) \nonumber\\
&\qquad =6\tau^{-2}\E\left[(m+\tau)\frac{1}{N^2}\sum_{k,l}G_{ik}G_{kl}G_{ki}\right]+ \caO(\Psi^5)\nonumber \\
&\qquad= 6 \tau^{-2} \E\,[ X_{43}] + \caO(\Psi^5)\,.
\end{align}
Similarly, we have for the forth therm on the right side of~\eqref{expansion 2} that
\begin{align} \label{w4}
\E \left[ \frac{3 \tau^4}{N} \left( \tau^{-1} -z - \frac{\widetilde m}{d} \right)^2 \sum_{\rho} G_{i \rho} G_{\rho i} \right] = 3 \tau^{-2} \E\, [X_{42}] + \caO(\Psi^5)\,.
\end{align}
This completes the discussion of the fourth order terms in~\eqref{expansion 2}.

We move on to the third order terms on the right side of~\eqref{expansion 2}. Adapting the expansion method above, we note that
\begin{align} \label{w11+w12+w13}
&\frac{1}{N} \left( \tau^{-1} -z - \frac{\widetilde m}{d} \right) \sum_{\rho} G_{i \rho} G_{\rho i} \nonumber \\
&\quad= -\frac{1}{N^2} \sum_{\beta} \frac{1}{(t_{\beta}^{-1} -\tau)^2} \left( \sum_{p, q} x_{\beta p} G^{(\beta)}_{pq} x_{\beta q}+\tau \right) \sum_{\rho} G_{i \rho} G_{\rho i} \nonumber\\
&\qquad + \frac{1}{N^2} \sum_{\beta} \frac{1}{(t_{\beta}^{-1} -\tau)^3} \left( \sum_{p, q} x_{\beta p} G^{(\beta)}_{pq} x_{\beta q}+\tau \right)^2 \sum_{\rho} G_{i \rho} G_{\rho i} + \frac{L_+ - z}{N} \sum_{\rho} G_{i \rho} G_{\rho i}+ \caO(\Psi^5)\,.
\end{align}
Taking the partial expectation $\E_{\beta}$, we get for the summand in the first term on the right side of~\eqref{w11+w12+w13} that
\begin{align} \label{w111+w112+w113}
&\E_{\beta} \left[ \frac{1}{(t_{\beta}^{-1} -\tau)^2} \left( \sum_{p, q} x_{\beta p} G^{(\beta)}_{pq} x_{\beta q}+\tau \right) G_{i \rho} G_{\rho i} \right] \nonumber\\
&\qquad= \frac{1}{(t_{\beta}^{-1} -\tau)^2} \left( m^{(\beta)}+\tau \right) G^{(\beta)}_{i \rho} G^{(\beta)}_{\rho i}\nonumber\\ &\qquad\qquad + \E_{\beta} \left[ \frac{1}{(t_{\beta}^{-1} -\tau)^2} \left( \sum_{p, q} x_{\beta p} G^{(\beta)}_{pq} x_{\beta q}+\tau \right) \frac{G_{i \beta} G_{\beta\rho}}{G_{\beta\beta}} G^{(\beta)}_{\rho i} \right] \nonumber \\
&\qquad\qquad + \E_{\beta} \left[ \frac{1}{(t_{\beta}^{-1} -\tau)^2} \left( \sum_{p, q} x_{\beta p} G^{(\beta)}_{pq} x_{\beta q}+\tau \right) G^{(\beta)}_{i \rho} \frac{G_{\rho\beta} G_{\beta i}}{G_{\beta\beta}} \right] + \caO(\Psi^5)\,. 
\end{align}
Expanding the first term on the right side of~\eqref{w111+w112+w113} with respect to the upper index $\beta$, we find
\begin{align} \label{w111}
&\E \left[ \frac{1}{(t_{\beta}^{-1} -\tau)^2} \left( m^{(\beta)}+\tau \right) G^{(\beta)}_{i \rho} G^{(\beta)}_{\rho i} \right] \nonumber \\
&\quad= \E \left[ \frac{1}{(t_{\beta}^{-1} -\tau)^2} \left( m+\tau \right) G_{i \rho} G_{\rho i} \right] + \E \left[ \frac{2}{(t_{\beta}^{-1} -\tau)^3} \left( m+\tau \right) \sum_k G_{ik} G_{k \rho} G_{\rho i} \right]\nonumber \\
&\quad\qquad + \E \left[ \frac{1}{(t_{\beta}^{-1} -\tau)^3} \sum_{k, l} G_{kl} G_{lk} G_{i \rho} G_{\rho i} \right] + \caO(\Psi^5)\,. 
\end{align}
Similarly, we get for the expectation of the second term on the right side of~\eqref{w111+w112+w113} that
\begin{align} \label{w112}
&\E \left[ \frac{1}{(t_{\beta}^{-1} -\tau)^2} \left( \sum_{p, q} x_{\beta p} G^{(\beta)}_{pq} x_{\beta q}+\tau \right) \frac{G_{i \beta} G_{\beta\rho}}{G_{\beta\beta}} G^{(\beta)}_{\rho i} \right]\nonumber \\
&\quad= -\E \left[ \frac{1}{(t_{\beta}^{-1} -\tau)^3} \left( m+\tau \right) \sum_k G_{ik} G_{k \rho} G_{\rho i} \right] - \E \left[ \frac{2}{(t_{\beta}^{-1} -\tau)^3} \frac{1}{N^2}\sum_{k, l} G_{ik} G_{kl} G_{l \rho} G_{\rho i} \right]\nonumber\\ &\qquad\qquad + \caO(\Psi^5)
\end{align}
and for the expectation of the third term on the right side of~\eqref{w111+w112+w113} that
\begin{align} \label{w113}
&\E \left[ \frac{1}{(t_{\beta}^{-1} -\tau)^2} \left( \sum_{p, q} x_{\beta p} G^{(\beta)}_{pq} x_{\beta q}+\tau \right) G^{(\beta)}_{i \rho} \frac{G_{\rho\beta} G_{\beta i}}{G_{\beta\beta}} \right] \nonumber\\
&\quad= -\E \left[ \frac{1}{(t_{\beta}^{-1} -\tau)^3} \left( m+\tau \right) \sum_k G_{ik} G_{k \rho} G_{\rho i} \right] - \E \left[ \frac{2}{(t_{\beta}^{-1} -\tau)^3} \frac{1}{N^2}\sum_{k, l} G_{ik} G_{kl} G_{l \rho} G_{\rho i} \right] \nonumber\\ &\qquad\qquad+ \caO(\Psi^5)\,.
\end{align}
Thus, from~\eqref{w111+w112+w113},~\eqref{w111},~\eqref{w112} and~\eqref{w113} we find
\begin{align} \label{w11}
&\E \left[ \frac{1}{(t_{\beta}^{-1} -\tau)^2} \left( \sum_{p, q} x_{\beta p} G^{(\beta)}_{pq} x_{\beta q}+\tau \right) G_{i \rho} G_{\rho i} \right] = \E \left[ \frac{1}{(t_{\beta}^{-1} -\tau)^2} \left( m+\tau \right) G_{s \rho} G_{\rho s} \right]\nonumber \\
&\qquad - \E \left[ \frac{4}{(t_{\beta}^{-1} -\tau)^3} \frac{1}{N^2}\sum_{k, l} G_{ik} G_{kl} G_{l \rho} G_{\rho i} \right] + \E \left[ \frac{1}{(t_{\beta}^{-1} -\tau)^3} \sum_{k, l} G_{kl} G_{lk} G_{i \rho} G_{\rho i} \right]	+ \caO(\Psi^5)\,.
\end{align}

We next remove the Greek index $\rho$ on the right side of~\eqref{w11}. We note that
\begin{align*}
\left( m+\tau \right) G_{i \rho} G_{\rho i} = G_{\rho\rho}^2 \left( m^{(\rho)} +\tau \right) \sum_{k, l} G^{(\rho)}_{ik} x_{\rho k} x_{\rho l} G^{(\rho)}_{li} + \frac{1}{N} \sum_j G_{j \rho} G_{\rho j} G_{\rho\rho} \sum_{k, l} G^{(\rho)}_{ik} x_{\rho k} x_{\rho l} G^{(\rho)}_{li}\,.
\end{align*}
Thus, taking the partial expectation $\E_{\rho}$, we find 
\begin{align}\label{le 8.21}
&\E_{\rho} \left[ \left( m+\tau \right) G_{i \rho} G_{\rho i} \right] = \frac{1}{(t_{\rho}^{-1} -\tau)^2} \left( m^{(\rho)} +\tau \right) \frac{1}{N} \sum_k G^{(\rho)}_{ik} G^{(\rho)}_{ki}\nonumber\\ &\qquad - \frac{2}{(t_{\rho}^{-1} -\tau)^3} \left( m^{(\rho)} +\tau \right)^2 \frac{1}{N} \sum_k G^{(\rho)}_{ik} G^{(\rho)}_{ki} \nonumber\\
&\qquad - \frac{4}{(t_{\rho}^{-1} -\tau)^3} \left( m^{(\rho)} +\tau \right) \frac{1}{N^2} \sum_{k, l} G^{(\rho)}_{ik} G^{(\rho)}_{kl} G^{(\rho)}_{li} \nonumber\\ &\qquad- \frac{2}{(t_{\rho}^{-1} -\tau)^3}\frac{1}{N^3} \sum_{j, k, l} G^{(\rho)}_{ik} G^{(\rho)}_{kj} G^{(\rho)}_{jl} G^{(\rho)}_{li} \nonumber \\
&\qquad - \frac{1}{(t_{\rho}^{-1} -\tau)^3}\frac{1}{N^3} \sum_{j, k, l} G^{(\rho)}_{ik} G^{(\rho)}_{ki} G^{(\rho)}_{jl} G^{(\rho)}_{lj} + \caO(\Psi^5)\,. 
\end{align}
Expanding the right side of~\eqref{le 8.21} with respect to the upper index $\rho$, we obtain 
\begin{multline} \label{w1111}
\E \left[ \left( m+\tau \right) G_{i \rho} G_{\rho i} \right]= \frac{1}{(t_{\rho}^{-1} -\tau)^2} \E\,[ X_{32}]\\- \frac{1}{(t_{\rho}^{-1} -\tau)^3} \big( 2 \E\, [X_{42} ]+ 2 \E\,[ X_{43} ]+ 2 \E\,[ X_{44} ]\big) + \caO(\Psi^5)\,. 
\end{multline}
We thus have for the first term on the right side of~\eqref{w11+w12+w13} that
\begin{align} \label{w11'}
&\E \left[ -\frac{1}{N^2} \sum_{\beta} \frac{1}{(t_{\beta}^{-1} -\tau)^2} \left( \sum_{p, q} x_{\beta p} G^{(\beta)}_{pq} x_{\beta q}+\tau \right) \sum_{\rho} G_{i \rho} G_{\rho i} \right] \nonumber \\
&\qquad= -\tau^{-4} \E\,[ X_{32} ]+ \tau^{-2} A_3 \big( 2 \E\,[ X_{42} ]+ 2 \E\,[ X_{43}] + 6 \E\,[ X_{44} ]- \E\,[ X_{44}'] \big)+ \caO(\Psi^5)\,.
\end{align}

The fourth order terms in~\eqref{w11+w12+w13} can easily be handled: we have
\begin{multline} \label{w12}
\E \left[ \frac{1}{N^2} \sum_{\beta} \frac{1}{(t_{\beta}^{-1} -\tau)^3} \left( \sum_{p, q} x_{\beta p} G^{(\beta)}_{pq} x_{\beta q}+\tau \right)^2 \sum_{\rho} G_{i \rho} G_{\rho i} \right]\\ = \tau^{-2} A_3 \big( \E \,[X_{42}] + 2 \E\, [X_{44}'] \big) + \caO(\Psi^5)\,,
\end{multline}
respectively,
\begin{align} \label{w13}
\E \left[ \frac{L_+ - z}{N} \sum_{\rho} G_{i \rho} G_{\rho i} \right] = \tau^{-2} (L_+ - z) \E\,[ X_{22}] + \caO(\Psi^5)\,.
\end{align}

We thus obtain from~\eqref{w11+w12+w13},~\eqref{w11'},~\eqref{w12} and~\eqref{w13} that
\begin{align} \label{w1}
\E \left[ \frac{2 \tau^3}{N} \left( \tau^{-1} -z - \frac{\widetilde m}{d} \right) \sum_{\rho} G_{i \rho} G_{\rho i} \right] &= -2 \tau^{-1} \E\,[ X_{32}] + 2 \tau A_3 \big( 3 \E\,[ X_{42}] + 2 \E\,[ X_{43}] + 6 \E\,[ X_{44}]\nonumber\\ &\qquad\qquad + \E\,[ X_{44}'] \big)+ 2\tau (L_+ - z) \E\,[ X_{22}] + \caO(\Psi^5)\,.
\end{align}

The third term on the right side of~\eqref{expansion 2}, which is also $\caO(\Psi^3)$, can be expanded in a similar manner: We begin with
\begin{align}\label{le 9.25}
G_{i \rho} G_{\rho \sigma} G_{\sigma i} = G_{i \rho} G_{\rho \sigma} G^{(\rho)}_{\sigma i} + G_{i \rho} G_{\rho \sigma} \frac{G_{i \rho} G_{\rho \sigma}}{G_{\rho\rho}}\,.
\end{align}
The second term on the right side of~\eqref{le 9.25} can easily be controlled: we have
\begin{align*}
\frac{1}{N^2} \sum_{\rho, \sigma} \E \left[ G_{i \rho} G_{\rho \sigma} \frac{G_{i \rho} G_{\rho \sigma}}{G_{\rho\rho}} \right] = -\tau^{-2} A_3 \big( 2 \E [X_{44}] + \E [X_{44}' ]\big) + \caO(\Psi^5)\,.
\end{align*}
Taking the partial expectation $\E_{\rho}$, we have
\begin{align}
&\E_{\rho} \left[ G_{i \rho} G_{\rho \sigma} G^{(\rho)}_{\sigma i} \right] \nonumber \\
&\qquad= \frac{1}{(t_{\rho}^{-1} -\tau)^2} \frac{1}{N} \sum_k G^{(\rho)}_{ik} G^{(\rho)}_{k \sigma} G^{(\rho)}_{\sigma i} - \frac{2}{(t_{\rho}^{-1} -\tau)^3} \frac{1}{N} \left( m^{(\rho)} + \tau \right) \sum_k G^{(\rho)}_{ik} G^{(\rho)}_{k \sigma} G^{(\rho)}_{\sigma i}\nonumber \\
& \qquad \qquad- \frac{4}{(t_{\rho}^{-1} -\tau)^3} \frac{1}{N^2} \sum_{k, l} G^{(\rho)}_{ik} G^{(\rho)}_{kl} G^{(\rho)}_{l \sigma} G^{(\rho)}_{\sigma i} + \caO(\Psi^5)\,. 
\end{align}
Thus, expanding with respect to the upper index $\rho$, we obtain
\begin{align}
\E_{\rho} \left[ G_{i \rho} G_{\rho \sigma} G^{(\rho)}_{\sigma s} \right]&= \frac{1}{(t_{\rho}^{-1} -\tau)^2} \frac{1}{N} \sum_{k} G_{ik} G_{k \sigma} G_{\sigma i} - \frac{2}{(t_{\rho}^{-1} -\tau)^3} \frac{1}{N} \left( m + \tau \right) \sum_{k} G_{ik} G_{k \sigma} G_{\sigma i}\nonumber \\
& \qquad\qquad - \frac{1}{(t_{\rho}^{-1} -\tau)^3} \frac{1}{N^2} \sum_{k, l} G_{ik} G_{kl} G_{l \sigma} G_{\sigma i} + \caO(\Psi^5)\,. 
\end{align}
Repeating the same procedure with $\sigma$ instead of $\rho$, we eventually find
\begin{multline} \label{w2}
\E \left[ \frac{2 \tau^3}{N^2} \sum_{\rho, \sigma} G_{s \rho} G_{\rho \sigma} G_{\sigma s} \right] = 2 \tau^{-1} \E\,[ X_{33}] - 2 \tau A_3 \big( 4\E\,[ X_{43}] + 6\E\, [X_{44} ]+ 2\E\,[ X_{44}'] \big) + \caO(\Psi^5)\,.
\end{multline}

We conclude from~\eqref{expansion 1'},~\eqref{expansion 2},~\eqref{w5},~\eqref{w6},~\eqref{w3},~\eqref{w4},~\eqref{w1} and~\eqref{w2} that
\begin{align*}
\frac{1}{N} \sum_{\alpha} \E\,[ G_{i \alpha} G_{\alpha i}] &= \frac{1}{N^2} \sum_s^{(i)} \sum_{\rho} \E\,[ G_{i \rho} G_{\rho i}] + \frac{\tau^{-2}}{N} \E\,[ G_{ii}^2] + 2 \tau^{-1} (L_+ - z) \E\,[ X_{22} ]\nonumber\\ &\qquad- 2 (A_3 + \tau^{-3}) \E\, [X_{32} + X_{33}]\nonumber \\
& \qquad+ 3(A_4 + \tau^{-4} + 2 \tau^{-1} A_3) \E\,[ X_{42} + 2 X_{43} + 4 X_{44} + X_{44}'] + \caO(\Psi^5)\,.
\end{align*}
Since
$$
\frac{1}{N^2} \sum_s^{(i)} \sum_{\rho} \E\,[ G_{i \rho} G_{\rho i}] = \frac{1}{N} \sum_{\rho} \E\,[ G_{i \rho} G_{\rho i}] + \caO(\Psi^5)\,,
$$
we obtain the relation
\begin{align} \label{X_22 ward'}
 2 (A_3 + \tau^{-3}) \E\, [X_{32} + X_{33}] &= 3(A_4 + \tau^{-4} + 2 \tau^{-1} A_3) \E\,[ X_{42} + 2 X_{43} + 4 X_{44} + X_{44}']\nonumber\\ &\qquad + \frac{\tau^{-2}}{N} \E\,[ G_{ii}^2] + 2 \tau^{-1} (L_+ - z) \E \,[X_{22}] + \caO(\Psi^5)\,. 
\end{align}

Recalling that
$$
G_{ii}^2 = \tau^2 + 2 \tau^3 \left( \tau^{-1} -z -\sum_{\gamma, \delta} x_{\gamma i} G^{(i)}_{\gamma\delta} x_{\delta i} \right) + \caO(\Psi^2)\,,
$$
we find 
\begin{align}\label{le Gii2}
\E \,[G_{ii}^2] = \tau^2 + 2\tau^3 \E \left[ \tau^{-1} -z - \frac{\wt m}{d} \right] + \caO(\Psi^2) = \tau^2 - 2\tau \E\,[m+\tau] + \caO(\Psi^2)\,.
\end{align}
Thus, plugging~\eqref{le Gii2} into~\eqref{X_22 ward'} and recalling from~\eqref{tau 2} that $A_3 + \tau^{-3}=1$, we find
\begin{align} \label{X_22 ward}
 2  \E\, [X_{32} + X_{33}] -\frac{  1}{N}
& = 3 (A_4 + \tau^{-4} + 2 \tau^{-1} A_3) \E\,[ X_{42} + 2 X_{43} + 4 X_{44} + X_{44}'] \nonumber\\ &\quad- \frac{2 \tau^{-1}}{N} \E\,[m+\tau] + 2 \tau^{-1} (L_+ - z) \E\,[ X_{22}] + \caO(\Psi^5)\,.
\end{align}	
The identity~\eqref{X_22 ward} is the optical theorem derived from $X_{22}$. We remark that the second and third term on the right  side of~\eqref{X_22 ward} are both $\caO(\Psi^4)$. In Subsection~\ref{le subsection OT mX22}, we show that they can be written as linear combinations of $X_{42}$, $X_{43}$, $X_{44}$ and $X_{44}'$.

\subsection{Optical theorems from $X_{32}$ and $X_{33}$}\label{le subsection OT X32 X33}
  
In a next step, we derive further optical theorems using the ideas presented in Subsection~\ref{le subsection OT X22}. We start by considering
\begin{align}\label{le X32 in OT}
X_{32} = (m+\tau) \frac{1}{N} \sum_{s} G_{is} G_{si} = (m+\tau) \frac{1}{N} \sum_{s}^{(i)} G_{is} G_{si} + (m+\tau) \frac{1}{N} G_{ii}^2 \,.
\end{align}
To estimate the first term on the very right side of~\eqref{le X32 in OT}, we consider, for $s \neq i$,
\begin{align} \label{32w}
(m+\tau) G_{is} G_{si} = (m^{(s)}+\tau) G_{is} G_{si} + \frac{1}{N} \sum_{j}^{(s)} \frac{G_{js} G_{sj}}{G_{ss}} G_{is} G_{si} + \caO(\Psi^5)\,.
\end{align}
We expand the first term on the right side of~\eqref{32w} with respect to the lower index $s$ to get
\begin{align}
(m^{(s)}+\tau) G_{is} G_{si} &= (m^{(s)}+\tau) G_{ss}^2 \sum_{\rho, \sigma} G^{(s)}_{i \rho} x_{\rho s} x_{\sigma s} G^{(s)}_{\sigma i} \nonumber \\
&= \tau^2 (m^{(s)}+\tau) \sum_{\rho, \sigma} G^{(s)}_{i \rho} x_{\rho s} x_{\sigma s} G^{(s)}_{\sigma i}\nonumber \\
&\qquad + 2 \tau^3 (m^{(s)}+\tau) \left( \tau^{-1} -z -\sum_{\gamma, \delta} x_{\gamma s} G^{(s)}_{\gamma\delta} x_{\delta s} \right) \sum_{\rho, \sigma} G^{(s)}_{i \rho} x_{\rho s} x_{\sigma s} G^{(s)}_{\sigma i} + \caO(\Psi^5)\,. \nonumber
\end{align}
Taking the partial expectation $\E_s$, we obtain
\begin{align*}
\E_s \left[ (m^{(s)}+\tau) G_{is} G_{si} \right] &= \frac{\tau^2}{N} (m^{(s)}+\tau) \sum_{\rho} G^{(s)}_{i \rho} G^{(s)}_{\rho i} + \frac{2 \tau^3}{N} (m^{(s)}+\tau) \left( \tau^{-1} -z - \frac{\wt m^{(s)}}{d} \right) \sum_{\rho} G^{(s)}_{i \rho} G^{(s)}_{\rho i} \nonumber\\
& \qquad\qquad- \frac{4 \tau^3}{N^2} (m^{(s)}+\tau) \sum_{\rho, \sigma} G^{(s)}_{i \rho} G^{(s)}_{\rho \sigma} G^{(s)}_{\sigma i} + \caO(\Psi^5)\,.
\end{align*}
Since
\begin{multline*}
\frac{\tau^2}{N} (m^{(s)}+\tau) \sum_{\rho} G^{(s)}_{i \rho} G^{(s)}_{\rho i} 
= \frac{\tau}{N} (m+\tau) \sum_{\rho} G_{i \rho} G_{\rho i}\\ + \frac{2 \tau}{N} (m+\tau) \sum_{\rho} G_{is} G_{s \rho} G_{\rho i} + \frac{\tau}{N^2} \sum_{j} \sum_{\rho} G_{js} G_{sj} G_{i \rho} G_{\rho i}+ \caO(\Psi^5) 
\end{multline*}
and since
$$
\E \left[ \frac{\tau^3}{{N^2}} (m+\tau) \sum_{\rho, \sigma} G_{i \rho} G_{\rho \sigma} G_{\sigma i} \right] = \E \left[ \frac{\tau}{{ N^2}} (m+\tau) \sum_{k} \sum_{\sigma} G_{ik} G_{k \sigma} G_{\sigma i} \right] + \caO(\Psi^5)\,,
$$
we obtain
\begin{multline*}
\E \left[ (m^{(s)}+\tau) \frac{1}{N} \sum_s^{(i)} G_{is} G_{si} \right] = \E \left[ \frac{\tau^2}{N} (m+\tau) \sum_{\rho} G_{i \rho} G_{\rho i} \right] - \tau^{-1} \E\,[ 2 X_{42} +2 X_{43} - X_{44}'] + \caO(\Psi^5)\,.
\end{multline*}
Moreover, we have that
$$
\E \left[ \frac{1}{N} \sum_{j}^{(s)} \frac{G_{js} G_{sj}}{G_{ss}} G_{is} G_{si} \right] = -\tau^{-1} \E\,[ 2 X_{44} + X_{44}'] + \caO(\Psi^5)\,.
$$
We thus find the relation
\begin{multline*}
\E\,[ X_{32}] - N^{-1}\E \left[ (m+\tau)  G_{ii}^2 \right] = \tau^2 \E \left[ (m+\tau) \frac{1}{N} \sum_{\rho} G_{i \rho} G_{\rho i} \right] - 2\tau^{-1} \E\,[ X_{42} + X_{43} + X_{44}] + \caO(\Psi^5)\,.
\end{multline*}
Applying~\eqref{w1111}, we obtain
\begin{multline}\label{le 9.39}
\E \,[X_{32}] -N^{-1} \E \left[ (m+\tau)  G_{ii}^2 \right] = \E\,[ X_{32}] - 2 (\tau^2 A_3 + \tau^{-1}) \E\, [X_{42} + X_{43} + X_{44}] + \caO(\Psi^5)\,.
\end{multline}
Further, since
$$
N^{-1}\E \left[ (m+\tau) G_{ii}^2 \right] = \tau^2 N^{-1} \E \,[ m+\tau ] + \caO(\Psi^5)\,,
$$
we obtain from~\eqref{le 9.39} the identity
\begin{align} \label{X_32 ward}
N^{-1} \E\, [ m+\tau ] = 2 (A_3 + \tau^{-3}) \E\, [X_{42} + X_{43} + X_{44}] + \caO(\Psi^5)\,,
\end{align}
which is the optical theorem derived from $X_{32}$.

We next derive the optical theorem obtained from 
\begin{align}\label{le X33 OT}
X_{33} = \frac{1}{N^2} \sum_{k, s} G_{ik} G_{ks} G_{si}\,.
\end{align}
Since the contributions to the sums in~\eqref{le X33 OT} from the cases $i=k$ or $s=k$ are negligible (of $\caO(\Psi^2)$), we assume that $i, s \neq k$. Expanding the summand in~\eqref{le X33 OT} with respect to the lower index $k$, we get
\begin{align}\label{le 9.41}
G_{ik} G_{ks} G_{si} = G_{kk}^2 \sum_{\rho, \sigma} G^{(k)}_{i \rho} x_{\rho k} x_{\sigma k} G^{(k)}_{\sigma s} G^{(k)}_{si} + G_{ik} G_{ks} \frac{G_{sk} G_{ki}}{G_{kk}}\,.
\end{align}
Taking the partial expectation~$\E_k$, we find for the first term on the right side of~\eqref{le 9.41} that
\begin{align*}
&\E \left[ G_{kk}^2 \sum_{\rho, \sigma} G^{(k)}_{i \rho} x_{\rho k} x_{\sigma k} G^{(k)}_{\sigma s} G^{(k)}_{si} \right] \nonumber \\
&\quad= \E \left[ \frac{\tau^2}{N} \sum_{\rho} G^{(k)}_{i \rho} G^{(k)}_{\rho s} G^{(k)}_{si} \right] + \E \left[ \frac{2 \tau^3}{N} \left( \tau^{-1} -z - \frac{\wt m^{(k)}}{d} \right) \sum_{\rho} G^{(k)}_{i \rho} G^{(k)}_{\rho s} G^{(k)}_{si} \right]\nonumber \\
&\qquad - \E \left[ \frac{4 \tau^3}{N^2} \sum_{\rho, \sigma} G^{(k)}_{i \rho} G^{(k)}_{\rho \sigma} G^{(k)}_{\sigma s} G^{(k)}_{si} \right] + \caO(\Psi^2)\,. 
\end{align*}
Expanding further with respect to the upper index $k$, we thus find from~\eqref{le 9.41} that
\begin{align} \label{X_33 with gamma 1}
\E\,[ X_{33} ]= \E \left[ \frac{\tau^2}{N^2} \sum_{s} \sum_{\rho} G_{i \rho} G_{\rho s} G_{si} \right] - \tau^{-1} \E\, [2 X_{43} + 3 X_{44} + X_{44}'] + \caO(\Psi^5)\,.
\end{align}
Expanding the summand in the first term on the right of~\eqref{X_33 with gamma 1} with respect the index $\rho$, we get
\begin{align} \label{X_33 with gamma 2}
\E\,[ X_{33}] = \E\, [X_{33} ]- (\tau^2 A_3 + \tau^{-1}) \E\, [2 X_{43} + 3 X_{44} + X_{44}'] + \caO(\Psi^5)\,,
\end{align}
that is, recalling $A_3+\tau^{-3}=1$ (see~\eqref{le relation A2 and A3}),
\begin{align} \label{X_33 ward}
\tau^{-2}\E\, [2 X_{43} + 3 X_{44} + X_{44}'] = \caO(\Psi^5)\,,
\end{align}
which is the optical theorem derived from~$X_{33}$. 

\subsection{Optical theorem from $mX_{22}$}\label{le subsection OT mX22}
We return to the concluding remarks of Subsection~\ref{le subsection OT X22}. In the present subsection, we show that the terms $(L_+ - z) \E \,[X_{22}]$ and $N^{-1} \E\,[m+\tau]$, both appearing in~\eqref{X_22 ward}, can be decomposed into linear combinations of $X_{42}$, $X_{43}$, $X_{44}$ and $X_{44}'$. The latter term, $N^{-1} \E\,[m+\tau]$, can be handled by~\eqref{X_32 ward}, while the former needs to be dealt with the optical theorem obtained from~$mX_{22}$. Recall that
\begin{align}\label{le mX22}
m X_{22} &= \frac{1}{N^2} \sum_{a, s} G_{aa} G_{is} G_{si} \,.
\end{align}
Expanding the summand on the right side of~\eqref{le mX22} in the index $a$, we get
\begin{align}\label{le 9.46}
mX_{22}&= \frac{1}{N^2} \sum_{a \neq s} \left( G_{aa} G^{(a)}_{is} G^{(a)}_{si} + G^{(a)}_{is} G_{sa} G_{ai} + G_{ia} G_{as} G_{si} \right) + \caO(\Psi^5) \nonumber\\
&= \frac{1}{N^2} \sum_{a \neq s} \left( G_{aa} G^{(a)}_{is} G^{(a)}_{si} - \frac{G_{ia} G_{as}}{G_{aa}} G_{sa} G_{ai} \right) + 2X_{33} + \caO(\Psi^5)\,.
\end{align}
Expanding the second summand of the first term on the right side of~\eqref{le 9.46} with respect the lower index~$a$, we get
\begin{multline} \label{mX}
\E\, [m X_{22}] = \E \left[ \frac{1}{N^2} \sum_{a \neq s} G_{aa} G^{(a)}_{is} G^{(a)}_{si} \right] + \tau^{-1} \E\,[ 2X_{44} + X_{44}'] + 2\E\,[ X_{33}] + \caO(\Psi^5)\,.
\end{multline}
We expand the summand of the first term on the right of~\eqref{mX} further in the lower index $a$ to find
\begin{align} \label{m1+m2+m3+m4}
\E_a [G_{aa} G^{(a)}_{is} G^{(a)}_{si}] &= -\tau G^{(a)}_{is} G^{(a)}_{si} - \tau^2 \left( \tau^{-1} - z - \frac{\wt m^{(a)}}{d} \right) G^{(a)}_{is} G^{(a)}_{si} - \tau^3 \left( \tau^{-1} - z - \frac{\wt m^{(a)}}{d} \right)^2 G^{(a)}_{is} G^{(s)}_{si}\nonumber\\
&\qquad\qquad- \frac{2 \tau^3}{N^2} \sum_{\gamma, \delta} G^{(a)}_{\gamma\delta} G^{(a)}_{\delta\gamma} G^{(a)}_{is} G^{(a)}_{si} + \caO(\Psi^5)\,.
\end{align}
Expanding the first term on the right side of~\eqref{m1+m2+m3+m4} with respect the upper index $a$, we get
\begin{align} \label{m11+m12+m13+m14}
G^{(a)}_{is} G^{(a)}_{si} = G_{is} G_{si} - G^{(a)}_{is} \frac{G_{sa} G_{ai}}{G_{aa}} - \frac{G_{ia} G_{as}}{G_{aa}} G^{(a)}_{si} - \frac{G_{ia} G_{as}}{G_{aa}} \frac{G_{sa} G_{ai}}{G_{aa}}.
\end{align}
We stop expanding the first term on the right side of~\eqref{m11+m12+m13+m14} which will eventually, after averaging over~$s$,  become~$X_{22}$. For the second term on the right side of~\eqref{m11+m12+m13+m14}, we have
\begin{align*}
\E_a \left[ G^{(a)}_{is} \frac{G_{sa} G_{ai}}{G_{aa}} \right] &= -\frac{\tau}{N} \sum_{\gamma} G^{(a)}_{is} G^{(a)}_{s \gamma} G^{(a)}_{\gamma i} - \frac{\tau^2}{N} \left( \tau^{-1} - z - \frac{\wt m^{(a)}}{d} \right) \sum_{\gamma} G^{(a)}_{is} G^{(a)}_{s \gamma} G^{(a)}_{\gamma i}\nonumber\\ &\qquad\qquad+ \frac{2\tau^2}{N^2} \sum_{\gamma, \delta} G^{(a)}_{is} G^{(a)}_{s \gamma} G^{(a)}_{\gamma \delta} G^{(a)}_{\delta i} + \caO(\Psi^5)\,.
\end{align*}
Thus,
\begin{align}\label{le 9.51}
\E \left[ \frac{\tau}{N^2} \sum_{a \neq s} G^{(a)}_{is} \frac{G_{sa} G_{ai}}{G_{aa}} \right] &= -\E \left[ \frac{\tau^2}{N^3} \sum_{a \neq s} \sum_{\gamma} G^{(a)}_{is} G^{(a)}_{s \gamma} G^{(a)}_{\gamma i} \right] + \tau^{-1} \E\, [ X_{43} + 2 X_{44}] + \caO(\Psi^5) \nonumber \\
&= -\E \left[ \frac{\tau^2}{N^2} \sum_{s} \sum_{\gamma} G_{is} G_{s \gamma} G_{\gamma i} \right] + \tau^{-1} \E\, [ X_{43} - X_{44}] + \caO(\Psi^5)\,.
\end{align}
Following the calculation in~\eqref{X_33 with gamma 1}--\eqref{X_33 with gamma 2}, we obtain from~\eqref{le 9.51} that
\begin{align}
\E \left[ \frac{\tau}{N^2} \sum_{a \neq s} G^{(a)}_{is} \frac{G_{sa} G_{ai}}{G_{aa}} \right] &= -\E\,[ X_{33} ]+ \tau^2 A_3 \E \,[ 2X_{43} + 3X_{44} + X_{44}']\nonumber\\
&\qquad\qquad + \tau^{-1} \E \,[ X_{43} - X_{44}] + \caO(\Psi^5)\,.
\end{align}
The third term on the right side of~\eqref{m11+m12+m13+m14} can be expanded in a similar manner. In sum, we get
\begin{align} \label{m1}
-\E \left[ \frac{\tau}{N^2} \sum_{a \neq s} G^{(a)}_{is} G^{(a)}_{si} \right] &= -\tau \E\,[ X_{22}] -2 \E\,[ X_{33}] + 2 \tau^2 A_3 \E \,[ 2X_{43} + 3X_{44} + X_{44}']\nonumber \\
&\qquad\qquad + \tau^{-1} \E\, [ 2 X_{43} + X_{44}'] + \caO(\Psi^5)\,.
\end{align}

We next consider the second term on the right side of~\eqref{m1+m2+m3+m4}. We note that
\begin{align} \label{m21+m22+m23+m24}
&\left( \tau^{-1} - z - \frac{\wt m^{(a)}}{d} \right) G^{(a)}_{is} G^{(a)}_{si}= \left( \tau^{-1} - z - \frac{\wt m}{d} \right) G_{is} G_{si} \nonumber\\ &\qquad\qquad+ \tau^{-1} \left( \tau^{-1} - z - \frac{\wt m}{d} \right) G_{ia} G_{as} G_{si}\nonumber \\
&\qquad \qquad+ \tau^{-1} \left( \tau^{-1} - z - \frac{\wt m}{d} \right) G_{is} G_{sa} G_{ai} - \frac{\tau^{-1}}{N} \sum_{\gamma} G_{\gamma a} G_{a \gamma} G_{is} G_{si} + \caO(\Psi^5)\,. 
\end{align}
We expand the first term on the right side of~\eqref{m21+m22+m23+m24} similar to~\eqref{w11+w12+w13} to get
\begin{align} \label{w111+w112+113}
&\left( \tau^{-1} -z - \frac{\widetilde m}{d} \right) G_{is} G_{si} = -\frac{1}{N} \sum_{\beta} \frac{1}{(t_{\beta}^{-1} -\tau)^2} \left( \sum_{p, q} x_{\beta p} G^{(\beta)}_{pq} x_{\beta q}+\tau \right) G_{is} G_{si} \nonumber\\
&\quad+ \frac{1}{N} \sum_{\beta} \frac{1}{(t_{\beta}^{-1} -\tau)^3} \left( \sum_{p, q} x_{\beta p} G^{(\beta)}_{pq} x_{\beta q}+\tau \right)^2 G_{is} G_{si} + (L_+ - z) G_{is} G_{si} + \caO(\Psi^5)\,.
\end{align}
Taking the partial expectation $\E_{\beta}$ and proceeding as in~\eqref{w111+w112+w113}--\eqref{w113} we find for the first term on the right side of~\eqref{w111+w112+113} that
\begin{align*}
\E \left[ \frac{\tau^2}{N^3} \sum_{i, s} \sum_{\beta} \frac{1}{(t_{\beta}^{-1} -\tau)^2} \left( \sum_{p, q} x_{\beta p} G^{(\beta)}_{pq} x_{\beta q}+\tau \right) G_{is} G_{si} \right] = \E\,[ X_{32}]+ \tau^2 A_3 \E\, [X_{44}' -4 X_{44}] + \caO(\Psi^5)\,.
\end{align*}
We thus have
\begin{multline} \label{m2}
\E \left[ -\frac{\tau^2}{N^2} \sum_{a \neq s} \left( \tau^{-1} - z - \frac{\wt m^{(a)}}{d} \right) G^{(a)}_{is} G^{(a)}_{si} \right]=
 \E\,[ X_{32}] \\- \tau^2 A_3 \E\, [X_{42} +4 X_{44} + X_{44}']  - \tau^2 (L_+ -z) \E\,[ X_{22}] + \tau^{-1} \E \,[2X_{43} + X_{44}'] + \caO(\Psi^5)\,. 
\end{multline}

From~\eqref{m1+m2+m3+m4},~\eqref{m1} and~\eqref{m2} we find for the first term on the right side of~\eqref{mX} that
\begin{align}\label{le 9.55}
&\E \left[ \frac{1}{N^2} \sum_{a \neq s} G_{aa} G^{(a)}_{is} G^{(a)}_{si} \right] \nonumber \\
&\quad= -\tau \E \,[X_{22}] -2 \E\,[ X_{33}] + 2 \tau^2 A_3 \E\, [ 2X_{43} + 3X_{44} + X_{44}'] + \tau^{-1} \E \,[ 2 X_{43} + X_{44}']\nonumber \\
& \qquad+ \E\,[ X_{32} ]- \tau^2 A_3 \E\, [X_{42} +4 X_{44} + X_{44}'] - \tau^2 (L_+ -z) \E\,[ X_{22}] + \tau^{-1} \E \,[2X_{43} + X_{44}'] \nonumber \\
&\qquad -\tau^{-1} \E \,[X_{42} + 2 X_{44}'] + \tau^{-1} \E\,[ 2X_{44} + X_{44}'] + 2\E\,[ X_{33} ]+ \caO(\Psi^5)\,. 
\end{align}
Plugging~\eqref{le 9.55} into~\eqref{mX}, we finally find
\begin{multline*}
\E\,[m X_{22}] + \tau^2 (L_+ -z) \E\,[ X_{22} ]= -\tau \E \,[X_{22} ]+ \E\,[ X_{32}] \\ + (\tau^2 A_3 + \tau^{-1}) \E\, [ -X_{42} + 4X_{43} + 2X_{44} + X_{44}']  + \caO(\Psi^5)\,. 
\end{multline*}
Since $X_{32} = (m+\tau) X_{22}$ by definition, we obtain
\begin{align} \label{mX_22 ward}
(L_+ -z) \E\,[ X_{22}] = (A_3 + \tau^{-3}) \E\, [ -X_{42} + 4X_{43} + 2X_{44} + X_{44}']  + \caO(\Psi^5)\,,
\end{align}
which is the optical theorem obtained from $mX_{22}$.

\subsection{Proof of Lemma~\ref{le lemma optical theorem}}\label{last subsection optical theorems}
In this subsection, we prove Lemma~\ref{le lemma optical theorem} based on the optical theorems derived in the Subsections~\ref{le subsection OT X22},~\ref{le subsection OT X32 X33} and~\ref{le subsection OT mX22}.
\begin{proof}[Proof of Lemma~\ref{le lemma optical theorem}]
 For simplicity set
\begin{align}\label{le definition X3 and X4}
 X_3\deq 2(X_{32}+X_{33})\,,\qquad X_4\deq 3(X_{42}+2X_{43}+4X_{44}+X_{44}')\,.
\end{align}

From~\eqref{X_22 ward},~\eqref{X_32 ward} and~\eqref{mX_22 ward} we have
\begin{multline*}
\E \,[X_3] - N^{-1}= (A_4 + \tau^{-4} + 2 \tau^{-1} A_3) \E\,[X_4] - 2\tau^{-1} N^{-1} \E\,[m+\tau] \\+ 2 \tau^{-1} (L_+ - z)  \E\,[ X_{22}] + \caO(\Psi^5) \,,
\end{multline*}
hence,
\begin{align}\label{le to be subtracted eight}
\E \,[X_3] - N^{-1}= (A_4 + \tau^{-4} + 2 \tau^{-1} A_3) \E\,[X_4]  - \tau^{-1} \E\,[ 6 X_{42} - 4 X_{43} - 2 X_{44}'] + \caO(\Psi^5)\,.
\end{align}
Subtracting $8$-times~\eqref{X_33 ward} from~\eqref{le to be subtracted eight}, we obtain
\begin{align*}
\E \,[X_3]- N^{-1} &= (A_4 + \tau^{-4} + 2 \tau^{-1} A_3) \E\,[X_4]- \tau^{-1}  6\E\,[  X_{42} + 2 X_{43} + 4 X_{44} +  X_{44}'] + \caO(\Psi^5)\nonumber \\
&= (A_4 + \tau^{-4} + 2 \tau^{-1} A_3 -2 \tau^{-1}) \E\,[X_4] + \caO(\Psi^5)\,.
\end{align*}
Using $A_3 + \tau^{-3} = 1$ (see~\eqref{le relation A2 and A3}), we conclude that
\begin{align} \label{ward final}
\E\, [X_3]- N^{-1} = (A_4 - \tau^{-4}) \E\,[X_4] + \caO(\Psi^5)\,.
\end{align}
This proves~\eqref{le eq lemma optical theorem} and concludes the proof of Lemma~\ref{le lemma optical theorem}.

\end{proof}

\section{Proof of Lemma 6.1}\label{sec:proof of getting rid of the rest}

In this last section, we prove Lemma~\ref{getting rid of the rest}.

\begin{proof}[Proof of Lemma~\ref{getting rid of the rest}]

In a first step of the proof of~\eqref{le eq getting rid of the rest}, we express $(\partial_t t_{\alpha})/t_\alpha^2$ in terms of  $\gamma$ and $\dot\gamma$.

From the time evolution of $\Sigma=\diag(\sigma_\alpha)$ in~\eqref{le time evolution of Sigma}, we have
$$
\partial_t \frac{1}{\sigma_{\alpha}(t)} = -\e{-t} \frac{1}{\sigma_{\alpha}(0)} + \e{-t} = 1 - \frac{1}{\sigma_{\alpha}(t)}\,,\qquad\qquad (t\ge 0)\,.
$$
Since $t_{\alpha} = \gamma \sigma_{\alpha}$ by the definition of $T$, we get
$$
\frac{\partial_t t_{\alpha}}{t_{\alpha}^2} = - \partial_t\frac{1}{ t_{\alpha}(t)}=\left( \frac{\dot\gamma}{\gamma} + 1 \right)\frac{1}{ t_{\alpha}(t)} - \frac{1}{\gamma}\,,\qquad\qquad(t\ge0)\,.
$$
Recalling the definitions of $(A_k)$ in~\eqref{le definition of Ak} and that $A_2=\tau^{-2}$, we then obtain, dropping for simplicity the $t$-dependence from the notation,
\begin{align}\label{le relation among coefs 1}
\frac{1}{N}\sum_{\alpha} \frac{\partial_t t_{\alpha}}{t_{\alpha}^2} \frac{1}{(t_{\alpha}^{-1} -\tau)^3} = \left( \frac{\dot\gamma}{\gamma} + 1 \right)\tau^{-2} + \left( \frac{\dot\gamma}{\gamma} + 1 \right) \tau A_3- \frac{1}{\gamma} A_3\,,
\end{align}
respectively,
\begin{align}\label{le relation among coefs 2}
\frac{1}{N}\sum_{\alpha} \frac{\partial_t t_{\alpha}}{t_{\alpha}^3} \frac{1}{(t_{\alpha}^{-1} -\tau)^4} = \left( \frac{\dot\gamma}{\gamma} + 1 \right) A_3 + \left( \frac{\dot\gamma}{\gamma} + 1 \right) \tau A_4 - \frac{1}{\gamma}A_4\,.
\end{align}
Using the short-hand notation
\begin{align}
 X_3 =2(X_{32} + X_{33})\,,\qquad X_4 =3 (X_{42} + 2X_{43} +4X_{44} + X_{44}')\,,
\end{align}
 (see~\eqref{le definition X3 and X4}), we observe that~\eqref{le eq getting rid of the rest} is proven, once we have established that
\begin{multline}\label{claim}
\left[ \left( {\dot\gamma} + \gamma \right) \tau^{-2} + \left( {\dot\gamma \tau} + \gamma\tau - 1 \right) A_3 \right] \im \E\,[X_3]\\ =\left[ \left({\dot\gamma}+\gamma \right) A_3 + \left( {\dot\gamma \tau} + \gamma\tau - 1\right) A_4 \right]\,\im\E\,[X_4]+\caO(\Psi^5)\,.
\end{multline}
Combining the following lemma with Lemma~\ref{le lemma optical theorem}, it is straightforward to assure the validity~\eqref{claim}.
\begin{lemma}\label{le lemma coeffs}
Let $\gamma$ and $\tau$ be defined in~\eqref{le gamma} and~\eqref{le tau}. Then we have
\begin{align}
 &\left( {\dot\gamma} +\gamma \right) \tau^{-2} + \left( {\dot\gamma \tau}+\gamma \tau -1 \right) A_3 =\gamma(\tau^{-2} A_4 - A_3^2)\,,\label{le lemma coeffs 1}\\
& \left({\dot\gamma}+ \gamma \right) A_3 + \left( {\dot\gamma \tau} + \gamma\tau -1 \right) A_4=\gamma (\tau^{-2} A_4 - A_3^2)(A_4 - \tau^{-4}) \label{le lemma coeffs 2} \,.
\end{align}

\end{lemma}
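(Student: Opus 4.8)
The plan is to prove both identities by differentiating in $t$ the two transcendental equations \eqref{le xi} and \eqref{le gamma} that define $\xi_+(t)$ and $\gamma(t)$, and then collapsing the resulting expressions with the sum rules $A_2=\tau^{-2}$, $A_3+\tau^{-3}=1$ of \eqref{tau 2}. The computation closes up because of one structural fact: since $t_\alpha^{-1}=\gamma^{-1}\sigma_\alpha^{-1}$ and $\partial_t\sigma_\alpha^{-1}=1-\sigma_\alpha^{-1}$, one has $\partial_t(t_\alpha^{-1})=\gamma^{-1}-(1+\dot\gamma/\gamma)\,t_\alpha^{-1}$, so, writing $\lambda=1+\dot\gamma/\gamma$ and $\mu=\gamma^{-1}-\dot\tau-\tau\lambda$,
\[
\partial_t\big(t_\alpha^{-1}-\tau\big)=-\lambda\big(t_\alpha^{-1}-\tau\big)+\mu ,
\]
which is affine in $t_\alpha^{-1}-\tau$. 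Consequently, differentiating $A_k=\frac1N\sum_\alpha(t_\alpha^{-1}-\tau)^{-k}$ produces only members of the same family,
\[
\dot A_k=k\lambda A_k-k\mu A_{k+1}\qquad(k\ge2).
\]

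The second step uses that the definitions of $\xi_+(t)$ and $\gamma(t)$ are precisely the statements that $A_2(t)=\tau(t)^{-2}$ and $A_3(t)+\tau(t)^{-3}=1$ hold identically in $t$. Differentiating these, inserting $\dot A_2=2\lambda A_2-2\mu A_3$ and $\dot A_3=3\lambda A_3-3\mu A_4$, expanding $\mu$, and simplifying once more by the sum rules, leaves a $2\times2$ linear system for $\dot\tau$ and $\lambda$ (equivalently $\dot\gamma$):
\begin{align*}
\gamma^{-1}A_3 &= \dot\tau+\lambda\big(\tau^{-2}+\tau A_3\big),\\
\gamma^{-1}A_4 &= \dot\tau\big(A_4-\tau^{-4}\big)+\lambda\big(A_3+\tau A_4\big).
\end{align*}
Multiplying each line by $\gamma$ and regrouping (using $\gamma\lambda=\gamma+\dot\gamma$), the left-hand sides of \eqref{le lemma coeffs 1} and \eqref{le lemma coeffs 2} appear verbatim, and one reads off
\[
\big(\dot\gamma+\gamma\big)\tau^{-2}+\big(\dot\gamma\tau+\gamma\tau-1\big)A_3=-\gamma\dot\tau,\qquad
\big(\dot\gamma+\gamma\big)A_3+\big(\dot\gamma\tau+\gamma\tau-1\big)A_4=-\gamma\dot\tau\big(A_4-\tau^{-4}\big).
\]
In particular the two coefficients differ exactly by the factor $A_4-\tau^{-4}$, which is the only feature needed when the lemma is combined with Lemma~\ref{le lemma optical theorem} to deduce \eqref{claim}.

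It then remains to evaluate $-\gamma\dot\tau$. Eliminating $\lambda$ between the two equations of the system and reducing with $\tau^{-2}=A_2$, $\tau^{-3}=1-A_3$ causes a drastic collapse — in particular the bracket $(A_4-\tau^{-4})(\tau^{-2}+\tau A_3)-(A_3+\tau A_4)$ reduces to $-1$ — giving $\dot\tau=\gamma^{-1}\big(A_3^2-\tau^{-2}A_4\big)$, hence $-\gamma\dot\tau=\tau^{-2}A_4-A_3^2$; substituting this into the two relations above yields \eqref{le lemma coeffs 1} and \eqref{le lemma coeffs 2}. Equivalently, one may instead extract $\dot\xi_+=A_3-\xi_+$ and $\dot\gamma=-\gamma+\tau^{-3}A_4+\tau^{-4}A_3$ directly from differentiating \eqref{le xi} and \eqref{le gamma} — each again closing up thanks to the affine structure — and then differentiate $\tau=\xi_+/\gamma$.

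The genuinely laborious part is organizational rather than conceptual: carrying out the implicit differentiation of the two transcendental equations without slips, and recognizing at each stage which of the two sum rules to substitute so that the cancellations occur — the emergence of $-\gamma\dot\tau$ and the reduction of the bracket to $-1$ both hinge on invoking $A_3+\tau^{-3}=1$ and $A_2=\tau^{-2}$ at exactly the right moments. No probabilistic input or error term $\caO(\Psi^{\,\cdot})$ enters here; \eqref{le lemma coeffs 1}–\eqref{le lemma coeffs 2} are exact identities between deterministic quantities.
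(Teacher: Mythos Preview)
Your approach is essentially the paper's own: both differentiate the two sum rules $A_2=\tau^{-2}$ and $A_3+\tau^{-3}=1$ in $t$, combine the resulting relations to extract $\dot\gamma+\gamma$ and $\dot\tau$, and substitute back; your packaging via the affine identity $\partial_t(t_\alpha^{-1}-\tau)=-\lambda(t_\alpha^{-1}-\tau)+\mu$ and the recursion $\dot A_k=k\lambda A_k-k\mu A_{k+1}$ is a bit more systematic, but the substance is identical. One small point: your computation (correctly) gives $-\gamma\dot\tau=\tau^{-2}A_4-A_3^2$, so the left side of \eqref{le lemma coeffs 1} equals $\tau^{-2}A_4-A_3^2$ rather than $\gamma(\tau^{-2}A_4-A_3^2)$ as written in the statement --- but this is exactly what the paper's own calculation \eqref{claim 1 left}--\eqref{claim 1 right} actually proves too, so the factor $\gamma$ in the statement is a harmless typo (only the ratio $A_4-\tau^{-4}$ between the two identities is used downstream).
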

Assuming the correctness of Lemma~\ref{le lemma coeffs}, we can recast~\eqref{claim} as
\begin{align}\label{le to be verified}
\gamma(\tau^{-2} A_4 - A_3^2)\im \E\,[X_3]&=\gamma(\tau^{-2} A_4 - A_3^2)(A_4 - \tau^{-4})\,\im\E\,[X_4]+\caO(\Psi^5)\,.
 \end{align}

Since $\E\,[X_3]-1/N=(A_4 - \tau^{-4})\E\,[X_4]+\caO(\Psi^5)$, by the optical theorem~\eqref{le eq lemma optical theorem}, we see that~\eqref{le to be verified}, respectively~\eqref{claim}, indeed hold true. This in turn proves, by the discussion above, the claim in~\eqref{le eq getting rid of the rest}, {\it i.e.,} Lemma~\ref{getting rid of the rest}.

It remains to prove Lemma~\ref{le lemma coeffs}:
\begin{proof}[Proof of Lemma~\ref{le lemma coeffs}]
First, we differentiate the sum rule
$$
\frac{1}{N} \sum_{\alpha} \left( \frac{1}{t_{\alpha}^{-1} - \tau} \right)^2 = \frac{1}{\tau^2}\,,
$$
(see~\eqref{tau 2}) with respect to $t$ to find
$$
\frac{\dot\tau}{\tau^3} = \frac{1}{N} \sum_{\alpha} \frac{\partial_t t_{\alpha}^{-1} - \dot\tau}{(t_{\alpha}^{-1} - \tau)^3} = -\dot\tau A_3 - \gamma^{-1} A_3 + \left( \frac{\dot\gamma}{\gamma} + 1 \right) \frac{1}{N} \sum_{\alpha} \frac{t_{\alpha}^{-1}}{(t_{\alpha}^{-1} - \tau)^3}\,,
$$
which yields
\begin{align}\label{le simplification of the coefs 1}
(A_3 + \tau^{-3}) \dot\tau = \gamma^{-1} [ (\dot\gamma + \gamma) (\tau^{-2} + \tau A_3) - A_3 ]\,.
\end{align}
Using $A_3+\tau^{-3}=1$, we hence get 
\begin{align}\label{le dot tau}
  \dot\tau = \gamma^{-1} [ (\dot\gamma + \gamma)\tau - A_3 ]\,.
\end{align}

Similarly, differentiating the sum rule
$$
\frac{1}{N} \sum_{\alpha} \left( \frac{1}{t_{\alpha}^{-1} - \tau} \right)^3 + \frac{1}{\tau^3} = 1\,,
$$
(see~\eqref{tau 2}) with respect to $t$ we find
\begin{align*}
(A_4 - \tau^{-4}) \dot\tau = \gamma^{-1} [ (\dot\gamma + \gamma) (A_3 + \tau A_4) - A_4 ]\,.
\end{align*}
Combination with~\eqref{le dot tau} yields
$$
(A_4 - \tau^{-4}) [ (\dot\gamma + \gamma) \tau - A_3 ] = (\dot\gamma + \gamma) (A_3 + \tau A_4) - A_4\,,
$$
hence
\begin{align}\label{le simplification of the coefs 2}
{\dot\gamma}+ \gamma =\tau^{-4} A_3 - A_3 A_4 + A_4 = \tau^{-4} A_3 +\tau^{-3} A_4\,.
\end{align}

Thus, we can write the left side of~\eqref{le lemma coeffs 1} as
\begin{align} \label{claim 1 left}
  (\tau^{-4} A_3 + \tau^{-3} A_4) \tau^{-2} +& \big( \tau^{-3} A_3 + \tau^{-2} A_4 - 1 \big) A_3 =  (\tau^{-3} A_3 + \tau^{-2} A_4) - A_3 \nonumber\\
& =  (\tau^{-3} A_3 + \tau^{-2} A_4) - A_3 (A_3 + \tau^{-3}) \nonumber\\
&= (\tau^{-2} A_4 - A_3^2) \,.
\end{align}
This proves~\eqref{le lemma coeffs 1}. Similarly, we have for the left side of~\eqref{le lemma coeffs 2}
\begin{align} \label{claim 1 right}
&  (\tau^{-4} A_3 + \tau^{-3} A_4) A_3 + \big( \tau^{-3} A_3 + \tau^{-2} A_4 - 1 \big) A_4  \nonumber \\
&\qquad=  (\tau^{-4} A_3 + \tau^{-3} A_4) A_3 + \big( \tau^{-3} A_3 + \tau^{-2} A_4 - (A_3 + \tau^{-3})^2 \big) A_4\nonumber \\
&\qquad=  \tau^{-4} A_3^2 + \tau^{-2} A_4^2 - A_3^2 A_4 - \tau^{-6} A_4 \nonumber\\
&\qquad= (A_4 - \tau^{-4}) (\tau^{-2} A_4 - A_3^2) \,.
\end{align}
This proves~\eqref{le lemma coeffs 2} and hence finishes the proof of Lemma~\ref{le lemma coeffs}.
\end{proof}	
Having proven Lemma~\ref{le lemma coeffs}, we can conclude the proof of Lemma~\ref{getting rid of the rest}.
\end{proof}

 \label{appendixIII}
\end{appendix}

\section*{Acknowledgements}

We thank Horng-Tzer Yau for numerous discussions and remarks. We are grateful to Ben Adlam, Jinho Baik, Zhigang Bao, Paul Bourgade, L\'{a}szl\'o Erd\H{o}s and Antti Knowles for comments.

\end{document}